\newcommand{\email}[1]{\href{mailto:#1}{#1}}
\definecolor{darkred}{rgb}{.8,0,0}
\definecolor{tocolor}{rgb}{.1,.1,.1}
\definecolor{urlcolor}{rgb}{.2,.2,.6}
\definecolor{linkcolor}{rgb}{.1,.1,.5}
\definecolor{citecolor}{rgb}{.4,.2,.1}
\definecolor{gray}{rgb}{.8,.8,.8}
\newcommand{\thdef}[2]{
	\newaliascnt{#1}{theorem}
	\newtheorem{#1}[#1]{#2}
	\aliascntresetthe{#1}
	\newtheorem*{#1*}{#2}
	\expandafter\newcommand\expandafter{\csname #1autorefname\endcsname}{#2}
}
\newtheorem{theorem}{Theorem}[section]
\newtheorem*{theorem*}{Theorem}
\theoremstyle{definition}
\theoremstyle{remark}
\newenvironment{example}
  {\pushQED{\qed}\examplex}
  {\popQED\endexamplex}
\newenvironment{remark}
  {\pushQED{\qed}\remarkx}
  {\popQED\endremarkx}
\newcommand{\rbrac}[1]{\left(#1\right)}
\newcommand{\CC}{\mathbb{C}}
\newcommand{\PP}{\mathbb{P}}
\newcommand{\QQ}{\mathbb{Q}}
\newcommand{\T}{\mathsf{T}}
\newcommand{\F}{\mathsf{F}}
\newcommand{\A}{\mathsf{A}}
\newcommand{\Z}{\mathsf{Z}}
\newcommand{\Y}{\mathsf{Y}}
\newcommand{\X}{\mathsf{X}}
\newcommand{\Si}{\mathsf{S}}
\newcommand{\tX}{\widetilde{\mathsf{X}}}
\newcommand{\U}{\mathsf{U}}
\newcommand{\bL}{\mathsf{L}}
\newcommand{\kahcl}{\gamma}
\newcommand{\M}{\mathsf{M}}
\newcommand{\G}{\mathsf{G}}
\newcommand{\N}{\mathsf{N}}
\newcommand{\Aut}[1]{\mathsf{Aut}\rbrac{#1}}
\DeclareMathOperator{\rank}{rank}
\DeclareMathOperator{\img}{img}
\newcommand{\cT}[1]{\mathcal{T}_{#1}} 
\newcommand{\cN}[1]{\mathcal{N}_{#1}} 
\newcommand{\forms}[2][\bullet]{\Omega^{#1}_{#2}} 
\newcommand{\cF}{\mathcal{F}}
\newcommand{\cG}{\mathcal{G}}
\newcommand{\cD}{\mathcal{D}}
\newcommand{\cE}{\mathcal{E}}
\newcommand{\cL}{\mathcal{L}}
\newcommand{\cO}[1]{\mathcal{O}_{#1}} 
\newcommand{\coH}[2][\bullet]{\mathsf{H}^{#1}(#2)}
\newcommand{\coh}[2][\bullet]{\mathsf{h}^{#1}(#2)}
\newcommand{\End}[1]{\mathsf{End}(#1)}
\newcommand{\hook}[1]{\iota_{#1}}
\newcommand{\PoissonVanishingConjecture}{Let $(\X,\pi)$ be a compact K\"{a}hler Poisson manifold, and let $d$ be the minimal dimension of a symplectic leaf of $(\X,\pi)$.  Then the Hodge numbers $h^{2j,0}(\X)$ are nonzero for all $j \leq \tfrac{d}{2}$.  In particular, if $\coh[2,0]{\X} = 0$, then every holomorphic Poisson structure on $\X$ has a zero.}
\newcommand{\PoissonSplittingConjecture}{Let $(\X,\pi)$ be a compact K\"ahler Poisson manifold and suppose that the minimal dimension of a symplectic leaf of $\pi$ is equal to $d$.  Then there exists a holomorphic symplectic manifold $\Y$ of dimension $d$, a holomorphic Poisson manifold $\Z$, and a holomorphic Poisson covering map $\Y\times \Z \to \X$.}
\begin{document}

\title{Numerically flat foliations and holomorphic Poisson geometry}

\author{St\'ephane Druel\thanks{CNRS/Universit\'e Claude Bernard Lyon 1, \email{stephane.druel@math.cnrs.fr}} \and Jorge Vit\'orio Pereira\thanks{IMPA, \email{jvp@impa.br}} \and Brent Pym\thanks{McGill University, \email{brent.pym@mcgill.ca}} \and Fr\'ed\'eric
Touzet\thanks{Universit\'e de Rennes , \email{frederic.touzet@univ-rennes.fr}}}

\maketitle

{\centering\textit{To Jean-Pierre Demailly, in memoriam.} \par}

\begin{abstract}
    We investigate the structure of smooth holomorphic foliations with numerically flat tangent bundles on compact K\"ahler manifolds. Extending earlier results on non-uniruled projective manifolds by the second and fourth authors, we show that such foliations induce a decomposition of the tangent bundle of the ambient manifold, have leaves uniformized by Euclidean spaces, and have torsion canonical bundle. Additionally, we prove that  smooth two-dimensional foliations with numerically trivial canonical bundle on projective manifolds are either isotrivial fibrations or have numerically flat tangent bundles.  This in turn implies a global Weinstein splitting theorem for rank-two Poisson structures on projective manifolds.  We also derive new Hodge-theoretic conditions for the existence of zeros of Poisson structures on compact K\"ahler manifolds.
\end{abstract}

\renewcommand{\abstractname}{R\'esum\'e}
\begin{abstract} Dans cet article, nous nous int\'eressons à la structure des feuilletages holomorphes r\'eguliers  dont le fibr\'e tangent est num\'eriquement plat, la vari\'et\'e ambiante \'etant compacte k\"ahlérienne. Nous \'etendons dans ce cadre des r\'esultats pr\'ec\'edemment obtenus par les deuxième et quatrième auteurs. Nous montrons notamment que l'existence d'un tel feuilletage induit une d\'ecomposition du fibr\'e tangent de la vari\'et\'e, que les feuilles sont uniformis\'ees par un espace euclidien et que le fibr\'e canonique dudit feuilletage est de torsion. En outre, nous \'etablissons, lorsque la vari\'et\'e ambiante est suppos\'ee projective,  qu'un feuilletage r\'egulier de dimension deux dont le fibr\'e canonique est num\'eriquement trivial  est ou bien  une fibration isotriviale, ou bien possède un fibr\'e tangent num\'eriquement plat. Ce dernier r\'esultat fournit un analogue global du th\'eor\`eme de d\'ecomposition de Weinstein pour les structures de Poisson de rang deux sur les vari\'et\'es projectives lisses. Nous obtenons également de nouvelles conditions sur les nombres de Hodge pour qu'une structure de Poisson sur une variété compacte k\"ahlérienne s'annule en un point.
\end{abstract}

\section{Introduction}

In this paper, we establish several results and conjectures concerning the structure of holomorphic foliations $\cF$ on compact K\"ahler manifolds $\X$, under suitable cohomological vanishing conditions on the curvature of the tangent sheaf $\cT{\cF}$. Our main results give conditions for such foliations to be induced by a splitting of the universal cover of $\X$ into a product of manifolds.  As an application, we obtain some interesting consequences for the structure of holomorphic Poisson brackets on compact K\"ahler manifolds.

\subsection{Numerically flat foliations}
Our first main result, established in \autoref{S:proof main}, describes the structure of regular foliations whose tangent bundle is numerically flat in the sense of Demailly--Peternell--Schneider~\cite{DPS94}, a notion we recall in \autoref{sec:num-flat}.

Such foliations were previously analyzed by the second and fourth authors in \cite{Pereiratouzet2013}, under the additional assumptions that the ambient manifold $\X$ is projective but not uniruled.  Remarkably, with these additional assumptions,  the vanishing of the first Chern class of the foliation  implies both the smoothness of the foliation and the polystability of its tangent bundle. This is established in \cite[Lemma 2.1]{Pereiratouzet2013} (see also \cite[Section 5]{Loray2018}), building on Demailly's integrability theorem for differential forms with coefficients in the dual of a pseudoeffective line bundle \cite{Demailly02} and the characterization of non-uniruled projective manifolds by Boucksom--Demailly--P\u{a}un--Peternell \cite{BDPP}. Leveraging these two properties, it is further shown (\textit{loc.\ cit.}) that the canonical bundle of the foliation is a torsion line bundle, its leaves are uniformized by Euclidean spaces, and their analytic closures are quotients of abelian varieties. Similarly, and almost concurrently with \cite{Pereiratouzet2013}, the work \cite{Amoros} provides a precise description of smooth foliations with trivial tangent bundle on arbitrary compact K\"ahler manifolds, extending previous work by Lieberman \cite{Lieberman78}.

In this paper, we extend the results from \cite{Pereiratouzet2013} to smooth foliations with numerically flat tangent bundle on arbitrary compact K\"ahler manifolds.

\begin{theorem}\label{thm:num_flat_new_main}
    Let $\X$ be a compact K\"{a}hler manifold, and let $\cF$ be a regular foliation of dimension $p\ge 0$ on $\X$ with numerically flat tangent bundle $\cT{\cF}$. Then the following hold.
    \begin{enumerate}
        \item The tangent bundle of $\cF$ is hermitian flat, and the line bundle $\det \cT{\cF}$ is torsion.
        \item There exists a foliation $\cG$ on $\X$  such that $\cT{\X} = \cT{\cF} \oplus \cT{\cG}$.
        \item The universal cover $\tX$ of $\X$ decomposes as a product $\tX \cong \CC^p \times \Y$ where $\Y$ is a complex manifold, and the decomposition $\cT{\X}=\cT{\cF}\oplus \cT{\cG}$ lifts to the canonical decomposition $\cT{\widetilde{\X}} \cong  \cT{\CC^p} \boxplus\cT{\Y}$.
        \item The analytic closure $\overline{\bL}\subseteq \X$ of any leaf $\bL$ of $\cF$ is isomorphic to a finite \'etale quotient of an equivariant compactification of an abelian Lie group $\G_{\bL}$, in such a way that the foliation $\cF|_{\overline{\bL}}$ is induced by a (not necessarily closed) subgroup of $\G_{\bL}$.
    \end{enumerate}
\end{theorem}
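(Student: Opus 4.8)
The plan is to derive all four assertions from the Demailly--Peternell--Schneider structure theory of numerically flat bundles, using the involutivity of $\cT{\cF}$ as the decisive extra input. The starting point is the DPS theorem: on a compact K\"ahler manifold a numerically flat bundle admits a filtration by holomorphic subbundles whose successive quotients are hermitian flat, so in particular every Chern class of $\cT{\cF}$ vanishes and $\cT{\cF}$ is $\omega$-semistable of slope zero for every K\"ahler class $\omega$. The nontrivial content of part~(1) is to promote this to genuine hermitian flatness of $\cT{\cF}$ itself, since the extensions occurring in the DPS filtration need not split a priori. Here I would use involutivity to establish polystability of $\cT{\cF}$, and then invoke the L\"ubke--Kobayashi vanishing: a Hermite--Einstein bundle (obtained from polystability by the K\"ahler Donaldson--Uhlenbeck--Yau correspondence) with vanishing first and second Chern classes carries a flat metric with unitary holonomy. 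Once $\cT{\cF}$ is hermitian flat, $\det\cT{\cF}$ is a flat unitary line bundle with $c_1=0$; its torsion will be deduced at the end, once the leaf structure shows that $\cT{\cF}$ trivializes on suitable finite covers so that $\det\cT{\cF}$ is pulled back from a point of finite order.

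For part~(2), I would split the conormal sequence $0\to\cT{\cF}\to\cT{\X}\to\cN{\cF}\to0$ using the hermitian flat structure. The flat metric on $\cT{\cF}$ supplies a smooth orthogonal complement, and the point is to show this complement is a holomorphic subbundle, equivalently that the foliation $\cF$ is totally geodesic for an adapted K\"ahler metric so that the extension class in $\coH[1]{\X,\sHom{\cN{\cF},\cT{\cF}}}$ vanishes. I expect this to be forced by the polystability of $\cT{\cF}$ together with the flatness of the Bott partial connection along the leaves. The resulting complement $\cG$ is then automatically involutive because a holomorphic complement to a flat, totally geodesic foliation is integrable.

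For part~(3), with the holomorphic splitting $\cT{\X}=\cT{\cF}\oplus\cT{\cG}$ in hand and $\cT{\cF}$ flat and totally geodesic, I would apply a holomorphic de~Rham / local-product decomposition: the holonomy of the adapted K\"ahler metric reduces, so the universal cover $\tX$ splits isometrically and holomorphically as a product. The factor corresponding to $\cT{\cF}$ is a simply connected complete K\"ahler manifold carrying a flat metric, hence biholomorphic and isometric to $\CC^p$, and the remaining factor is $\Y$; by construction the splitting of $\cT{\X}$ lifts to $\cT{\CC^p}\boxplus\cT{\Y}$.

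Finally, for part~(4), each leaf of $\cF$ is the image of a $\CC^p$-factor, and the translations of $\CC^p$ descend to a holomorphic action exhibiting $\cF$ as a linear foliation near each leaf. I would analyze the closure $\overline{\bL}$ through the connected automorphism group $\Aut{\X}^{\circ}$: following Lieberman's theory of $\Aut{\X}^{\circ}$ and the description in the trivial-tangent-bundle case, the orbit closure of the abelian complex Lie group generated by these leafwise translations is an equivariant compactification of a quotient abelian Lie group $\G_{\bL}$, with $\overline{\bL}$ a finite \'etale quotient thereof and $\cF|_{\overline{\bL}}$ induced by a subgroup of $\G_{\bL}$; the torsion of $\det\cT{\cF}$ then follows since it pulls back to a trivial bundle on these tori. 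The step I expect to be the main obstacle is the polystability in part~(1): in the general K\"ahler (possibly non-projective or uniruled) setting the projective arguments of Pereira--Touzet via \cite{BDPP} and Demailly's integrability theorem are unavailable, so polystability of $\cT{\cF}$ must be extracted directly from its involutivity together with the DPS filtration.
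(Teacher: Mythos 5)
There is a genuine gap, and it sits exactly where you flag it yourself: the passage from numerical flatness plus involutivity to polystability of $\cT{\cF}$ in part~(1). You propose DPS filtration $\Rightarrow$ polystability $\Rightarrow$ Donaldson--Uhlenbeck--Yau/L\"ubke $\Rightarrow$ hermitian flatness, but no argument is given for the middle implication, and it is not a technical footnote --- it is the entire content of the theorem. Numerical flatness alone only yields semistability of slope zero: non-split extensions of hermitian flat bundles (e.g.\ the Atiyah extension on an elliptic curve, cf.\ \cite[Example 1.7]{DPS94}) are numerically flat but not hermitian flat, so involutivity must intervene in an essential way, and your proposal offers no mechanism for it. Notably, the paper never establishes polystability at all. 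Instead it takes a \emph{maximal hermitian flat subbundle} $\cT{1}\subseteq\cT{\cF}$ (nonzero by \autoref{thm:numerically_flat}) and proves $\cT{1}=\cT{\cF}$ by the case analysis of \autoref{L:numerically_flat_versus_hermitian_flat}: Lieberman's theory of automorphism groups and the Albanese map when $\cT{1}$ is trivial, Selberg's lemma and the Shafarevich morphism of Campana--Claudon--Eyssidieux together with the fact that varieties of general type admit no covering families of entire curves when the semisimplified monodromy is infinite, and induction on $\dim\X$ --- all interleaved with the splitting statement of part~(2) applied to $\cT{1}$ (\autoref{C:existence_complementary_hermitian_flat}). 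In particular, in the paper the splitting~(2) is an \emph{input} to the proof of~(1), not a consequence of it, so your linear order (1) then (2) cannot be repaired simply by importing the paper's lemmas.

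The secondary steps have the same character. For part~(2) you assert that the smooth orthogonal complement of the flat subbundle is holomorphic, ``forced by polystability together with flatness of the Bott partial connection,'' but the extension class in $\coH[1]{\X,\sHom{\cN{\cF},\cT{\cF}}}$ has no general reason to vanish for a hermitian flat subbundle of $\cT{\X}$; the paper's complement comes from a different construction (\autoref{prop:main_lemma}): Hodge symmetry for the unitary local system $\omega_\cF$ turns the class $\hook{v}\kahcl^p$ into a twisted holomorphic $p$-form $\alpha$, the Hodge--Riemann relations show $\hook{v}\alpha$ is a nonzero constant (this is where the cohomologically K\"ahler condition --- the behaviour of the K\"ahler class along the leaves --- does the work that you hope stability would do), and $\cT{\cG}$ is the kernel of contraction with $\alpha$. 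Likewise, torsion of $\det\cT{\cF}$ does not follow from trivializing $\cT{\cF}$ on leaf closures: those are subvarieties of $\X$, not \'etale covers of it, so restriction to them controls nothing about $\det\cT{\cF}$ globally; the paper needs Wang's theorem on cohomology jump loci (Green--Lazarsfeld sets) for this point. In short, your proposal assembles the right toolbox (DPS filtration, flat unitary structures, Lieberman theory) but the load-bearing implication --- involutivity forcing flatness --- is missing, and the paper's actual route shows it requires machinery (Shafarevich maps, hyperbolicity, the cohomologically K\"ahler splitting) of a different nature than stability theory.
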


In the case of uniruled manifolds, smoothness does not follow automatically from the triviality of the first Chern class.  For example, one-dimensional foliations with trivial first Chern class on simply connected manifolds are necessarily singular, presenting  a significant obstacle to adapting the arguments of \cite{Pereiratouzet2013} to our current setting.  We get around this using an alternative approach to constructing splittings of the tangent bundle that exploits the behaviour of the K\"ahler class along the leaves of the foliation; see \autoref{sec:intro-cohK} below.

\subsection{Foliations with numerically trivial canonical bunlde}

Our second main result, established in \autoref{sec:dim2-CY}, describes the structure of smooth foliations of dimension two with numerically trivial canonical bundle on projective manifolds. For projective threefolds, these were described  using Mori Theory in \cite{DruelPoisson}. Later,  a similar description was obtained for smooth codimension one foliations on compact K\"ahler manifolds using entirely different methods in \cite{CalabiYauTouzet}. In this work, we establish the following result.

\begin{theorem}\label{thm:second_main}
Let $\X$ be a complex projective manifold, and let $\cF$ be a regular foliation of dimension two with $c_1(\cT{\cF})=0$. Then the canonical bundle of $\cF$ is a torsion line bundle. Moreover, either  $\cT{\cF}$ is hermitian flat, or $\X$ has a finite \'etale cover that decomposes as a product $\bL\times\Y$ where $\bL$ is a surface with zero first Chern class, $\Y$ is a complex projective manifold, and the foliation is induced by the projection to $\Y$.
\end{theorem}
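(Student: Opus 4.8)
The plan is to decide the dichotomy according to whether the rank-two bundle $\cT{\cF}$ is numerically flat. Since $c_1(\cT{\cF})=0$, numerical flatness is equivalent to $\cT{\cF}$ being semistable with respect to movable curve classes together with the vanishing $c_2(\cT{\cF})\cdot\alpha=0$. If $\cT{\cF}$ is numerically flat, then \autoref{thm:num_flat_new_main} applies directly: part (1) gives that $\cT{\cF}$ is hermitian flat and that $\det\cT{\cF}$, hence $\can{\cF}$, is torsion, placing us in the first alternative. The entire content is therefore to show that if $\cT{\cF}$ fails to be numerically flat, then $\cF$ is an isotrivial fibration by surfaces with numerically trivial canonical class.

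Assume then that $\cT{\cF}$ is not numerically flat. I would first rule out instability. Fixing a movable class $\alpha$ and considering the Harder--Narasimhan filtration of $\cT{\cF}$, if $\cT{\cF}$ were $\alpha$-unstable then, as $c_1(\cT{\cF})=0$, its maximal destabilizing subsheaf would be a line bundle $\cL\subset\cT{\cF}$ of positive $\alpha$-slope; being saturated it is a subfoliation by curves, and by the positivity theorem of Bogomolov--McQuillan and Campana--P\u{a}un it is algebraically integrable with rational leaves. Thus $\X$ would be covered by a family of rational curves $C$ tangent to $\cF$, along which $\cT{\cF}|_C\cong\cO{C}(a)\oplus\cO{C}(-a)$ with $a\ge 2$, so that the quotient $\cT{\cF}/\cL$ restricts to the negative bundle $\cO{C}(-a)$. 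I expect to derive a contradiction from the coexistence of a covering family of free rational curves with a negative quotient direction of $\cT{\cF}$ inside the foliation, using the regularity (smoothness) of $\cF$ and the Bott partial connection on $\cT{\cF}/\cL$. Consequently $\cT{\cF}$ is $\alpha$-semistable for every movable $\alpha$, while $c_2(\cT{\cF})\cdot\alpha>0$ by the Bogomolov inequality.

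In the remaining case $\cT{\cF}$ is semistable with $c_2\neq 0$, and the key step is to prove that $\cF$ is algebraically integrable with compact leaves. The positivity $c_2(\cT{\cF})\cdot\alpha>0$ shows that the uniformization of \autoref{thm:num_flat_new_main} cannot hold along the leaves, which heuristically forces them to be algebraic; equivalently, a trivialization of $\det\cT{\cF}$ (available after a finite \'etale cover) gives a rank-two holomorphic Poisson bivector whose symplectic leaves are the leaves of $\cF$, and the statement becomes a global Weinstein splitting. I would attack compactness by bounding the general leaf using the positivity of $c_2$ together with the transverse structure of the regular foliation, showing that the closure of the general leaf is a projective surface $\bL$ of Kodaira dimension zero. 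Granting algebraic integrability, the regularity of $\cF$ makes the leaves the fibers of a smooth proper fibration $f\colon\X\to\Y$ onto a projective manifold, with $\cF$ induced by $f$; surface classification then gives that $\can{\cF}$ is torsion. For isotriviality I would use that $c_1(\cT{\cF})=0$ makes the relative tangent bundle have trivial determinant along the base, and combine this with Viehweg-type positivity of direct images (or the rigidity of families of surfaces of Kodaira dimension zero) to force the moduli map of $f$ to be constant; an isotrivial family is an \'etale-locally trivial $\bL$-bundle with structure group in $\Aut{\bL}$, and a finite \'etale base change trivializes the monodromy, producing a finite \'etale cover of $\X$ isomorphic to $\bL\times\Y$ with $\cF$ induced by the projection to $\Y$.

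The main obstacle is the algebraic integrability step: precisely because $\cT{\cF}$ is semistable there is no destabilizing subsheaf to feed into the Bogomolov--McQuillan machinery, so compactness of the leaves must be extracted by other means, presumably by leveraging the strict Bogomolov inequality and the Poisson/transverse structure rather than any positivity of a subfoliation. A second delicate point is ruling out the unstable case against the regularity of $\cF$; by contrast I expect the isotriviality and the passage to a product after \'etale cover to be comparatively routine once the fibration is in hand.
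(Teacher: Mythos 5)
Your treatment of the numerically flat alternative is correct and matches the paper: in that case \autoref{thm:num_flat_main} gives hermitian flatness of $\cT{\cF}$ and torsion of $\omega_{\cF}$. The problem is that the entire content of the theorem is the other implication --- that failure of numerical flatness forces algebraic integrability --- and your proposal contains no argument for it; you yourself flag it as ``the main obstacle'' and leave it open. The difficulty is genuine: once $\cT{\cF}$ is semistable there is no positive-slope subsheaf to feed into Bogomolov--McQuillan/Campana--P\u{a}un, and strict positivity of $c_2(\cT{\cF})\cdot\alpha$ is not the input to any algebraicity criterion; neither the Bogomolov inequality nor the ``Poisson/transverse structure'' produces algebraic, let alone compact, leaves. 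Your exclusion of the unstable case is likewise only a hope as written (``I expect to derive a contradiction''), though that step is fixable: a saturated line subsheaf of $\cT{\cF}$ with positive movable slope is algebraically integrable with rational leaves by Campana--P\u{a}un, so $\cF$ would be uniruled, contradicting \cite[Corollary 3.8]{Loray2018} for a regular foliation with $c_1(\cT{\cF})=0$ (this is exactly how non-uniruledness is used in the paper's proof of \autoref{theorem:classification_small_rank}).

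The paper closes the central gap by running the dichotomy on a different invariant: not ``numerically flat or not,'' but whether the tautological class $\xi$ of $\PP(\forms[1]{\cF})$ is pseudo-effective. If $\xi$ is \emph{not} pseudo-effective, then $\cF$ is algebraically integrable by \cite[Proposition 8.4]{Druel18} --- this is precisely the algebraicity criterion that needs no destabilizing subsheaf and is the missing tool in your plan. If $\xi$ \emph{is} pseudo-effective, then $\cT{\cF}$ is semistable by \cite[Theorem 4.7]{CP19}, and Nakayama's classification \cite[Theorem IV.4.8]{Nakayama04} of semistable rank-two bundles with $c_1=0$ leaves four cases, each of which is shown to yield numerical flatness: two follow from pseudo-effectivity of the dual line bundles \cite[Theorem 1.2]{CP19}, while the properly semistable case of a saturated line subsheaf $\cT{\mathcal{L}}\subset\cT{\cF}$ with $c_1(\cT{\mathcal{L}})=0$ requires a separate theorem (\autoref{abundance_foliation_by_cuvres}, abundance for foliations by curves with log canonical singularities) together with Lieberman's results on automorphism groups and an analysis of minimal $\cF$-invariant subvarieties. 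Note that algebraic integrability is \emph{concluded} from non-pseudo-effectivity of $\xi$, not from semistability plus $c_2\neq 0$; your framing cannot shortcut this, because semistability with respect to an ample class does not by itself decide whether $\xi$ is pseudo-effective. Finally, in the algebraically integrable case the paper does not re-derive isotriviality via Viehweg-type positivity: the product decomposition after a finite \'etale cover is quoted directly from \cite[Theorem 1.4 and Remark 3.17]{globalweinstein}.
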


Unlike in \autoref{thm:num_flat_new_main}, here we are forced to restrict to projective manifolds due to the lack of compact K\"ahler analogues
of the algebraicity/compactness criteria for leaves used in the proof of \autoref{thm:second_main}.

\subsection{Cohomologically K\"ahler foliations and splittings}
\label{sec:intro-cohK}
A recurring theme in our arguments, and in previous works such as \cite{Druel2020,globalweinstein,Loray2018}, is the behaviour of the restriction of the K\"ahler class to the leaves of the foliation.  We have found it useful to isolate the key property in the following definition, which we introduce and develop in \autoref{S:splitting}.
\begin{definition}
Let $\cF$ be a foliation on a compact K\"ahler manifold $\X$, and let $p=\dim\cF$.  We say that $\cF$ is \emph{cohomologically K\"ahler} if for every K\"ahler class $\kahcl \in \coH[1]{\X,\forms[1]{\X}}$, the image of $\kahcl^p$ in $\coH[p]{\X,\omega_\cF}$ is nonzero, where $\omega_\cF = (\wedge^p \cT{\cF})^*$ is the canonical line bundle of $\cF$.
\end{definition}
This condition is readily checked in many cases, e.g.~it is stable along \'etale covers and embeddings, and holds automatically for codimension one foliations; the most subtle results we obtain in this direction rely on Demailly's integrability theorem~\cite{Demailly02}. In fact, we conjecture that this condition holds for all regular foliations on compact K\"ahler manifolds (\autoref{conj:regular-foliation-cK}). Meanwhile, this condition is closely linked to the existence of a subbundle of $\cT{\X}$ that is complementary to $\cT{\cF}$.  In particular, the existence of such a complement easily implies that the foliation is cohomologically K\"ahler (\autoref{lem:direct-sum-cK}), and the converse holds if $c_1(\cT{\cF})=0$ (\autoref{prop:main_lemma}).

\subsection{Applications to Poisson geometry}

In \autoref{sec:poisson}, we explain some consequences of our results and conjectures for holomorphic Poisson structures. On the one hand, they suggest the following global version of  Weinstein's splitting theorem~\cite{Weinstein1983}, generalizing our results in \cite{globalweinstein} (which treated the case of possibly singular Poisson structures with a simply-connected compact leaf):

\begin{conjecture}\label{conj:weinstein}
\PoissonSplittingConjecture
\end{conjecture}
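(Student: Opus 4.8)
The plan is to deduce the conjecture from the foliation structure theory of \autoref{thm:num_flat_new_main}, using Weinstein's local splitting theorem \cite{Weinstein1983} to produce the relevant foliation. Write $\pis\colon\forms[1]{\X}\to\cT{\X}$ for the anchor of $\pi$, so that the symplectic leaves are the integral manifolds of the singular distribution $\img\pis$, and let $\bL$ be a symplectic leaf of minimal dimension $d$. The cleanest case is when $\pi$ has constant rank $d$: then $\img\pis$ is a regular foliation $\cF$ of dimension $d$, the leafwise holomorphic symplectic form $\pi^{-1}$ trivializes $\det\cT{\cF}$, and hence $c_1(\cT{\cF})=0$ automatically. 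In the general case, Weinstein's theorem writes $\pi$ near any point of $\bL$ as the product of a symplectic structure of dimension $d$ and a transverse Poisson structure vanishing at that point (minimality of $d$ forcing the transverse rank to vanish there), and the first task is to promote these local $d$-dimensional symplectic directions to a single global regular foliation $\cF$ of dimension $d$ with $c_1(\cT{\cF})=0$ that is compatible with $\pi$.

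Granting such an $\cF$, the remainder is comparatively formal. I would first establish that $\cF$ is cohomologically K\"ahler and that $\cT{\cF}$ is numerically flat: the leafwise symplectic form identifies $\cT{\cF}$ with its dual $\cT[\cF]^{*}$, reducing numerical flatness to nefness of $\cT{\cF}$ alone, and the image of $\kahcl^{d}$ in $\coH[d]{\X,\omega_\cF}$ can be controlled through Demailly's integrability theorem \cite{Demailly02} much as in the proof of \autoref{prop:main_lemma}. With $\cT{\cF}$ numerically flat, \autoref{thm:num_flat_new_main} applies and yields both a splitting $\cT{\X}=\cT{\cF}\oplus\cT{\cG}$ and a decomposition of the universal cover $\tX\cong\CC^{d}\times\Y$, with $\cT{\tX}\cong\cT{\CC^{d}}\boxplus\cT{\Y}$, under which the leaves of $\cF$ are the slices $\CC^{d}\times\{\mathrm{pt}\}$. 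Pulling $\pi$ back to $\tX$, the parallelism of the leafwise symplectic form for the flat structure on the $\CC^{d}$-factor forces it to be translation-invariant, so that $\pi$ becomes a genuine product of a constant holomorphic symplectic structure on $\CC^{d}$ and a transverse Poisson structure on $\Y$; taking $\Y_{\mathrm{sympl}}=\CC^{d}$ (or a translation quotient thereof, by part~(4) of \autoref{thm:num_flat_new_main}) and $\Z=\Y$ produces the desired Poisson covering map $\Y_{\mathrm{sympl}}\times\Z\to\X$.

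The main obstacle is the first step, and it is precisely what keeps the statement conjectural. When $\pi$ does not have constant rank, the extra symplectic directions at higher-rank points lie outside the minimal distribution, so that $\cF$ is neither visibly regular nor $\pi$-invariant, and the local Weinstein decompositions around different minimal leaves need not patch into a global product. In the constant-rank case these difficulties disappear and the argument above runs essentially unconditionally. The genuinely new contribution here is that for $d=2$ the gap can be closed on projective manifolds: a rank-two Poisson bivector with minimal leaf dimension two is nowhere vanishing and therefore defines a regular two-dimensional foliation with $c_1(\cT{\cF})=0$ directly, so that \autoref{thm:second_main} supplies the dichotomy between the hermitian-flat case (finished by \autoref{thm:num_flat_new_main} as above) and the isotrivial-fibration case (in which a symplectic surface splits off after a finite \'etale cover), recovering the simply-connected compact leaf results of \cite{globalweinstein} as a special instance. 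Extending the dichotomy of \autoref{thm:second_main} to regular foliations of arbitrary even dimension, and removing the regularity hypothesis in the presence of higher-rank points, are the essential difficulties that remain.
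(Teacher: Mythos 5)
The statement you set out to prove is a conjecture, and the paper does not prove it in full either: what the paper gives is (a) a conditional reduction to \autoref{conj:regular-foliation-cK} plus Beauville's conjecture, and (b) the unconditional case $d=2$ with $\X$ projective (\autoref{prop:weinstein}). Measured against that, your proposal is missing the paper's central idea, and one of your intermediate steps fails outright. The missing idea is the \emph{subcalibration} of Frejlich--M\u{a}rcu\c{t}: rather than trying to globalize Weinstein's local splitting --- the obstacle you correctly identify as fatal --- the paper constructs a global closed holomorphic two-form $\sigma$ with $\theta=\pi^\sharp\sigma^\flat$ idempotent. This is done in \autoref{prop:subcal} by Hodge theory: take a minimal Poisson subvariety $\Y$ (automatically smooth, with $\pi|_\Y$ regular of rank $d$), conjugate $\hook{\pi^{d/2}}\kahcl^{d}$ under Hodge symmetry to a holomorphic $d$-form $\mu$, normalize it using \autoref{lem:nonzero-contraction} (this is where cohomological K\"ahlerness of the symplectic foliation of $\Y$ enters --- one of only two conjectural inputs, the other being Beauville's conjecture used at the very end to split the universal cover), set $\sigma_0=\tfrac{1}{(d/2-1)!}\hook{\pi^{d/2-1}}\mu$, and correct it via \cite[Lemma 2.4]{globalweinstein}. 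The image of $\theta$ is then a global regular symplectic foliation $\cF$ of dimension $d$ on all of $\X$, with $\pi=\pi_\cF+\pi_\cG$, valid even where $\pi$ has rank strictly greater than $d$: the idempotent splits the higher-rank symplectic directions between $\cF$ and $\cG$, which is exactly the patching problem your Weinstein-based approach cannot solve.

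The step of yours that fails is ``I would first establish that \dots\ $\cT{\cF}$ is numerically flat.'' That is false for symplectic foliations in general: in the isotrivial-fibration branch of \autoref{thm:second_main} the leaves are compact surfaces with trivial canonical class, e.g.\ K3 surfaces, whose tangent bundles are not nef, so no argument can establish numerical flatness, and your route through \autoref{thm:num_flat_new_main} collapses precisely there. The paper instead passes from cohomological K\"ahlerness and $c_1(\cT{\cF})=0$ to the tangent-bundle splitting via \autoref{prop:main_lemma}, claiming no flatness, and then invokes Beauville's conjecture (not \autoref{thm:num_flat_new_main}) to split the universal cover. A smaller but real error: your $d=2$ discussion conflates ``rank-two Poisson structure'' with ``minimal leaf dimension two.'' If the generic rank of $\pi$ exceeds two while the minimal leaf has dimension two, then $\img\pis$ is not a regular rank-two distribution and your ``directly defined'' foliation does not exist; \autoref{prop:weinstein} covers exactly this situation, again by first producing the two-dimensional foliation through a subcalibration (using \autoref{thm:second_main} and \autoref{lem:direct-sum-cK} on a minimal Poisson submanifold to verify the cohomologically K\"ahler hypothesis) and only then applying \autoref{thm:second_main} on $\X$ to obtain the splitting.
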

Note that in the conjecture, the covering map is allowed to be infinite, i.e.~$\Y$ or $\Z$ may be noncompact.

As we explain in and around \autoref{prop:weinstein}, this conjecture was previously known to hold when the corank of $\pi$ is at most one~\cite{DruelPoisson,CalabiYauTouzet}, and our \autoref{thm:second_main} above implies that it also holds when the minimal dimension of a leaf is equal to two, provided $\X$ is projective.  In summary, these results give the following.

\begin{theorem}
\autoref{conj:weinstein} holds for all projective manifolds of dimension  $\dim \X \leq 5$.
\end{theorem}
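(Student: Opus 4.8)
The plan is to deduce the statement from the two regimes in which \autoref{conj:weinstein} is already available — generic corank at most one, by \cite{DruelPoisson,CalabiYauTouzet}, and minimal leaf dimension equal to two, by the consequence of \autoref{thm:second_main} recorded around \autoref{prop:weinstein} — together with the trivial case of a vanishing point. Writing $n=\dim\X$ and letting $d$ be the minimal dimension of a symplectic leaf of $\pi$, I would argue by cases on $d$. Since symplectic leaves have even dimension and $d\le n\le 5$, the only possibilities are $d\in\{0,2,4\}$, and each value will be matched to one of these inputs.

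First, if $d=0$ then $\pi$ vanishes at a point, and one takes $\Y$ to be a point, $\Z=\X$, and the identity map; the splitting is immediate. Next, if $d=2$, then \autoref{conj:weinstein} holds by the consequence of \autoref{thm:second_main}; this is the only place the projectivity hypothesis is used, since \autoref{thm:second_main} is stated for projective $\X$. The decisive case is $d=4$, where the bound $n\le 5$ is what makes everything work: writing $2r$ for the generic rank of $\pi$, a leaf of dimension $4$ forces $2r\ge 4$, while $2r\le n\le 5$ with $2r$ even gives $2r=4$. Thus the minimal and maximal ranks of $\pi$ coincide, so $\pi$ has constant rank $4$; it is therefore a regular Poisson structure of corank $n-4\le 1$, and the conjecture follows from \cite{DruelPoisson,CalabiYauTouzet}. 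As $d\ge 6$ is impossible for $n\le 5$, this exhausts all cases.

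The heart of the matter — and essentially the only argument specific to this corollary — is that these cases are exhaustive precisely under the bound $n\le 5$. The key numerical point is that for $d=4$ the inequalities $4\le 2r\le n\le 5$ pin down $2r=4$ and hence force $\pi$ to be regular of corank at most one, so that the corank-at-most-one results apply verbatim. I anticipate no real obstacle in the packaging itself, since the substantive work lives entirely in the two cited inputs; the one thing worth flagging is that this mechanism fails starting at $n=6$, where a regular Poisson structure of rank $4$ and corank $2$ has minimal leaf dimension $4$ and escapes both regimes, signalling exactly why the statement is confined to $\dim\X\le 5$.
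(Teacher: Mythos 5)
Your proposal is correct and takes essentially the same route as the paper: the implicit proof there is exactly the case split $d=0$ (trivial), $d=2$ (\autoref{prop:weinstein}, the only place projectivity is used), and $d=4$, where evenness of the rank together with $\dim\X\le 5$ forces $\pi$ to be regular of corank $\dim\X-4\le 1$, so the classification of \cite{DruelPoisson,CalabiYauTouzet} applies. The only nit is that in your $d=4$ case with $\dim\X=4$ the corank is zero, i.e.\ $\X$ itself is holomorphic symplectic; this subcase is trivially split (take $\Y=\X$ and $\Z$ a point, as the paper does for $d=\dim\X$) rather than being covered by the cited corank-one classifications.
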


On the other hand, we conjecture a tight relationship between the Hodge numbers of $\X$ and the existence of zeros of the Poisson structure, similar to Bondal's conjecture~\cite{Bondal1993} on the dimensions of degeneracy loci of Poisson structures on Fano manifolds.

\begin{conjecture}\label{conj:poisson-vanishing}
\PoissonVanishingConjecture
\end{conjecture}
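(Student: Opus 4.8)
The plan is to manufacture a nonzero holomorphic $2j$-form directly out of the Poisson tensor, for each $j \le \tfrac{d}{2}$. Write $k = \tfrac{d}{2}$, which is an integer since the symplectic leaves of a holomorphic Poisson structure are holomorphically symplectic, and fix $j\le k$. Because the rank of $\pi$ at every point is at least $d \ge 2j$, the holomorphic multivector $\Theta := \pi^{\wedge j}\in\coH[0]{\X,\wedge^{2j}\cT{\X}}$ is nowhere vanishing. Contraction against $\Theta$ then defines a surjective morphism of vector bundles $\hook{\Theta}\colon \forms[2j]{\X} \to \cO{\X}$, since at each point it is a nonzero linear functional (the pairing between $\wedge^{2j}\forms[1]{\X}$ and $\wedge^{2j}\cT{\X}$ is perfect). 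By Hodge symmetry on the compact K\"ahler manifold $\X$ we have $\coh[2j,0]{\X} = \coh[0,2j]{\X} = \dim\coH[2j]{\X,\cO{\X}}$, so the whole statement reduces to proving $\coH[2j]{\X,\cO{\X}} \neq 0$; the final ``in particular'' then follows by taking $j=1$ and contraposing (if $\coh[2,0]{\X}=0$ we cannot have $d\ge 2$, so $d=0$ and $\pi$ vanishes somewhere).

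Next I would apply $\coH[2j]{-}$ to $\hook{\Theta}$ to obtain a linear map $\coH[2j]{\X,\forms[2j]{\X}} \to \coH[2j]{\X,\cO{\X}}$, and feed it a K\"ahler class. Fix $\kahcl\in\coH[1]{\X,\forms[1]{\X}}$; its power $\kahcl^{2j}\in\coH[2j]{\X,\forms[2j]{\X}}$ is nonzero by Hard Lefschetz (its top power is the volume class). The crux of the proof is then the claim that the image of $\kahcl^{2j}$ under $\hook{\Theta}$ is nonzero in $\coH[2j]{\X,\cO{\X}}$. This is exactly a cohomologically K\"ahler condition, transported from foliations to the multivector $\Theta$: when $\pi$ is regular of rank $2j=d$, the bivector power $\Theta$ is decomposable and trivializes $\det\cT{\cF}$ for the symplectic foliation $\cF$, the map $\hook{\Theta}$ factors as the leafwise restriction $\forms[2j]{\X}\to\omega_{\cF}$ followed by this trivialization, and the desired nonvanishing is precisely the assertion that $\cF$ is cohomologically K\"ahler (\autoref{conj:regular-foliation-cK}). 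Since $c_1(\cT{\cF})=0$ here, by \autoref{prop:main_lemma} this is in turn equivalent to the existence of a complement to $\cT{\cF}$, which is the splitting underlying \autoref{conj:weinstein}; thus the three conjectures reinforce one another. The geometric intuition is transparent: $\kahcl$ restricts to a K\"ahler class on each leaf, whose top power is a strictly positive volume, so the pairing should not die in cohomology.

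The hard part will be twofold, and is exactly why the statement is conjectural. First, even in the regular case the cohomologically K\"ahler property for foliations is itself open in general; it is currently accessible only in special situations, such as codimension one and the cases covered by Demailly's integrability theorem~\cite{Demailly02}, so I would try to identify classes of Poisson structures (e.g.\ small corank, or $\X$ of small dimension) where those inputs suffice. Second, and more seriously, a general Poisson structure is singular: on the degeneracy locus where the rank of $\pi$ exceeds $d$ the symplectic distribution is not a subbundle, $\Theta=\pi^{\wedge j}$ need not be decomposable, and there is no honest regular foliation to which the existing cohomologically K\"ahler machinery applies. Overcoming this would require a genuinely singular, multivector version of the cohomologically K\"ahler argument, controlling the contribution of the degeneracy locus to $\coH[2j]{\X,\cO{\X}}$; this is the main obstacle, and where I would expect new techniques to be needed.
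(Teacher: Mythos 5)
Your setup is sound: the rank of $\pi$ is at least $d$ at every point, so $\pi^{\wedge j}$ is nowhere vanishing for $2j\le d$, Hodge symmetry reduces everything to a statement in $\coH[2j]{\X,\cO{\X}}$, and your identification of the crux in the regular, rank-$d$ case with \autoref{conj:regular-foliation-cK} matches the paper's point of view. But your conditional argument has a genuine gap that the paper closes with an idea you do not use. For $j<d/2$, your key claim --- that $\hook{\pi^{j}}\kahcl^{2j}\neq 0$ in $\coH[2j]{\X,\cO{\X}}$ --- is not the cohomologically K\"ahler condition of any foliation: $\pi^{\wedge j}$ is not decomposable even on the regular locus, so this per-degree nonvanishing does not follow from \autoref{conj:regular-foliation-cK}, and you offer no route to it. Your reduction therefore does not close even conditionally. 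The paper instead handles all $j\le d/2$ at once: from the single top-degree nonvanishing it builds, via \autoref{lem:nonzero-contraction} and the subcalibration machinery of \cite{globalweinstein} (\autoref{prop:subcal}), a holomorphic two-form $\sigma$ on $\X$ with $\theta=\pi^\sharp\sigma^\flat$ idempotent of rank $d$; then $\sigma^j$ is pointwise nonzero on the image of $\theta$ for every $j\le d/2$, which produces all the required Hodge classes simultaneously with no further cohomological input.

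Second, what you call the main obstacle --- the singular locus of $\pi$ --- is precisely what the paper circumvents rather than confronts. A Poisson subvariety of $\X$ that is minimal with respect to inclusion is automatically smooth, and the induced Poisson structure on it is regular, because degeneracy and singular loci are themselves Poisson subvarieties \cite{Polishchuk1997}; choosing such a submanifold $\Y$ inside the locus where the rank equals $d$, one only ever needs the cohomologically K\"ahler property for the honest regular symplectic foliation of $\Y$, and the holomorphic $d$-form produced by Hodge symmetry on $\X$ is fed back through its restriction to $\Y$ (this is exactly how \autoref{prop:subcal} is proved). This passage to a minimal Poisson submanifold is also what makes the unconditional low-dimensional cases (\autoref{prop:poisson-vanishing}, \autoref{prop:Fano-vanishing}) accessible, since on $\Y$ either the rank or the corank is small and the results of \autoref{S:splitting} apply. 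So the missing ingredients are not ``genuinely singular, multivector'' techniques, but the reduction to a regular Poisson submanifold together with the subcalibration.
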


We show that this conjecture would follow from our conjecture that all regular foliations are cohomologically K\"ahler.  Exploiting our results in this paper and previous work on Bondal's conjecture, \autoref{prop:poisson-vanishing} and \autoref{prop:Fano-vanishing} together give the following.
\begin{theorem}
\autoref{conj:poisson-vanishing} holds  if $\X$ is projective and $\dim \X \leq 6$; if $\X$ is Fano and $\dim \X = 7$; or if $\X$ is Fano, $\dim \X = 8$ and $b_2(\X)=1$.
\end{theorem}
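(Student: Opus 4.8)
The plan is to prove the theorem as the combination of its two sources, exactly as flagged in the statement: the clause for projective $\X$ with $\dim\X\le 6$ comes from \autoref{prop:poisson-vanishing}, and the two Fano clauses from \autoref{prop:Fano-vanishing}. I would begin with the common reduction. Writing $d=2m$ for the minimal leaf dimension (symplectic leaves are even-dimensional), I note that a zero of $\pi$ is a $0$-dimensional leaf, so $d>0$ forces $\pi$ to be nowhere vanishing. Hodge symmetry on the compact K\"ahler manifold $\X$ identifies the Hodge number $\coh[2j,0]{\X}$ with $\dim_\CC\coH[2j]{\X,\cO{\X}}$, so in every case it suffices to exhibit nonzero classes in $\coH[2j]{\X,\cO{\X}}$ for $1\le j\le m$.

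For the projective low-dimensional clause I would run the cohomologically K\"ahler mechanism. The symplectic foliation $\cF$ of $\pi$ has $c_1(\cT{\cF})=0$ along its leaves, since a leafwise symplectic form trivializes its canonical bundle $\omega_\cF$ on the locus of maximal rank; by \autoref{prop:main_lemma} the cohomologically K\"ahler property is then equivalent to the existence of a complement to $\cT{\cF}$. The idea is to transport the leafwise symplectic form and its powers to global Hodge classes: the nonvanishing of $\kahcl^{\dim\cF}$ in $\coH[\dim\cF]{\X,\omega_\cF}$ provided by the cohomologically K\"ahler condition, combined with the Poisson trivialization of $\omega_\cF$, detects a nonzero class in $\coH[2j]{\X,\cO{\X}}$. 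Granting \autoref{conj:regular-foliation-cK} this yields \autoref{conj:poisson-vanishing} in general; unconditionally, in dimension $\le 6$ the foliations that arise have either small codimension—where cohomological K\"ahlerness is automatic for codimension one or follows from \autoref{thm:num_flat_new_main}—or dimension two, where \autoref{thm:second_main} applies, together with Demailly's integrability theorem \cite{Demailly02}. This is precisely what caps the unconditional projective statement at dimension six.

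For the Fano clauses I would instead use that $\coh[p,0]{\X}=0$ for all $p\ge 1$ on any Fano manifold. Under this vanishing the conclusion $\coh[2j,0]{\X}\ne 0$ can hold only for $j=0$, so \autoref{conj:poisson-vanishing} for a Fano $\X$ is \emph{equivalent} to the statement $d=0$, i.e.\ that every holomorphic Poisson structure on $\X$ has a zero. This is exactly the assertion of Bondal's conjecture \cite{Bondal1993} for the deepest degeneracy locus, and I would feed the known cases into \autoref{prop:Fano-vanishing}: nonemptiness of the zero locus is available for Fano manifolds of dimension $7$, and for Fano manifolds of dimension $8$ with $b_2(\X)=1$. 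Combining the two propositions then proves all three clauses.

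The hardest part is the projective reduction rather than the Fano one, which is essentially a translation of already-established instances of Bondal's conjecture. Within the projective argument, the genuine obstacles are twofold: controlling the singular degeneracy loci of $\cF$, so that the cohomologically K\"ahler condition—which is stated for \emph{regular} foliations—can be applied to the symplectic foliation of a possibly degenerate Poisson structure; and bridging the gap between the minimal leaf dimension $d$ and the generic rank, i.e.\ propagating nonvanishing from the top power down to the intermediate powers $j\le d/2$. Verifying the cohomologically K\"ahler property for exactly the foliations occurring in dimension $\le 6$, via \autoref{thm:num_flat_new_main}, \autoref{thm:second_main}, and the codimension-one case, is where the real work lies.
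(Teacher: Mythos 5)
Your top-level plan---deducing the theorem by combining \autoref{prop:poisson-vanishing} and \autoref{prop:Fano-vanishing}---is exactly the paper's proof, but both of your supporting sketches have genuine gaps. For the projective clause, you run the cohomologically K\"ahler mechanism on the symplectic foliation of $\X$ itself. This cannot work: that foliation is singular whenever $\pi$ is degenerate, and it is often not cohomologically K\"ahler at all. Indeed, by the paper's remark following \autoref{conj:regular-foliation-cK}, no distribution of dimension $0<p<n$ on $\PP^n$ is cohomologically K\"ahler, because $\coH[p]{\PP^n,\omega_{\cD}}=0$ for every line bundle $\omega_{\cD}$; so for a rank-two Poisson structure on $\PP^3$ (well within the $\dim\X\le 6$ range) your mechanism produces nothing, even though the conclusion is true there. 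The two difficulties you explicitly leave open---regularity, and propagating nonvanishing down to the powers $j\le d/2$---are resolved in the paper by an ingredient you never mention: passage to a closed Poisson submanifold together with the subcalibration machinery. One takes a minimal closed Poisson subvariety $\Y\subseteq\X$, which is automatically smooth with $\pi|_\Y$ regular of rank $p\ge d$; one proves that the symplectic foliation \emph{of $\Y$} is cohomologically K\"ahler, which is where $\dim\Y\le 6$ enters (the rank or the corank of $\pi|_\Y$ is at most two, covered by \autoref{theorem:classification_small_rank}, \autoref{prop:codimension-one-cK}, and \autoref{lem:codimension-two-cK}); and then \autoref{prop:subcal} converts this into a subcalibration, i.e.\ a single global holomorphic two-form $\sigma$ on $\X$ with $\pi^\sharp\sigma^\flat$ idempotent of rank $p$. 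Since $\sigma$ restricts to a symplectic form on a rank-$p$ subbundle of $\cT{\X}$, all powers $\sigma^j$ with $j\le p/2$ are nonzero, which gives $\coh[2j,0]{\X}\neq 0$ for every $j\le d/2$ in one stroke. Your version---a nonzero class in $\coH[r]{\X,\cO{\X}}$ for the generic rank $r$, coming from an $r$-form that is not visibly a power of a two-form---reaches neither the intermediate exponents nor the minimal leaf dimension $d$.

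For the Fano clauses, your reduction to ``every Poisson structure has a zero'' is correct, but the claim that this nonemptiness is ``available'' as an already-established instance of Bondal's conjecture is wrong and makes the argument circular: before this paper, Bondal's conjecture was known only for Fano manifolds of dimension at most four, plus \autoref{conj:poisson-vanishing} for $\PP^5$. The dimension-$7$ and dimension-$8$ statements are new, and \autoref{prop:Fano-vanishing} proves them by funneling through \autoref{prop:poisson-vanishing}: Bott's vanishing theorem shows a Poisson structure on a Fano manifold is never regular, so its degeneracy locus is a nonempty closed Poisson subspace; if $\dim\X\le 7$ this subspace has dimension at most six and the ``more generally'' clause of \autoref{prop:poisson-vanishing} applies; if $\dim\X=8$ and $b_2(\X)=1$, either such a subspace of dimension at most six exists, or there is a smooth Poisson hypersurface $\Y$, and adjunction plus $b_2(\X)=1$ shows that $\Y$ is either Fano of dimension seven (so the previous case applies) or has pseudo-effective canonical bundle, in which case \autoref{P:c1=0+psef} supplies the cohomologically K\"ahler property and \autoref{prop:subcal} concludes. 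None of this case analysis appears in your proposal, so as written the Fano clauses rest on results that do not exist outside the paper you were asked to reprove.
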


\paragraph{Acknowledgements:} J.V.P. and S.D. acknowledge support from  the CAPES-COFECUB project Ma1017/24, funded by the French Ministry for Europe and Foreign Affairs, the French Ministry for Higher Education and CAPES.  J.V.P.~was also supported by CNPq (PQ Scholarship 304690/2023-6 and Projeto Universal 408687/2023-1 “Geometria das Equações Diferenciais Alg\'ebricas”), and FAPERJ (Grant number E26/200.550/2023). B.P.~was supported by a faculty startup grant at McGill University, a New university researchers startup grant from the Fonds de recherche du Qu\'ebec -- Nature et technologies (FRQNT), and by the Natural Sciences and Engineering Research Council of Canada (NSERC), through Discovery Grant RGPIN-2020-05191.  F.T. would like to thank Henri Lebesgue Center for constant support.

\section{Notation, conventions, and useful facts}\label{section:notation}

\subsection{Global conventions} Throughout the paper, all algebraic varieties are assumed to be defined over the field of complex numbers. We will freely switch between the algebraic and analytic context.

\subsection{Stability} The word \textit{stable} will always mean \textit{slope-stable with respect to a
given ample divisor}, and similarly for \textit{semistable} and \textit{polystable}.
We refer to \cite[Definition 1.2.12]{HuyLehn} for their precise definitions.

\subsection{Numerically flat vector bundles}\label{sec:num-flat}
 One key notion is that of a numerically flat vector bundle. We recall the definition following \cite[Definition 1.17]{DPS94}.

\begin{definition}
A vector bundle $\cE$ of rank $r\ge 1$ on a compact K\"{a}hler manifold is called \textit{numerically flat} if $\cE$ and $\cE^*$ are nef vector bundles. Equivalently, $\cE$ is numerically flat if and only if $\cE$ and $\det \cE ^*$ are nef vector bundles.
\end{definition}

\begin{remarkx}\label{rem:num_flat_zero_first_Chern_class}
Let $\X$ be a compact K\"{a}hler manifold, and let $\cE$ be a vector bundle of rank $r \ge 1$ on $\X$ with $c_1(\cE) = 0 \in \coH[2]{\X,\mathbb{C}}$. Then $\cE$ is numerically flat if and only if $\cE$ is nef.
\end{remarkx}

\begin{theorem}[{\cite[Theorem 1.18]{DPS94}}]\label{thm:numerically_flat}
Let $\X$ be a compact K\"{a}hler manifold, and let $\cE$ be a vector bundle of rank $r \ge 1$ on $\X$. Then $\cE$ is numerically flat if and only if $\cE$ admits a filtration
$$\{0\}=\cE_0 \subsetneq \cE_1 \subsetneq \cdots \subsetneq \cE_m=\cE$$
by vector subbundles such that the quotients $\cE_k/\cE_{k-1}$ are hermitian flat.
\end{theorem}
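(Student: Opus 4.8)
The plan is to prove both implications, with the forward direction (``filtration $\Rightarrow$ numerically flat'') being elementary and the converse relying on the full strength of the Kobayashi--Hitchin correspondence together with positivity properties of nef bundles. For the easy direction, I would first note that a hermitian flat bundle carries a flat unitary connection, and its dual is again hermitian flat; both are therefore nef, so a hermitian flat bundle is numerically flat. It then suffices to observe that numerical flatness is stable under extensions: if $0 \to \cE' \to \cE \to \cE'' \to 0$ is exact with $\cE'$ and $\cE''$ numerically flat, then $\cE$ is nef, because nefness is preserved under extensions \cite{DPS94}, and dually $\cE^*$ is nef, so $\cE$ is numerically flat. Applying this inductively along the filtration $\cE_0 \subsetneq \cdots \subsetneq \cE_m$ shows that $\cE$ is numerically flat.

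For the converse, suppose $\cE$ is numerically flat and fix a K\"ahler class $\omega$ on $\X$, with $n = \dim \X$. I would begin by recording the vanishing of the low-degree Chern classes. Since $\det\cE$ and $\det\cE^*$ are both nef, their first Chern classes are numerically trivial, giving $c_1(\cE)=0$ in $\coH[2]{\X,\RR}$. For the second Chern class, I would invoke the nonnegativity of Schur polynomials of nef bundles (Fulton--Lazarsfeld positivity, extended to the nef K\"ahler setting in \cite{DPS94}): applied to the two degree-two Schur polynomials $c_2$ and $c_1^2-c_2$ of the nef bundle $\cE$, and using $c_1(\cE)=0$, this yields simultaneously $c_2(\cE)\cdot\omega^{n-2}\ge 0$ and $-c_2(\cE)\cdot\omega^{n-2}\ge 0$, whence $c_2(\cE)\cdot\omega^{n-2}=0$.

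Next I would establish semistability and run the analytic argument. Any torsion-free quotient $\mathcal{Q}$ of $\cE$ is a quotient of a nef bundle, so $\det\mathcal{Q}$ is nef and $c_1(\mathcal{Q})\cdot\omega^{n-1}\ge 0$, i.e.\ $\mu_\omega(\mathcal{Q})\ge 0 = \mu_\omega(\cE)$; hence $\cE$ is $\omega$-semistable of slope zero. I would then choose a Jordan--H\"older filtration whose graded pieces $\cG_1,\dots,\cG_m$ are $\omega$-stable of slope zero, each with $c_1(\cG_k)=0$. By the Kobayashi--Hitchin correspondence (Uhlenbeck--Yau in the K\"ahler case), each $\cG_k$ admits a Hermite--Einstein metric, and the Bogomolov--L\"ubke inequality gives $c_2(\cG_k)\cdot\omega^{n-2}\ge 0$. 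On the other hand, additivity of Chern classes along the filtration, together with $c_1(\cG_k)=0$, yields $c_2(\cE)\cdot\omega^{n-2}=\sum_k c_2(\cG_k)\cdot\omega^{n-2}$. Since the left-hand side vanishes and each summand is nonnegative, every summand vanishes; the equality case of the Bogomolov--L\"ubke inequality then forces the Hermite--Einstein connection on each $\cG_k$ to be projectively flat, and since $c_1(\cG_k)=0$ this connection is in fact flat. Thus each $\cG_k$ is hermitian flat; as the graded pieces are then locally free, the filtration is by subbundles, as required.

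The main obstacle is this last analytic step: upgrading semistability to genuine flatness of the graded pieces. It hinges on three ingredients working in concert in the K\"ahler (not merely projective) category -- the existence of Hermite--Einstein metrics on stable bundles, the Bogomolov--L\"ubke inequality with its sharp equality case characterizing projectively flat connections, and the vanishing $c_2(\cE)\cdot\omega^{n-2}=0$ coming from Schur-polynomial positivity. One technical point requiring care is that the Jordan--H\"older factors are a priori only reflexive sheaves, so one must verify that the discriminant computation and the equality discussion are unaffected by the codimension-two singular locus; since all the relevant Chern numbers are integrals against $\omega^{n-2}$, this ultimately causes no difficulty.
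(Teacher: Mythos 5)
The paper does not actually prove this statement: it is imported verbatim, with citation, from Demailly--Peternell--Schneider \cite[Theorem 1.18]{DPS94}, so there is no internal proof to compare against. Your argument is, in substance, a reconstruction of the standard (original) proof: the easy direction via stability of nefness under extensions, and the converse via $c_1(\cE)=0$, the vanishing $c_2(\cE)\cdot\omega^{n-2}=0$ from Schur-polynomial positivity of nef bundles in the K\"ahler setting (which is indeed established in \cite{DPS94}), $\omega$-semistability, and the Kobayashi--Hitchin correspondence combined with the equality case of the Bogomolov--L\"ubke inequality. Two steps are stated more casually than they deserve. First, you assert that each Jordan--H\"older piece satisfies $c_1(\cG_k)=0$ as a cohomology class, whereas stability of slope zero only gives $c_1(\cG_k)\cdot\omega^{n-1}=0$; since this is exactly what kills the cross terms in the Whitney formula and what upgrades projective flatness to genuine flatness, it needs an argument --- for instance, each $\det(\cE/\cE_k)$ is the determinant of a torsion-free quotient of a nef bundle, hence nef of slope zero, hence zero in $\coH[2]{\X,\RR}$ by the Hodge index theorem, and $c_1(\cG_k)$ is a difference of two such classes. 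Second, the Jordan--H\"older pieces are a priori only torsion-free sheaves, so Uhlenbeck--Yau does not apply as stated; one needs Bando--Siu's admissible Hermite--Einstein metrics on reflexive hulls and the fact that equality in the L\"ubke inequality forces local freeness --- the difficulty is the analytic theory across the singular locus, not merely the Chern-number bookkeeping, so your closing remark understates what is being used. Both points are standard and repairable, so the proposal is correct in outline and essentially coincides with the proof in \cite{DPS94}.
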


\begin{corollary}[{\cite[Corollary 1.19]{DPS94}}]
Let $\cE$ be a vector bundle on a compact K\"ahler manifold $\X$. If $\cE$ is numerically flat, then $c_i(\cE) = 0 \in \coH[2i]{\X,\mathbb{C}}$ for any integer $i \ge 1$.
\end{corollary}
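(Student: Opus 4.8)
The plan is to combine the filtration provided by \autoref{thm:numerically_flat} with the multiplicativity of the total Chern class in short exact sequences. First I would invoke \autoref{thm:numerically_flat} to obtain a filtration
\[
\{0\}=\cE_0 \subsetneq \cE_1 \subsetneq \cdots \subsetneq \cE_m=\cE
\]
by vector subbundles whose successive quotients $\cE_k/\cE_{k-1}$ are hermitian flat. This reduces the problem from an arbitrary numerically flat bundle to its hermitian flat pieces, for which Chern classes are easy to control.

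The key point is that a hermitian flat bundle has trivial total Chern class in $\coH[\bullet]{\X,\CC}$. Indeed, by definition such a bundle carries a hermitian metric whose Chern connection is flat, so its curvature form vanishes identically; since the Chern classes are represented in de Rham cohomology by $\Aut{\cdot}$-invariant polynomials in the curvature (Chern--Weil theory), they are all zero. Hence $c(\cE_k/\cE_{k-1})=1$ for each $k$, where $c(-)$ denotes the total Chern class.

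Finally I would propagate this through the filtration using the Whitney sum formula. For each short exact sequence $0 \to \cE_{k-1} \to \cE_k \to \cE_k/\cE_{k-1} \to 0$ one has $c(\cE_k)=c(\cE_{k-1})\cdot c(\cE_k/\cE_{k-1})=c(\cE_{k-1})$ in $\coH[\bullet]{\X,\CC}$. Starting from $c(\cE_0)=1$ and inducting on $k$ then yields $c(\cE)=c(\cE_m)=1$, which is precisely the assertion that $c_i(\cE)=0\in\coH[2i]{\X,\CC}$ for all $i\ge 1$.

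There is no genuine obstacle here once \autoref{thm:numerically_flat} is granted; the entire content is packaged into that filtration theorem, and the remaining steps are formal. The only point that deserves a line of justification is the vanishing of the Chern classes of a hermitian flat bundle, which is the standard Chern--Weil computation recalled above; everything else is the multiplicativity of $c(-)$, valid in complex de Rham cohomology.
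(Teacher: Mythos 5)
Your proof is correct and is essentially the argument behind the cited result: the paper itself gives no proof but quotes \cite[Corollary 1.19]{DPS94}, which is derived there exactly as you do, by combining the filtration of \autoref{thm:numerically_flat} with Chern--Weil vanishing for the hermitian flat quotients and the Whitney formula. Note only that the conclusion genuinely requires complex (or real) coefficients, as you use -- flat bundles can have nontrivial torsion Chern classes integrally -- so your restriction to $\coH[2i]{\X,\CC}$ is not a cosmetic choice but a necessary one.
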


\begin{remarkx}
A numerically flat vector bundle is automatically flat \cite[Section 3]{Simpson92} but is not hermitian flat in general \cite[Example 1.7]{DPS94}.
\end{remarkx}
\subsection{Foliations}

\begin{definition}
A \emph{foliation} $\cF$ on a complex manifold $\X$ is determined by its tangent sheaf $\cT{\cF}$, which is a  coherent subsheaf  of $\cT{\X}$ such that
\begin{enumerate}
\item $\cT{\cF}$ is closed under the Lie bracket, and
\item $\cT{\cF}$ is saturated in $\cT{\X}$, i.e.~the quotient $\cT{X}/\cT{\cF}$ is torsion-free.
\end{enumerate}
The \emph{dimension} $p$ of $\cF$ is the generic rank of $\cT{\cF}$.
The \emph{codimension} of $\cF$ is defined as $q=\dim \X-p$.  The \emph{normal sheaf} of $\cF$ is $\cN{\cF}=(\cT{\X}/\cT{\cF})^{**}$; by definition, it is a torsion-free sheaf of rank $q$.

\medskip

The cotangent sheaf $\Omega_\cF^1$ of $\cF$ is $\Omega_\cF^1=\cT{\cF}^*$. Its \textit{canonical line bundle} $\omega_\cF$ is $\omega_\cF = \det \Omega_\cF^1 = (\wedge^p\,\Omega_\cF^1)^{**}$.  Note that the canonical map $\wedge^p \cT{\cF} \to \wedge^p \cT{\X}$ determines a section
\[
v \in \coH[0]{\X,\wedge^p \cT{\X} \otimes \omega_\cF},
\]
from which we may recover $\cT{\cF}$ as the sheaf of vector fields $\xi \in \cT{\X}$ such that $v \wedge \xi=0$.  We say that $v$ is a \emph{$p$-vector defining $\cF$}.  Dually, we may define $\cF$ by a form
\[
\alpha \in \coH[0]{\X,\forms[q]{\X}\otimes \omega_\cF^*}
\]
whose kernel is $\cT{\cF}$.

\medskip

Let $\X^\circ \subseteq \X$ be the open set where $\cT{\cF}$ is a subbundle of $\cT{\X}$. The \emph{singular locus of $\cF$} is
\[
\mathrm{Sing}(\cF) := \X \setminus \X^\circ.
\]
The foliation is called \emph{regular}, or \emph{smooth}, if its singular locus is empty.

\medskip

A \emph{leaf} of $\cF$ is a maximal connected and immersed holomorphic submanifold $\bL \subseteq \X^\circ$ such that $\cT{\bL}=\cT{\cF}|_{\bL} \subseteq \cT{X^\circ}|_{\bL}$.
\end{definition}

\begin{lemma}\label{lem:invariant}
Let $\Y \subseteq \X$ be a submanifold that is not contained in the singular locus of $\cF$.  Then the following statements are equivalent:
\begin{enumerate}
\item For every $y\in \Y \cap \X^\circ$, the leaf through $y$ is contained in $\Y$.
\item The defining $p$-vector is tangent to $\Y$, i.e.
\[v|_\Y \in \coH[0]{\Y,\wedge^p\cT{\Y}\otimes\omega_\cF|_\Y} \subseteq
\coH[0]{\Y,\wedge^p\cT{\X}|_\Y\otimes\omega_\cF|_\Y}.\]
\item The contraction $\iota_v|_\Y : \forms[p]{\X}|_\Y \to \omega_\cF|_\Y$ factors through the natural surjection $\forms[p]{\X}|_\Y \to \forms[p]{\Y}$.
\end{enumerate}
\end{lemma}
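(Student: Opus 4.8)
The plan is to separate the linear-algebraic equivalence (2)$\Leftrightarrow$(3) from the geometric equivalence (1)$\Leftrightarrow$(2), and throughout to exploit that $\Y^\circ := \Y\cap\X^\circ$ is a \emph{dense} open subset of $\Y$. Density holds because $\mathrm{Sing}(\cF)$ is a proper analytic subset of $\X$ and $\Y$ is not contained in it, so $\Y\cap\mathrm{Sing}(\cF)$ is a proper analytic subset of (each component of) $\Y$. This lets me check the equivalences pointwise on $\Y^\circ$ and then propagate them, either by holomorphic extension or by the local structure of leaves.

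For (2)$\Leftrightarrow$(3), I would argue pointwise at each $y\in\Y$, noting that both conditions are $\omega_\cF|_\Y$-linear, so the twist by the line $\omega_\cF|_y$ is irrelevant. Writing $W=\cT{\Y}_y\subseteq V=\cT{\X}_y$, the kernel of the restriction $\forms[p]{\X}|_y\to\forms[p]{\Y}_y$ is exactly the annihilator $(\wedge^p W)^\perp$ under the perfect pairing $\wedge^p V^*\times\wedge^p V\to\CC$. The contraction $\iota_v|_y$ sends $\alpha\mapsto\abrac{\alpha,v_y}$, hence factors through $\forms[p]{\Y}_y$ if and only if $v_y$ annihilates $(\wedge^p W)^\perp$, which by non-degeneracy of the pairing holds if and only if $v_y\in\wedge^p W=\wedge^p\cT{\Y}_y$. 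This is exactly condition (2) at $y$, and running the argument at every point of $\Y$ yields the equivalence.

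For (1)$\Rightarrow$(2): if the leaf through each point of $\Y^\circ$ lies in $\Y$, then for $y\in\Y^\circ$ the tangent space to that leaf, which is $\cT{\cF}_y$, is contained in $\cT{\Y}_y$; hence $v_y\in\wedge^p\cT{\cF}_y\otimes\omega_\cF|_y\subseteq\wedge^p\cT{\Y}_y\otimes\omega_\cF|_y$. Thus the holomorphic section $v|_\Y$ lands in the subbundle $\wedge^p\cT{\Y}\otimes\omega_\cF|_\Y\subseteq\wedge^p\cT{\X}|_\Y\otimes\omega_\cF|_\Y$ over the dense open set $\Y^\circ$, and since this is an inclusion of vector bundles, the containment extends across the analytic set $\Y\cap\mathrm{Sing}(\cF)$ to all of $\Y$, giving (2).

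For (2)$\Rightarrow$(1): condition (2) says $v_y\in\wedge^p\cT{\Y}_y\otimes\omega_\cF|_y$, and on $\Y^\circ$ the vector $v_y$ is decomposable, spanning the line $\wedge^p\cT{\cF}_y\otimes\omega_\cF|_y$; this forces $\cT{\cF}_y\subseteq\cT{\Y}_y$, so $\cT{\cF}|_{\Y^\circ}$ is an involutive subbundle of $\cT{\Y^\circ}$. By the Frobenius theorem it integrates to a regular foliation $\cF'$ of $\Y^\circ$ whose leaves are integral manifolds of $\cT{\cF}$, hence open subsets of leaves of $\cF$. Fixing $y\in\Y^\circ$ with leaf $\bL$, I would show $A:=\bL\cap\Y^\circ$ is open and closed in the connected immersed manifold $\bL$: openness because the $\cF'$-leaf through any $z\in A$ is an open neighborhood of $z$ in $\bL$ contained in $\Y^\circ$, and closedness by choosing flow boxes for $\cF$ small enough that $\Y$ is closed in them, so that a limit in $\bL$ of points of $A$ lies in $\Y\cap\X^\circ=\Y^\circ$; connectedness of $\bL$ then gives $\bL=A\subseteq\Y$. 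The main obstacle is precisely this global containment step: tangency of $\cF$ to $\Y$ along $\Y^\circ$ is immediate, but concluding that an entire (typically non-closed, immersed) leaf stays inside $\Y$ requires the open–closed argument together with careful use of the local-closedness of $\Y$ inside suitable flow boxes. The remaining directions are either pure linear duality or routine holomorphic extension.
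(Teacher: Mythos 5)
The paper states this lemma without proof, treating it as routine, so your write-up must stand on its own. Most of it does: the pointwise duality argument for (2)$\Leftrightarrow$(3), the direction (1)$\Rightarrow$(2) via density of $\Y^\circ=\Y\cap\X^\circ$ and the identity theorem (and your parenthetical ``each component'' is the right reading of the hypothesis -- if a whole component of $\Y$ sat inside $\mathrm{Sing}(\cF)$ the lemma would be false), and the openness half of the open--closed argument in (2)$\Rightarrow$(1) via Frobenius are all correct.

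The genuine gap is the closedness step in (2)$\Rightarrow$(1). You propose to ``choose flow boxes for $\cF$ small enough that $\Y$ is closed in them,'' justified by ``local-closedness of $\Y$.'' That is not enough: local closedness of an embedded submanifold provides neighbourhoods in which $\Y$ is closed only around points \emph{of} $\Y$, whereas the limit point $z\in\bL$ whose membership in $\Y$ you are trying to establish is precisely a point not yet known to lie in $\Y$, and around such a point no such flow box need exist. Indeed, without assuming $\Y$ closed in $\X$, the implication (2)$\Rightarrow$(1) is simply false: take $\X=\CC^2$ foliated by the horizontal lines, with defining vector field $v=\partial_x$, and let $\Y=\{(x,0)\,:\,|x|<1\}$ be the open unit disc inside the leaf $\{y=0\}$. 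This $\Y$ is an embedded, locally closed submanifold, and $v|_\Y$ is tangent to $\Y$, so (2) and (3) hold; yet the leaf through the origin is the entire line $\{y=0\}$, which is not contained in $\Y$. So the lemma tacitly requires $\Y$ to be closed in $\X$ -- consistent with every use of invariance in the paper, e.g.\ \autoref{lem:invariant-subspace-kahler} and \autoref{thm:num_flat_main}, which concern closed submanifolds and closed analytic subspaces. Once that hypothesis is made explicit, the repair is immediate and your flow boxes are unnecessary: the inclusion $\bL\hookrightarrow\X$ is continuous for the leaf topology, so $A=\bL\cap\Y^\circ=\bL\cap\Y$ (recall $\bL\subseteq\X^\circ$) is the preimage of the closed set $\Y$ and hence closed in $\bL$; the rest of your argument then goes through verbatim.
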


\begin{definition}
Let $\Y \subseteq \X$ be a submanifold that is not contained in the singular locus of $\cF$.  We say that $\Y$ is  \emph{$\cF$-invariant} if  any (hence all) of the equivalent properties of \autoref{lem:invariant} hold.
\end{definition}

By relaxing the condition of involutiveness on the tangent sheaf of a foliation, we arrive at the broader notion of a distribution.

\begin{definition} A \emph{distribution} $\cD$ on a complex manifold $\X$ is specified by its tangent sheaf $\cT{\cD}$, which is a coherent subsheaf of $\cT{\X}$, saturated in $\cT{\X}$, meaning that the quotient $\cT{\X}/\cT{\cD}$ is torsion-free. \end{definition}

All the concepts introduced for foliations earlier in this section -- such as dimension, codimension, cotangent sheaf, singular set, invariant submanifold,...  -- naturally extend to distributions. While every foliation is a distribution, the converse is not true. The key difference is that Frobenius theorem guarantees the existence of a leaf through any point outside the singular locus of a foliation, whereas an arbitrary distribution might have no leaf at all.

\section{Cohomologically K\"ahler foliations and split tangent bundles}\label{S:splitting}

\subsection{Cohomologically K\"ahler foliations}\label{subsection:cKf}
Throughout this section, $\X$ is a compact K\"ahler manifold of dimension $n$ and $\cD$ (resp.\ $\cF$) is a holomorphic distribution (resp.\ foliation) on $\X$ of dimension $p$, which we allow to be singular unless otherwise stated. We denote by $q=n - p$ its codimension.

The key notion is the following cohomological condition on $\cD$ or $\cF$, which is modeled on the behaviour of K\"ahler classes under restriction to submanifolds.

\begin{definition}\label{def:positive-invariant-subspace}
We say that $\cD$ is \textit{cohomologically K\"ahler}
if, for any K\"ahler class $\kahcl \in \coH[1]{\X,\forms[1]{\X}}\cong\coH[1,1]{\X}$, the image of $\kahcl^p$ under the natural map
\[
\hook{v} : \coH[p]{\X,\forms[p]{\X}} \to \coH[p]{\X,\omega_\cD}
\]
is nonzero.

We say that $\cD$ is \textit{weakly cohomologically K\"ahler}
if there exists a K\"ahler class $\kahcl \in \coH[1]{\X,\forms[1]{\X}}\cong\coH[1,1]{\X}$ such that
$\hook{v} \kahcl^p$ is  nonzero.
\end{definition}

\begin{example}\label{ex:extreme-cohK}
At the extremes, the foliations with $\cT{\cF}=0$ (respectively, $\cT{\cF}=\cT{\X}$) whose leaves are points (resp.~all of $\X$) are cohomologically K\"ahler.
\end{example}

We now give some useful sufficient conditions for a distribution $\cD$ to be cohomologically K\"ahler, starting with the following easy observations.

\begin{lemma}\label{lem:invariant-subspace-kahler}
If $\Y \subseteq \X$ is a closed $\cD$-invariant submanifold, and $\cD|_\Y$ is cohomologically K\"ahler, then $\cD$ itself is cohomologically K\"ahler.
\end{lemma}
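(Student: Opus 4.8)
The plan is to deduce cohomological K\"ahlerity of $\cD$ from that of $\cD|_\Y$ by exploiting the functoriality of the contraction map under restriction to $\Y$, together with the fact that the restriction of a K\"ahler class to a submanifold is again K\"ahler. Write $j\colon \Y \hookrightarrow \X$ for the inclusion. Since $\Y$ is $\cD$-invariant, part~(2) of \autoref{lem:invariant} tells us that the restricted $p$-vector $v|_\Y$ lies in $\coH[0]{\Y,\wedge^p\cT{\Y}\otimes\omega_\cD|_\Y}$, so it is a defining $p$-vector for $\cD|_\Y$ and identifies its canonical line bundle as $\omega_{\cD|_\Y}\cong\omega_\cD|_\Y$. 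Thus the condition ``$\cD|_\Y$ is cohomologically K\"ahler'' concerns the contraction $\hook{v|_\Y}\colon \coH[p]{\Y,\forms[p]{\Y}}\to\coH[p]{\Y,\omega_\cD|_\Y}$.

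The crux is a commutative square relating the two contraction maps. Part~(3) of \autoref{lem:invariant} asserts that $\hook{v}|_\Y\colon \forms[p]{\X}|_\Y\to\omega_\cD|_\Y$ factors through the natural surjection $\forms[p]{\X}|_\Y\to\forms[p]{\Y}$, the induced map being precisely $\hook{v|_\Y}$. Applying the functorial restriction map $\coH[\bullet]{\X,-}\to\coH[\bullet]{\Y,(-)|_\Y}$ on $p$-th cohomology to the sheaf morphism $\hook{v}\colon\forms[p]{\X}\to\omega_\cD$, this sheaf-level factorization yields the commutative diagram
\[
\begin{array}{ccc}
\coH[p]{\X,\forms[p]{\X}} & \xrightarrow{\hook{v}} & \coH[p]{\X,\omega_\cD} \\
\downarrow & & \downarrow \\
\coH[p]{\Y,\forms[p]{\Y}} & \xrightarrow{\hook{v|_\Y}} & \coH[p]{\Y,\omega_\cD|_\Y}
\end{array}
\]
in which the vertical arrows are the restriction maps $j^*$.

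With the diagram in hand the argument is immediate. Take an arbitrary K\"ahler class $\kahcl\in\coH[1,1]{\X}$; its restriction $j^*\kahcl=\kahcl|_\Y$ is a K\"ahler class on $\Y$, and since $j^*$ respects the cup product and the Hodge bigrading we have $j^*(\kahcl^p)=(\kahcl|_\Y)^p$. Because $\cD|_\Y$ is cohomologically K\"ahler, $\hook{v|_\Y}\bigl((\kahcl|_\Y)^p\bigr)\neq 0$ in $\coH[p]{\Y,\omega_\cD|_\Y}$. Chasing the square, this nonzero element equals the restriction $j^*\bigl(\hook{v}(\kahcl^p)\bigr)$; hence $\hook{v}(\kahcl^p)$ restricts to something nonzero and is therefore itself nonzero. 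As $\kahcl$ was an arbitrary K\"ahler class, $\cD$ is cohomologically K\"ahler.

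The only step demanding genuine care is the commutativity of the square, i.e.\ that forming $\hook{v}$ commutes with restricting cohomology classes to $\Y$; but this reduces, by naturality of the restriction map for morphisms of coherent sheaves, to the sheaf-level factorization already recorded in \autoref{lem:invariant}(3). The remaining ingredients---that the restriction of a K\"ahler class stays K\"ahler, and that a class with nonzero restriction is nonzero---are standard, so I expect no serious obstacle.
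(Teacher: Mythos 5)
Your proof is correct and is essentially the paper's own argument: the paper likewise invokes the factorization of $\hook{v}|_\Y$ through $\forms[p]{\X}|_\Y \to \forms[p]{\Y}$ (i.e.\ \autoref{lem:invariant}) and concludes via the resulting compatibility of the contraction maps with restriction of K\"ahler classes. You have merely made explicit the commutative square and the class chase that the paper compresses into ``which immediately implies the result.''
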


\begin{proof}
If $\Y \subseteq \X$ is a $\cD$-invariant subspace, then the restriction map $\forms[p]{\X} \to \omega_{\cD}|_\Y\cong \omega_{\cD|_\Y}$ factors through $\forms[p]{\Y}$, which immediately implies the result.
\end{proof}

\begin{lemma}\label{lem:topological-cover-kahler}
If $f\colon \Y \to \X$ is a finite \'etale cover, and $f^{-1}\cD$ is (weakly) cohomologically K\"ahler, then $\cD$ itself is (weakly) cohomologically K\"ahler.
\end{lemma}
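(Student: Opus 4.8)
The plan is to exploit the functoriality of contraction under the \'etale pullback $f$, together with the injectivity of $f^*$ on coherent cohomology. Write $N=\deg f$. Since $f$ is \'etale, $\dd f\colon \cT{\Y} \xrightarrow{\ \sim\ } f^*\cT{\X}$ is an isomorphism; under it the tangent sheaf of $f^{-1}\cD$ is $f^*\cT{\cD}$, its canonical bundle is $\omega_{f^{-1}\cD}=f^*\omega_\cD$, and its defining $p$-vector is $f^*v$. Moreover $f^*\forms[p]{\X}\cong\forms[p]{\Y}$ canonically, and under this identification the sheaf map $\hook{v}\colon\forms[p]{\X}\to\omega_\cD$ pulls back to $\hook{f^*v}\colon\forms[p]{\Y}\to\omega_{f^{-1}\cD}$. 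Passing to cohomology yields a commuting square
\[
f^*\circ\hook{v}=\hook{f^*v}\circ f^*\colon \coH[p]{\X,\forms[p]{\X}}\to\coH[p]{\Y,\omega_{f^{-1}\cD}},
\]
and since $\kahcl^p\in\coH[p]{\X,\forms[p]{\X}}$ pulls back to $(f^*\kahcl)^p$, we obtain $f^*\rbrac{\hook{v}\kahcl^p}=\hook{f^*v}(f^*\kahcl)^p$ for every class $\kahcl$.

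Next I would record that $f^*$ is injective on cohomology. As $f$ is finite and flat of degree $N$, for any locally free sheaf $\cE$ on $\X$ the trace furnishes a map $f_*f^*\cE\to\cE$ whose composite with the unit $\cE\to f_*f^*\cE$ is multiplication by $N$; since $f$ is finite one has $\coH[p]{\Y,f^*\cE}=\coH[p]{\X,f_*f^*\cE}$, so on cohomology $f_*\circ f^*=N\cdot\id$. Working over $\CC$, this shows that $f^*\colon\coH[p]{\X,\cE}\to\coH[p]{\Y,f^*\cE}$ is injective, which I will apply with $\cE=\omega_\cD$.

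For the (strong) cohomologically K\"ahler statement, let $\kahcl$ be any K\"ahler class on $\X$. Then $f^*\kahcl$ is again K\"ahler because $f$ is a local biholomorphism, so the hypothesis that $f^{-1}\cD$ is cohomologically K\"ahler gives $\hook{f^*v}(f^*\kahcl)^p\ne 0$. By the square above this equals $f^*\rbrac{\hook{v}\kahcl^p}$, and injectivity of $f^*$ forces $\hook{v}\kahcl^p\ne 0$. As $\kahcl$ was arbitrary, $\cD$ is cohomologically K\"ahler.

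The weak statement is where I expect the real obstacle to lie: there only a single K\"ahler witness $\delta$ on $\Y$ is provided, and it need not be a pullback from $\X$, so the argument above does not apply verbatim. The natural remedy is to reduce first to the case that $f$ is Galois with deck group $\G$ (replacing $\Y$ by the Galois closure and checking this reduction is harmless), and then to transport the witness downstairs using the companion pushforward square $\hook{v}\circ f_*=f_*\circ\hook{f^*v}$ together with the fact that the trace of a K\"ahler form is again K\"ahler. The difficulty is that averaging or pushing forward the witness may destroy the nonvanishing of $\hook{f^*v}\delta^p$: concretely, $f^*$ identifies the K\"ahler classes on $\X$ with precisely the $\G$-invariant K\"ahler classes on $\Y$, so one must rule out that the polynomial map $\delta\mapsto\hook{f^*v}\delta^p$ vanishes identically on this invariant subspace while being nonzero elsewhere. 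I expect overcoming this to require a genuine positivity input controlling how $\hook{v}\kahcl^p$ varies over the K\"ahler cone, rather than a purely formal manipulation of the trace.
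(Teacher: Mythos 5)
Your treatment of the strong case is correct and is precisely the paper's proof: the paper's entire argument consists of the commutative square intertwining $\hook{v}$ and $\hook{f^*v}$ via pullback, plus the observation that $f^*\kahcl$ is K\"ahler whenever $\kahcl$ is, followed by the assertion that this ``immediately implies the result''. One remark: your appeal to injectivity of $f^*$ via the trace is superfluous in this direction. From $f^*\rbrac{\hook{v}\kahcl^p}=\hook{f^*v}(f^*\kahcl)^p\neq 0$ one concludes $\hook{v}\kahcl^p\neq 0$ for the trivial reason that any linear map sends zero to zero; injectivity would only be needed for transferring a witness \emph{upward}, i.e.\ for the converse implication that a pullback of a weakly cohomologically K\"ahler distribution is again weakly cohomologically K\"ahler, which is not what the lemma asserts.

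As for the weak case, you are right that it does not follow formally from the square, and you should know that the paper does not resolve it either: the three-line proof quoted above establishes the strong case, and the weak case only when the witness on $\Y$ happens to be a pullback (equivalently, after passing to a Galois closure, a deck-invariant class). The obstacle you isolate --- that the homogeneous polynomial map $\delta \mapsto \hook{f^*v}\delta^p$ on $\coH[1]{\Y,\forms[1]{\Y}}$ could a priori vanish identically on the subspace of classes pulled back from $\X$ while being nonzero elsewhere on the K\"ahler cone, and that averaging or taking the trace of a witness is not known to preserve nonvanishing --- is exactly what is glossed over, and no ``positivity input'' of the kind you ask for appears anywhere in the paper. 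So your proposal proves everything the paper's proof actually proves. It is worth noting that only the strong case is ever invoked later (e.g.\ in \autoref{ex:torus-quotient-kahler}, and implicitly whenever the paper passes to finite \'etale covers), so the unproven weak half is inconsequential for the rest of the paper; but as a reading of the lemma as stated, your flagged obstacle is a genuine gap in the paper's own proof rather than a missing idea on your part.
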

\begin{proof}
Let $\cE:=f^{-1}\cD$. Notice that there is a commutative diagram
\begin{center}
\begin{tikzcd}[row sep=large]
\coH[p]{\Y,\forms[p]{\Y}} \ar[r, "\hook{v_\cE}"] & \coH[p]{\Y,\omega_\cE}\\
\coH[p]{\X,\forms[p]{\X}}\ar[u] \ar[r, "\hook{v_\cD}"] & \coH[p]{\X,\omega_\cD}\ar[u].
\end{tikzcd}
\end{center}
and every K\"ahler class on $\X$ pulls back to a K\"ahler class on $\Y$. This immediately implies the result.
\end{proof}

\begin{example}\label{ex:torus-quotient-kahler}
If $\X$ is a finite \'etale quotient of a torus $\T$, and $\cF|_\X$ is a linear foliation, i.e.~its pullback to $\T$ is induced by the action of a (not necessarily closed) subgroup of $\T$,
then $\cF$ is cohomologically K\"ahler.  Indeed, in this case, the pullback of $\cT{\cF}$ to $\T$ is trivial, and the pullback of any K\"ahler class $\kahcl$ can be represented by a Hermitian form that is nondegenerate on every trivial subbundle of $\cT{\T}$. The claim now follows from \autoref{lem:topological-cover-kahler}.
\end{example}

\begin{lemma}\label{lem:direct-sum-cK}
If there exists a distribution $\cE$ on $\X$ such that $\cT{\X}=\cT{\cD}\oplus\cT{\cE}$, then $\cD$ is cohomologically K\"ahler. 
\end{lemma}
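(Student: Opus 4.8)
The plan is to exploit the dual splitting of the cotangent bundle together with Serre duality. Dualizing $\cT{\X}=\cT{\cD}\oplus\cT{\cE}$ identifies $\forms[1]{\X}$ with $\forms[1]{\cD}\oplus\forms[1]{\cE}$, where $\forms[1]{\cD}=\cT{\cD}^{*}$ is realized as the annihilator of $\cT{\cE}$ and symmetrically for $\cE$. Taking $p$-th exterior powers exhibits $\omega_{\cD}=\wedge^{p}\forms[1]{\cD}$ as a direct summand of $\forms[p]{\X}$, and under this decomposition the contraction $\hook{v}\colon\forms[p]{\X}\to\omega_{\cD}$ is precisely the projection onto that summand. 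In particular $\cD$ is regular, and $\cE$ is a genuine distribution of dimension $q$ with its own defining $q$-vector $v_{\cE}$ and contraction $\hook{v_{\cE}}\colon\forms[q]{\X}\to\omega_{\cE}$.

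Next I would note that $\can{\X}=\wedge^{n}\forms[1]{\X}=\omega_{\cD}\otimes\omega_{\cE}$, so Serre duality supplies a perfect pairing $\coH[p]{\X,\omega_{\cD}}\times\coH[q]{\X,\omega_{\cE}}\to\coH[n]{\X,\can{\X}}\cong\CC$ given by cup product followed by integration. To prove that $\hook{v}\kahcl^{p}\neq 0$ it therefore suffices to exhibit a class in $\coH[q]{\X,\omega_{\cE}}$ pairing nontrivially with it, and the natural candidate is $\hook{v_{\cE}}\kahcl^{q}$. Fixing a K\"ahler form $\omega$ representing $\kahcl$, the forms $\hook{v}\omega^{p}$ and $\hook{v_{\cE}}\omega^{q}$ are $\overline{\partial}$-closed Dolbeault representatives of these classes (here one uses that $\hook{v}$ is a holomorphic bundle map and that $d\omega=0$ forces $\overline{\partial}\omega^{p}=0$), so the pairing is computed by $\int_{\X}(\hook{v}\omega^{p})\wedge(\hook{v_{\cE}}\omega^{q})$.

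The crux is the pointwise identity $(\hook{v}\omega^{p})\wedge(\hook{v_{\cE}}\omega^{q})=\binom{n}{p}^{-1}\omega^{n}$. I would verify it in coordinates adapted to the splitting at a point: writing $\omega=i\sum_{j,k}h_{j\bar{k}}\,dz_{j}\wedge d\bar{z}_{k}$ with $\cT{\cD}=\langle\partial_{z_{1}},\dots,\partial_{z_{p}}\rangle$ and $\cT{\cE}=\langle\partial_{z_{p+1}},\dots,\partial_{z_{n}}\rangle$, set $\eta_{j}=i\sum_{k}h_{j\bar{k}}\,d\bar{z}_{k}$ so that $\omega=\sum_{j}dz_{j}\wedge\eta_{j}$. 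The summand projection then keeps exactly the terms of $\omega^{p}$ with holomorphic factor among $dz_{1},\dots,dz_{p}$, giving $\hook{v}\omega^{p}=p!\,dz_{1}\wedge\eta_{1}\wedge\cdots\wedge dz_{p}\wedge\eta_{p}$ and likewise $\hook{v_{\cE}}\omega^{q}=q!\,dz_{p+1}\wedge\eta_{p+1}\wedge\cdots\wedge dz_{n}\wedge\eta_{n}$. Their product is $p!\,q!\,dz_{1}\wedge\eta_{1}\wedge\cdots\wedge dz_{n}\wedge\eta_{n}$, while $\omega^{n}=n!\,dz_{1}\wedge\eta_{1}\wedge\cdots\wedge dz_{n}\wedge\eta_{n}$, and the stated proportionality follows with no sign ambiguity since both stay in the interleaved ordering. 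Since $\int_{\X}\omega^{n}>0$, the pairing is nonzero, whence $\hook{v}\kahcl^{p}\neq 0$; note that the same computation simultaneously shows $\cE$ is cohomologically K\"ahler.

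The main obstacle I anticipate is precisely this pointwise computation, and specifically the fact that the splitting need not be orthogonal for $\omega$: the mixed components of $\omega$ do contribute to $\hook{v}\omega^{p}$, but the factorization $\omega=\sum_{j}dz_{j}\wedge\eta_{j}$ absorbs them cleanly and shows they are harmless. The only other points requiring care are the identification $\can{\X}\cong\omega_{\cD}\otimes\omega_{\cE}$ and the verification that $\hook{v}\omega^{p}$ genuinely represents $\hook{v}\kahcl^{p}$ under the Dolbeault isomorphism, both of which are routine.
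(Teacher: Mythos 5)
Your proof is correct, and it rests on the same two structural facts as the paper's: the splitting realizes $\omega_{\cD}$ as a direct summand of $\forms[p]{\X}$ with $\hook{v}$ the projection onto it, and nonvanishing is ultimately extracted from $\int_\X \kahcl^n > 0$. But the execution is genuinely different. The paper never chooses representatives and never invokes Serre duality: it decomposes the K\"ahler class itself, $\kahcl = \kahcl_\cD + \kahcl_\cE \in \coH[1]{\X,\forms[1]{\cD}} \oplus \coH[1]{\X,\forms[1]{\cE}}$, observes that $\kahcl_\cD^{p+1}=0$ and $\kahcl_\cE^{q+1}=0$ simply because $\forms[p+1]{\cD}$ and $\forms[q+1]{\cE}$ are the zero bundles, and then the binomial expansion gives $\kahcl^n = \binom{n}{p}\,\kahcl_\cD^p \wedge \kahcl_\cE^q$ together with $\hook{v}\kahcl^p = \kahcl_\cD^p$; if the latter vanished, so would $\kahcl^n$. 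You instead keep $\kahcl$ intact and replace this three-line cohomological computation by a Dolbeault-theoretic one: the cup-product pairing $\coH[p]{\X,\omega_{\cD}} \otimes \coH[q]{\X,\omega_{\cE}} \to \coH[n]{\X,\can{\X}} \cong \CC$ plus the pointwise identity $(\hook{v}\omega^p)\wedge(\hook{v_\cE}\omega^q) = \binom{n}{p}^{-1}\omega^n$, verified in adapted coordinates via the factorization $\omega = \sum_j dz_j \wedge \eta_j$ (which correctly absorbs the non-orthogonality of the splitting). What your version buys is an explicit integral formula and independence from the Hodge decomposition of $\coH[1,1]{\X}$; what it costs is exactly the bookkeeping you flag yourself: checking that $\hook{v}\omega^p$ represents $\hook{v}\kahcl^p$, and that the ordinary wedge of the two summand inclusions computes the duality pairing --- the latter only holds up to a universal nonzero constant (a sign of the type $(-1)^{pq}$, depending on conventions), which is harmless for nonvanishing but should be acknowledged rather than claimed to be absent. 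Note finally that your pointwise identity, once integrated, is precisely the paper's identity $\kahcl^n = \binom{n}{p}(\hook{v}\kahcl^p)\wedge(\hook{v_\cE}\kahcl^q)$ evaluated against the fundamental class, so the two arguments are two renderings of the same mechanism, yours analytic and the paper's purely cohomological; a side benefit common to both is that $\cE$ is seen to be cohomologically K\"ahler at the same time.
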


\begin{proof}
Let $\kahcl = \kahcl_\cD + \kahcl_\cE \in \coH[1]{\X,\forms[1]{\X}} = \coH[1]{\X,\forms[1]{\cD}} \oplus \coH[1]{\X,\forms[1]{\cE}}$. Then $\kahcl_\cD^{p+1}=0 \in \coH[p+1]{\X,\forms[p+1]{\cD}}$ and $\kahcl_\cE^{q+1}=0 \in \coH[q+1]{\X,\forms[q+1]{\cE}}$. As a consequence, $\kahcl^n={n \choose p}\cdot   \kahcl_\cD^{p}\wedge \kahcl_\cE^{q}$. On the other hand,  $\kahcl_\cD^p = \hook{v} \kahcl^p$. In particular, if $\hook{v} \kahcl^p= 0 \in \coH[p]{\X,\omega_\cD}$, then $\kahcl^n=0$, which is impossible since $\kahcl$ is a K\"ahler class and $\X$ is compact. This proves that $\cD$ is cohomologically K\"ahler.
\end{proof}

\begin{lemma}
If $\cD$ is regular of dimension $p=1$, then $\cD$ is cohomologically K\"ahler.
\end{lemma}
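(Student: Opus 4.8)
The plan is to argue directly with Dolbeault representatives and Serre duality. Since $p=1$ and $\cD$ is regular, $\cL:=\cT{\cD}$ is a line subbundle of $\cT{\X}$, so $\omega_\cD=\cL^*$ and the defining $1$-vector $v$ is simply the inclusion $\cL\hookrightarrow\cT{\X}$; correspondingly $\hook{v}\colon\forms[1]{\X}\to\omega_\cD$ is the natural surjection dual to this inclusion, with kernel the conormal bundle $\cN{\cD}^*$. Fix a Kähler form $\theta$ representing $\kahcl$. Trivializing $\cL$ locally by a nonvanishing holomorphic vector field $\xi$, with dual coframe $\xi^\vee$ of $\omega_\cD$, one checks that $\hook{v}\kahcl$ is represented by the globally defined, $\delb$-closed $\omega_\cD$-valued $(0,1)$-form $\eta:=(\hook{\xi}\theta)\otimes\xi^\vee$, which is nonzero as a form because $\theta$ is nondegenerate and $\xi$ nowhere vanishes. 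The whole point is therefore to show that $\eta$ is not $\delb$-exact.

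By Serre duality $\coH[1]{\X,\omega_\cD}$ is dual to $\coH[n-1]{\X,\cL\otimes\can{\X}}$, so it is equivalent to exhibit a $\delb$-closed $\cL$-valued $(n,n-1)$-form $\psi$ with $\int_\X\eta\wedge\psi\neq0$. There is an obvious candidate: setting $\psi=\big(\hook{\overline\xi}\theta\otimes\xi\big)\wedge\theta^{n-1}/\|\xi\|_\theta^2$, which is a global form (invariant under rescaling $\xi$), a pointwise computation in a $\theta$-unitary frame adapted to $\cL$ gives $\eta\wedge\psi=c_n\,\theta^n$ with $c_n\neq0$ depending only on $n$, so that $\int_\X\eta\wedge\psi$ is a nonzero multiple of $\int_\X\theta^n>0$. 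Equivalently, via the exact sequence $0\to\cN{\cD}^*\to\forms[1]{\X}\to\omega_\cD\to0$, the assertion $\hook{v}\kahcl\neq0$ says precisely that $\kahcl$ admits no $\delb$-closed representative $(1,1)$-form $\beta$ with $\hook{\xi}\beta=0$, i.e.\ no representative vanishing along the leaf directions.

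The main obstacle is that this candidate $\psi$ is not $\delb$-closed in general: since $\delb\psi=\delb^{\cL}\zeta\wedge\theta^{n-1}$ for $\zeta=\hook{\overline\xi}\theta\otimes\xi/\|\xi\|_\theta^2=-i\,\xi^\flat\otimes\xi/\|\xi\|_\theta^2$, the Kähler identity $[\Lambda,\delb]=-i\del^*$ shows that $\delb\psi$ is governed by $\del^*\big(\xi^\flat/\|\xi\|_\theta^2\big)$, the divergence of the normalized leaf field, which vanishes only in special (e.g.\ homogeneous) situations — consistent with the fact that $\cD$ is trivially cohomologically K\"ahler when $\cT{\X}$ splits (\autoref{lem:direct-sum-cK}) or on torus quotients (\autoref{ex:torus-quotient-kahler}). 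In general the non-exactness of $\eta$ must instead be extracted globally from $\delb\theta=0$: assuming $\eta=\delb\tau$ and contracting with $\overline\xi$ yields the pointwise leafwise identity $\overline\xi(f)=i\|\xi\|_\theta^2>0$, where $\tau=f\,\xi^\vee$ locally, so that $\tau$ would fail to be holomorphic along every leaf with a fixed sign. Turning this leafwise positivity into a contradiction on the compact $\X$ is the heart of the matter, and I expect this to be the crux: the difficulty is that the leaves of $\cD$ are generally non-compact, so one cannot simply integrate over a single leaf, and the naive leafwise maximum principle fails because an uncontrolled leaf-curvature term appears. I anticipate resolving it by a global $L^2$/Bochner-type estimate on $\cL^*$ (or, equivalently, by producing a suitable closed positive transverse current pairing nontrivially with $\eta$), which replaces the non-closed $\psi$ above by an honest Serre-duality partner.
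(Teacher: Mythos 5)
There is a genuine gap, and you acknowledge it yourself: the Serre-duality partner $\psi$ you construct is not $\delb$-closed, and the leafwise-positivity argument you fall back on is left unfinished (``I anticipate resolving it by a global $L^2$/Bochner-type estimate\ldots''). As written, this is a proof sketch whose crucial step is missing, not a proof.

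What is striking is that you had the finishing move in hand and walked past it. Your reformulation via the conormal sequence $0\to\cN{\cD}^*\to\forms[1]{\X}\to\omega_\cD\to 0$ is exactly the paper's starting point: if $\hook{v}\kahcl=0$ in $\coH[1]{\X,\omega_\cD}$, the long exact sequence shows that $\kahcl$ lies in the image of $\coH[1]{\X,\cN{\cD}^*}\to\coH[1]{\X,\forms[1]{\X}}$, i.e.\ (as you say) $\kahcl$ has a $\delb$-closed representative $\beta$ whose holomorphic leg lies in $\cN{\cD}^*$. But then no analysis is needed at all: since $\cD$ is regular, $\cN{\cD}^*$ is a genuine subbundle of $\forms[1]{\X}$ of rank $n-1$, where $n=\dim\X$, so any $n$-fold wedge of $(1,1)$-forms whose holomorphic parts lie in $\cN{\cD}^*$ vanishes pointwise; hence $\kahcl^n=\left[\beta^n\right]=0$ in $\coH[n]{\X,\forms[n]{\X}}$, contradicting $\int_\X\kahcl^n>0$ on the compact $\X$. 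This wedge/rank observation is the paper's entire proof. Your instinct to reach for a closed positive transverse current is not wrong in general --- that is essentially the mechanism of \autoref{lemma:invariant_measure}, which the paper deploys for genuinely harder statements such as \autoref{P:psefnew} --- but for this lemma it is heavy machinery standing in for elementary multilinear algebra, and in your write-up that machinery is never actually supplied.
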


\begin{proof}
Let $\kahcl \in \coH[1]{\X,\forms[1]{\X}}\cong\coH[1,1]{\X}$ be a K\"ahler class. Suppose by way of contradiction that
$\hook{v} \kahcl= 0 \in \coH[p]{\X,\omega_\cF}$, then $\kahcl$ lies in the image of the natural map
\[
\coH[1]{\X,\cN{\cF}^*} \to \coH[1]{\X,\forms[1]{\X}}.
\]
It follows that $\kahcl^n=0$, which is impossible since $\kahcl$ is a K\"ahler class and $\X$ is compact.
\end{proof}

Actually, we expect that regular foliations are always cohomologically K\"ahler:

\begin{conjecture}\label{conj:regular-foliation-cK}
If $\cF$ is a regular foliation, then $\cF$ is cohomologically K\"ahler.
\end{conjecture}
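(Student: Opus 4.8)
The plan is to reformulate the condition via Serre duality and then try to exhibit a dual class by hand. Write $p=\dim\cF$ and $q=n-p$. Since $\cF$ is regular, the conormal sheaf $\coN{\cF}$ is a subbundle of $\forms[1]{\X}$, and from $0\to\coN{\cF}\to\forms[1]{\X}\to\forms[1]{\cF}\to 0$ one gets $\can{\X}\cong\det\coN{\cF}\otimes\omega_\cF$, hence $\omega_\cF^*\otimes\can{\X}\cong\det\coN{\cF}$. Serre duality therefore identifies $\coH[p]{\X,\omega_\cF}^*\cong\coH[q]{\X,\det\coN{\cF}}$. I would first record the compatibility: for $\beta\in\coH[q]{\X,\det\coN{\cF}}$ one has $\langle\hook{v}\kahcl^p,\beta\rangle=\int_\X\kahcl^p\wedge j_*\beta$, where $j\colon\det\coN{\cF}\hookrightarrow\forms[q]{\X}$ is the inclusion and $j_*\colon\coH[q]{\X,\det\coN{\cF}}\to\coH[q]{\X,\forms[q]{\X}}=\coH[q,q]{\X}$ the induced map. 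This holds because, at the level of forms, the wedge $\forms[p]{\X}\otimes\det\coN{\cF}\to\can{\X}$ annihilates every summand of a $p$-form except its purely leafwise part $\omega_\cF$ --- precisely the output of $\hook{v}$ --- since $\det\coN{\cF}$ already saturates the $q$ transverse directions. Thus $\cF$ is cohomologically K\"ahler if and only if, for every K\"ahler class $\kahcl$, the power $\kahcl^p$ is not orthogonal under $\int_\X$ to the image of $j_*$ in $\coH[q,q]{\X}$.

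Next I would produce a candidate element of this image from the K\"ahler metric. Fixing a K\"ahler form $\omega$ representing $\kahcl$ gives a $C^\infty$ orthogonal splitting $\cT{\X}=\cT{\cF}\oplus\mathcal{Q}$ with $\mathcal{Q}\cong\cN{\cF}$ smoothly, and dually a smooth decomposition of $\omega$ into leafwise, mixed, and transverse parts; write $\omega_\perp$ for the transverse $(1,1)$-part. Then $\omega_\perp^{\,q}$ is a smooth $(q,q)$-form which, under the metric identification, is naturally a $\det\coN{\cF}$-valued $(0,q)$-form, i.e.\ a candidate Dolbeault representative of a class $\beta$. The point of choosing this $\beta$ is positivity of the pairing: wedging $\omega^p$ with $\omega_\perp^{\,q}$ again kills everything but the leafwise part of $\omega^p$, so $\int_\X\omega^p\wedge\omega_\perp^{\,q}$ is, up to a positive combinatorial constant, the integral of (leafwise volume)$\,\wedge\,$(transverse volume), a strictly positive multiple of $\int_\X\omega^n>0$. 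At the level of \emph{forms}, then, the desired pairing is manifestly nonzero.

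The main obstacle is exactly that $\omega_\perp^{\,q}$ need not be $\bar\partial$-closed, so it does not \emph{a priori} define a class $\beta\in\coH[q]{\X,\det\coN{\cF}}$: the orthogonal complement $\mathcal{Q}$ is only a $C^\infty$ subbundle, not a holomorphic one, and this is the source of a $\bar\partial$-defect that must be absorbed without destroying the strict positivity computed above. When a holomorphic complement does exist --- i.e.\ when $\cT{\X}$ splits as in \autoref{lem:direct-sum-cK} --- the transverse part is genuinely holomorphic, $\omega_\perp^{\,q}$ is closed, and the argument goes through; likewise the codimension-one case, the $p=1$ case, and the \'etale-torus case (\autoref{ex:torus-quotient-kahler}) are known unconditionally, all consistent with the conjecture. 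In general, correcting $\omega_\perp^{\,q}$ to a $\bar\partial$-closed representative is the crux, and it is here that one expects to invoke Demailly's integrability theorem for forms valued in the dual of a pseudoeffective line bundle in order to manufacture the required closed positive object. Controlling this correction for an arbitrary regular foliation is what the conjecture leaves open, and is the step I expect to be genuinely hard.
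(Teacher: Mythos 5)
You should be clear at the outset that the statement you are addressing is a \emph{conjecture}: the paper does not prove it, and in fact states it (\autoref{conj:regular-foliation-cK}) precisely because no proof is known; only special cases are established. Your proposal, as you candidly admit in your last paragraph, does not prove it either, so judged as a proof it has a genuine gap: the step of replacing $\omega_\perp^{\,q}$ by a $\bar\partial$-closed representative without destroying the strict positivity of the pairing. This is not a technical loose end that a standard correction argument will fix. The orthogonal complement $\mathcal{Q}$ is only a $C^\infty$ subbundle, and the $\bar\partial$-defect of $\omega_\perp^{\,q}$ is controlled by the second fundamental form of the metric splitting (the failure of the K\"ahler metric to be bundle-like for $\cF$); for a general regular foliation there is no known mechanism, Demailly's theorem included, that absorbs this defect --- Demailly's integrability theorem produces the needed closed object only under a positivity hypothesis on $\det\cN{\cF}^*$, which is exactly what is unavailable in general. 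This missing step is not incidental to the conjecture; it \emph{is} the conjecture.

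That said, your reduction is sound and reproduces the mechanism behind the paper's partial results, which is worth pointing out. Your Serre-duality reformulation --- $\cF$ is cohomologically K\"ahler iff $\kahcl^p$ pairs nontrivially against the image of $\coH[q]{\X,\det\coN{\cF}}\to\coH[q]{\X,\forms[q]{\X}}$ --- is precisely the duality step in \autoref{lemma:invariant_measure}, and the ``closed positive object'' you hope to manufacture is what that lemma formalizes as a closed strongly directed positive current: your $\omega_\perp^{\,q}$ is pointwise of the strongly directed form $(\sqrt{-1})^{q^2} a\,\omega\wedge\overline{\omega}$ with $\omega$ a local generator of $\det\cN{\cF}^*$ and $a>0$, and the one missing property is closedness. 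The paper obtains such currents only under extra hypotheses: from Demailly's theorem when $\det\cN{\cF}^*$ is pseudoeffective (\autoref{P:psefnew}), from integration currents along invariant subvarieties, or, in codimension one, indirectly via Bott vanishing and the Hodge index theorem (\autoref{prop:codimension-one-cK}). One inaccuracy in your sketch: even when a holomorphic complement exists, the transverse part of the K\"ahler form need not be closed as a form; the paper's \autoref{lem:direct-sum-cK} argues instead at the level of cohomology classes, decomposing $\kahcl=\kahcl_\cD+\kahcl_\cE$ under the holomorphic splitting of $\forms[1]{\X}$ and using $\kahcl^n=\binom{n}{p}\kahcl_\cD^p\wedge\kahcl_\cE^q\neq 0$. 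So even in the split case the correct argument is cohomological rather than form-level, which is a further hint that any eventual proof of the conjecture will likely have to operate with classes or currents rather than with a preferred metric representative.
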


\begin{remark}
If we drop the condition of regularity or integrability of $\cF$, then the conclusion of \autoref{conj:regular-foliation-cK} is easily seen to fail.

Indeed, let $\X = \PP^n$ be a projective space of dimension $n \ge 2$. Then $\PP^n$ does not admit any regular foliations of dimension $0 < p < n$, as can be seen by applying Bott's vanishing theorem for characteristic classes of regular foliations.  However, $\PP^n$ admits many singular foliations of such dimensions, and also admits regular distributions of codimension one when $n$ is odd (the holomorphic contact structures).    These are never cohomologically K\"ahler: if $\cD$ is a distribution on $\PP^n$ of dimension $0 < p <n$, then $\omega_{\cD}$ is a line bundle on $\PP^n$, and hence  $\coH[p]{\PP^n,\omega_{\cD}} = 0$.
\end{remark}

\subsection{Demailly's integrability criterion}

Next, we proceed to show how Demailly's integrability criterion \cite{Demailly02}, or more precisely its proof, provides a natural class of cohomologically Kähler foliations.

We maintain the notation introduced in \autoref{subsection:cKf}. The key notion here is the following.

\begin{definition}
	Let $\mathcal D$ be a (possibly singular) distribution of codimension $q$ on a complex manifold $\M$. We say that a  closed positive current $T$ of bidegree $(q,q)$ on $\M$ is \emph{strongly directed} (with respect to $\mathcal D$) if one can locally write
		$$T={ ( \sqrt{-1})}^{q^2} a\ \omega\wedge \overline{\omega}$$
where $a$ is a positive locally finite Borel  measure and $\omega$ is a local generator of $\det\cN{\cD}^*$.
\end{definition}

\begin{lemma}\label{lemma:invariant_measure}
Let  $\Si \subseteq \X$ be an analytic subset of codimension $\geq q+2$. If $\cD$ admits a closed strongly directed positive current $T$ on $\X\setminus\Si$, then $\cD$ is cohomologically K\"ahler.	
\end{lemma}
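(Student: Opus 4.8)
The plan is to reinterpret the cohomologically K\"ahler condition as the nonvanishing of a Serre duality pairing, and to produce the required dual class from the directed current. Concretely, I must show that $\hook{v}\gamma^p\neq 0 \in \coH[p]{\X,\omega_\cD}$ for every K\"ahler class $\gamma$. Using Serre duality on the compact K\"ahler manifold $\X$ together with the canonical isomorphism $\can{\X}\cong \omega_\cD\otimes\det\cN{\cD}^*$, the dual of $\coH[p]{\X,\omega_\cD}$ is $\coH[q]{\X,\det\cN{\cD}^*}$. The key observation is that a $d$-closed strongly directed $(q,q)$-current $S$ represents a class in this space: its local form $(\sqrt{-1})^{q^2}a\,\omega\wedge\overline{\omega}$ has holomorphic part valued in $\det\cN{\cD}^*$, and $d$-closedness supplies the cocycle condition. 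Moreover, the Serre pairing of $\hook{v}\gamma^p$ with $[S]$ is computed, up to a fixed nonzero constant, by $\int_\X \gamma^p\wedge S$. This rests on the pointwise identity $\beta\wedge\omega=(\hook{v}\beta)\cdot\omega$ for $\beta\in\forms[p]{\X}$ and $\omega$ a local generator of $\det\cN{\cD}^*$, interpreted via $\omega_\cD\otimes\det\cN{\cD}^*\cong\can{\X}$; that is, contraction by $v$ is adjoint to wedging with a conormal generator, and a direct local computation in adapted coordinates shows both sides extract the same component of $\gamma^p$.

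Next I would extend the current across $\Si$, which is where the codimension hypothesis does its work. Since $\codim\Si\geq q+2$, equivalently $\dim\Si\leq p-2$, a standard extension theorem for closed positive currents applies: the high codimension forces $T$ to have locally finite mass near $\Si$, and the trivial extension of a closed positive current across an analytic set of vanishing $2p$-dimensional Hausdorff measure is again closed and positive. This yields a closed positive current $\widetilde{T}$ on all of $\X$ restricting to $T$ on $\X\setminus\Si$. Extension by zero across the measure-zero set $\Si$ preserves mass, so $\widetilde{T}\neq 0$; and $\widetilde{T}$ is still strongly directed on the dense open set $\X\setminus\Si$, so by the previous step it defines a class $[\widetilde{T}]\in\coH[q]{\X,\det\cN{\cD}^*}$.

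Finally, fix a K\"ahler form representing $\gamma$. Then $\gamma^p$ is a strictly positive $(p,p)$-form, so $\gamma^p\wedge\widetilde{T}$ is a positive measure with $\int_\X\gamma^p\wedge\widetilde{T}>0$, the integral vanishing only if $\widetilde{T}=0$. By the first step this integral is a nonzero multiple of the Serre pairing of $\hook{v}\gamma^p$ with $[\widetilde{T}]$, whence $\hook{v}\gamma^p\neq 0$; as $\gamma$ was arbitrary, $\cD$ is cohomologically K\"ahler. I expect the main obstacle to be the current-theoretic bookkeeping rather than any single estimate: one must check that strong directedness together with $d$-closedness genuinely make $\widetilde{T}$ a cocycle for $\coH[q]{\X,\det\cN{\cD}^*}$ whose Serre pairing is evaluated by the naive integral, and invoke the correct extension theorem so that the hypothesis $\codim\Si\geq q+2$ carries all of the analytic weight. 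Once these are in place, the conclusion follows purely from positivity, which is robust.
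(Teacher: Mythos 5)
Your overall strategy matches the paper's: view the strongly directed current as a $\bar\partial$-closed current with values in $\cL:=\det\cN{\cD}^*$, pair the resulting class against $\hook{v}\kahcl^p$ via Serre duality and the adjunction $\omega_\cD\cong\omega_\X\otimes\cL^*$, and conclude from positivity of $\int_\X\kahcl^p\wedge T$. However, there is a genuine gap at the crucial step where you claim that the extension $\widetilde T$ "defines a class $[\widetilde T]\in\coH[q]{\X,\det\cN{\cD}^*}$." The trivial (Harvey) extension of $T$ across $\Si$ is indeed a closed positive $(q,q)$-current on $\X$, but it need \emph{not} be strongly directed near $\Si$: the local writing $T=(\sqrt{-1})^{q^2}a\,\omega\wedge\overline\omega$ uses the image of a local generator of $\cL$ in $\forms[q]{\X}$, which may vanish along $\mathrm{Sing}(\cD)$, and there local finiteness of the mass of $T$ does not force local finiteness of the measure $a$; the paper explicitly notes that its extension $\bar T$ is "not necessarily strongly directed." Consequently your first-paragraph construction applies to $\widetilde T$ only on $\X\setminus\Si$, producing a class in $\coH[q]{\X\setminus\Si,\cL}$ rather than in $\coH[q]{\X,\cL}$, and Serre duality on the compact manifold $\X$ cannot be paired against a class living only on the noncompact complement.

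The missing ingredient --- and the real reason the hypothesis is $\codim\Si\geq q+2$ rather than $q+1$ --- is the cohomological extension theorem for coherent sheaves (Scheja/B\u{a}nic\u{a}, cited in the paper as \cite[Chapter 2]{Banica74}): since $\codim\Si\geq q+2$, the restriction map $\coH[q]{\X,\cL}\to\coH[q]{\X\setminus\Si,\cL}$ is an isomorphism, so the class of $T$ extends to a global class $c_\X\in\coH[q]{\X,\cL}$. One then checks that the image of $c_\X$ under the natural map $\coH[q]{\X,\cL}\to\coH[q]{\X,\forms[q]{\X}}$ coincides with the Dolbeault class of the Harvey extension $\bar T$ (both restrict to the class of $T$ on $\X\setminus\Si$, where restriction is again injective by the same theorem), after which your Serre duality and positivity argument closes exactly as written. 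Note that Harvey's extension of the current itself only requires $\codim\Si\geq q+1$; your proposal assigns "all of the analytic weight" of the codimension hypothesis to that step, and this misattribution is precisely what hides the gap: the extra unit of codimension is spent on extending the \emph{cohomology class}, not the current. (A minor further slip: the pointwise identity $\beta\wedge\omega=(\hook{v}\beta)\cdot\omega$ does not typecheck; the correct statement is that $\beta\wedge\omega$, an $n$-form valued in $\cL$, corresponds to $\hook{v}\beta$ under $\omega_\cD\otimes\cL\cong\omega_\X$.)
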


\begin{proof}
Set $\cL=\det\cN{\cD}^*$. The closed positive current $T$ can be alternatively regarded as an $\cL$-valued $\bar\partial$-closed current on $\X\setminus\Si$. Thus it defines a cohomology class $c_{\X\setminus\Si}\in \coH[q]{\X\setminus\Si,\cL}$. Thanks to the assumption on $\Si$, the natural map $$\coH[q]{\X,\cL}\to \coH[q]{\X\setminus\Si,\cL|_{\X \setminus \Si}}$$ is actually an isomorphism (see \cite[Chapter 2]{Banica74}). Let
$c_{\X}\in \coH[q]{\X,\cL}$ be the extension of $c_{\X\setminus\Si}$.

Because $\Si$ has codimension at least $q+2 \ge q+1$, the current $T$ extends by Harvey's Theorem \cite{Harvey74} to a closed positive current $\bar T$ (non necessarily strongly directed) on $\X$. Let $\left[\bar T\right]$ be the cohomology class defined by $\bar T$ in $\coH[q]{\X, \Omega_\X^q}$.
By construction,  $\left[\bar T\right]$ is the image of the class $c_\X\in \coH[q]{\X,\mathcal L}$ under the natural map
\begin{align}
\coH[q]{\X,\cL} \to \coH[q]{\X,\forms[p]{\X}}. \label{eq:conormal-inclusion}
\end{align}

Now, consider a K\"ahler form $\kahcl$ on $\X$. We argue by contradiction and assume that $\left[\hook{v}\kahcl^p\right] = 0 \in \coH[p]{\X,\omega_{\cD}}$. By Serre duality together with the adjunction formula $\omega_{\cD}\cong \omega_{\X}\otimes \cL^*$, we have $\coH[p]{\X,\omega_{\cD}}\cong \coH[q]{\X,\omega_{\cD}^*\otimes\omega_X}^*\cong \coH[q]{\X,\cL}^*$. Hence
$\left[\kahcl^p\right]\wedge \alpha = 0$ for any class $\alpha$
in the image of the map \eqref{eq:conormal-inclusion}.  In particular,  $\left[\gamma^p\right] \wedge \left[\bar T\right]=0$. On the other hand, $\int_\X \gamma^p\wedge \bar T >0$ by the positivity of $\bar T$, a contradiction. This finishes the proof of the proposition.
\end{proof}

The following result is an easy consequence of \autoref{lemma:invariant_measure} above together with the proof of Demailly's integrability criterion.

\begin{proposition}\label{P:psefnew}
If $\det\cN{\cD}^*$ is a pseudoeffective line bundle, then ${\cD}$ is integrable and cohomologically K\"ahler.
\end{proposition}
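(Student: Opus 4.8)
The plan is to treat the two conclusions separately, deriving integrability directly from Demailly's theorem and the cohomologically K\"ahler property from \autoref{lemma:invariant_measure}. For integrability, recall that a codimension-$q$ distribution $\cD$ is encoded by the inclusion $\cN{\cD}^* \hookrightarrow \forms[1]{\X}$, whose $q$-th wedge power $\det\cN{\cD}^* \hookrightarrow \forms[q]{\X}$ exhibits $\cD$ as the kernel of a holomorphic $q$-form $\omega$ with values in $(\det\cN{\cD}^*)^*$. Since by hypothesis $\det\cN{\cD}^*$ is pseudoeffective, this is precisely a holomorphic $q$-form with coefficients in the dual of a pseudoeffective line bundle, so Demailly's integrability criterion \cite{Demailly02} applies verbatim and shows that $\cT{\cD}$ is involutive. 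Hence $\cD$ is a foliation.

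For the cohomologically K\"ahler property, the strategy is to manufacture a closed strongly directed positive current and invoke \autoref{lemma:invariant_measure}. Write $\cL=\det\cN{\cD}^*$ and let $\Si=\mathrm{Sing}(\cD)$ be the singular locus, on whose complement $\cD$ is a subbundle and $\omega$ is a nonvanishing local generator of $\cL$. Pseudoeffectivity of $\cL$ furnishes a singular hermitian metric with local weight $e^{-\varphi}$, $\varphi$ plurisubharmonic, with positive curvature current. The key observation is that the expression $T=(\sqrt{-1})^{q^2}\,e^{-\varphi}\,\omega\wedge\overline{\omega}$ is independent of the chosen generator $\omega$: under a change of trivialization the transition factor of $\cL$ in $\omega\wedge\overline{\omega}$ cancels against the transformation of the weight $e^{-\varphi}$. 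Thus $T$ patches to a globally defined positive $(q,q)$-current on $\X\setminus\Si$ that is strongly directed by construction.

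The heart of the matter is the closedness of $T$, and here I would reproduce the computation underlying the proof of Demailly's criterion. Integrability of $\cD$ means that, locally, $d\omega=\eta\wedge\omega$ for a $(1,0)$-form $\eta$, which is exactly the connection form on $\cL$ induced along the leaves by the Bott partial connection. Differentiating $T$ and using this relation, the non-closed contributions organize into terms governed by $\del\delb\varphi$; positivity of the curvature current, combined with the integrability identity, forces these to cancel, so that $dT=0$ as a current on $\X\setminus\Si$. This is the same positivity-plus-integrability mechanism that drives Demailly's argument, now read off as the closedness of the directed current rather than as an obstruction to non-integrability.

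Finally, to conclude via \autoref{lemma:invariant_measure} one must present $T$ on the complement of an analytic set of codimension $\geq q+2$. Since $\cL$ is a genuine line bundle (being the determinant of a reflexive sheaf on a smooth manifold), the strongly directed form of $T$ persists across $\mathrm{Sing}(\cD)$ away from a set of the requisite codimension, and Harvey's extension theorem together with the cohomology-extension isomorphism $\coH[q]{\X,\cL}\cong\coH[q]{\X\setminus\Si,\cL}$ (valid in codimension $\geq q+2$, cf.\ \cite{Banica74,Harvey74}) lets one run the argument of \autoref{lemma:invariant_measure}. I expect this codimension bookkeeping for the singular locus, together with the verification of closedness globally as a current rather than merely on the smooth locus, to be the only genuinely delicate points; for regular foliations $\Si=\varnothing$ and both issues evaporate.
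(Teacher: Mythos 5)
Your route is the same as the paper's---integrability quoted directly from Demailly, then a strongly directed closed positive current fed into \autoref{lemma:invariant_measure}---but two of your steps fail as written, and a third is left genuinely open. First, the weight in your current has the wrong sign: with your convention ($h=e^{-\varphi}$ on $\cL$, $\varphi$ psh), a change of frame $\omega\mapsto g\omega$ forces $\varphi\mapsto\varphi-\log|g|^2$, so your expression $(\sqrt{-1})^{q^2}e^{-\varphi}\omega\wedge\overline{\omega}$ transforms by $|g|^4$ rather than cancelling; the frame-invariant object is $(\sqrt{-1})^{q^2}e^{+\varphi}\omega\wedge\overline{\omega}$, i.e.\ the pairing $\{\alpha,\alpha\}_{h^*}$ of the $\cL^*$-valued defining form $\alpha\in\coH[0]{\X,\forms[q]{\X}\otimes\cL^*}$ with itself under the \emph{dual} metric, which is what the paper uses. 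The sign also matters for being strongly directed: $e^{+\varphi}$ is locally bounded because $\varphi$ is psh, whereas $e^{-\varphi}$ need not even be locally integrable. Second, and more seriously, closedness of $T$ is \emph{not} a local consequence of ``integrability plus positivity'' as you sketch. Take $\X=\CC^2$, $\omega=\dd z_1$ (a regular foliation) and $\varphi=|z_2|^2$ (strictly psh); then $\dd\bigl(\sqrt{-1}\,e^{\varphi}\omega\wedge\overline{\omega}\bigr)=\sqrt{-1}\,e^{|z_2|^2}\bigl(\overline{z}_2\,\dd z_2+z_2\,\dd\overline{z}_2\bigr)\wedge \dd z_1\wedge \dd\overline{z}_1\neq 0$, so no local cancellation mechanism can exist. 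Closedness of $\{\alpha,\alpha\}_{h^*}$ is a global theorem: Demailly's proof integrates by parts over the compact K\"ahler manifold to kill exactly these cross terms, and this global statement is what the paper cites from \cite{Demailly02}.

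Third, your treatment of the singular locus creates a requirement the hypotheses cannot supply. By building $T$ only on $\X\setminus\Si$ with $\Si=\mathrm{Sing}(\cD)$, you need $\codim\Si\ge q+2$ to invoke \autoref{lemma:invariant_measure}, but the singular set of a distribution is only guaranteed to have codimension $\ge 2$; your claim that the strongly directed form ``persists away from a set of the requisite codimension'' is precisely what is missing, and you flag it yourself as unresolved. The repair---which is how the paper proceeds---is that no exceptional set is needed at all: $\alpha$ is a global holomorphic section of $\forms[q]{\X}\otimes\cL^*$, defined across $\mathrm{Sing}(\cD)$, and $e^{\varphi}$ is locally bounded, so $T=\{\alpha,\alpha\}_{h^*}$ is a positive, strongly directed, closed current on all of $\X$, and \autoref{lemma:invariant_measure} applies with $\Si=\varnothing$. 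With these three corrections your argument coincides with the paper's proof.
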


 \begin{proof}
 Let $\cL = \det \cN{\cF}^*$.   By assumption there exists a twisted $q$-form  $\alpha\in \coH[0]{\X,\forms[q]{\X}\otimes { \mathcal{L}}^*}$ defining $\cD$. The integrability of $\cD$ follows from \cite{Demailly02}. There, it is also proved that if $h$ is a singular metric on $\cL$ with local psh weight $\varphi$ ($h$ exists by assumption), then the semi-positive  $(q,q)$-form $T= {\{\alpha,\alpha\}}_{h^*}$, is $d$-closed in the sense of currents. Here,  $\{ \cdot, \cdot \}_{h^*}$ denotes the sesquilinear pairing induced by $h^*$ on $\cL^*$-valued forms. Locally, $T$ reads $$\mu= { \sqrt{-1}}^{q^2}e^\varphi \alpha\wedge\bar{\alpha},$$ and hence, it is a closed strongly directed positive current $T$ on $\X$. One concludes by Lemma \ref{lemma:invariant_measure}.
 \end{proof}

\begin{proposition}\label{P:c1=0+psef}
If the canonical line bundle $\omega_\X$ is pseudo-effective and $c_1(\cT{\cD})=0$, then $\cD$ is regular,  integrable, and cohomologically K\"ahler.
\end{proposition}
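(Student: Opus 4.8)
The plan is to deduce the first two assertions from \autoref{P:psefnew} by checking that the hypotheses force $\det\cN{\cD}^*$ to be pseudoeffective, and then to treat regularity as the genuinely new point.

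First I would carry out the numerical reduction. Since $\cT{\cD}$ is saturated in $\cT{\X}$, the quotient $\cT{\X}/\cT{\cD}$ is torsion-free and coincides with $\cN{\cD}$ outside a set of codimension $\ge 2$, so taking determinants gives the adjunction isomorphism $\omega_{\X}\cong\omega_{\cD}\otimes\det\cN{\cD}^*$ of reflexive rank-one sheaves. Passing to first Chern classes and using $c_1(\cT{\cD})=0$ (equivalently $c_1(\omega_{\cD})=0$) yields $c_1(\det\cN{\cD}^*)=c_1(\omega_{\X})$ in $\coH[1,1]{\X}$. As pseudoeffectivity of a line bundle depends only on its class in $\coH[1,1]{\X}$, and $\omega_{\X}$ is pseudoeffective by hypothesis, I conclude that $\det\cN{\cD}^*$ is pseudoeffective.

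Now \autoref{P:psefnew} applies verbatim: $\cD$ is integrable, hence a bona fide foliation $\cF$, and it is cohomologically K\"ahler. This settles two of the three claims, and it also hands us the closed strongly directed positive current $T$ from the proof of \autoref{P:psefnew}, representing the class $c_1(\det\cN{\cF}^*)=[\omega_{\X}]$.

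The remaining assertion, regularity, is where I expect the real difficulty. The numerical identity from the first step is blind to the singular locus $Z=\mathrm{Sing}(\cF)$, which has codimension $\ge 2$ and so affects no first Chern class; any proof of regularity must therefore engage the codimension-$\ge 2$ geometry rather than numerics. What I have to show is that the reflexive sheaf $\cT{\cF}$ is locally free and a genuine subbundle of $\cT{\X}$ — equivalently, that $\cN{\cF}$ is locally free — after which \autoref{thm:numerically_flat} would even identify the hermitian flat structure on $\cT{\cF}$. My approach would be to combine the vanishing $c_1(\cT{\cF})=0$ (so that $\omega_{\cF}$ is numerically trivial) with the strongly directed positive current $T$ produced in the proof of \autoref{P:psefnew}, and to analyze the behaviour of the leaves of $\cF$, equivalently of $T$, in a neighbourhood of $Z$; the goal is to show that a nonempty $Z$ is incompatible with both the pseudoeffectivity of $\det\cN{\cF}^*\equiv\omega_{\X}$ and the non-uniruledness of $\X$ that \cite{BDPP} extracts from it. This is precisely the smoothness statement established by \cite{Pereiratouzet2013} in the projective non-uniruled case, and the crux of the present proposition is to re-establish it in the compact K\"ahler generality, where the algebraicity and positivity tools used there are not directly available; controlling the current along $Z$ is the single hardest step.
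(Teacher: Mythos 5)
Your first half is correct and is essentially the paper's argument, lightly reordered: since $\cT{\cD}$ is saturated, adjunction for reflexive determinants gives $c_1(\det\cN{\cD}^*)=c_1(\omega_\X)-c_1(\omega_{\cD}^*)=c_1(\omega_\X)$, pseudoeffectivity of a line bundle on a compact K\"ahler manifold depends only on its class in $\coH[1,1]{\X}$, and \autoref{P:psefnew} is stated for possibly singular distributions, so integrability and the cohomologically K\"ahler property follow. That part stands.

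The genuine gap is regularity, which you explicitly leave as a plan rather than a proof --- and the plan itself points in the wrong direction. The paper disposes of regularity in one line, and in fact proves it \emph{first}: by \cite[Theorem 5.1]{Loray2018}, on a compact K\"ahler manifold whose canonical bundle is pseudoeffective, any nonzero section of $\wedge^p\cT{\X}\otimes\omega_{\cD}$ with $\omega_{\cD}$ numerically trivial is nowhere vanishing; applied to a defining $p$-vector $v$, this immediately shows $\mathrm{Sing}(\cD)=\emptyset$. So the ``crux'' you identify --- re-establishing the smoothness statement of \cite{Pereiratouzet2013} in compact K\"ahler generality --- was already done in \cite{Loray2018}, precisely as a nonvanishing theorem for twisted $p$-vectors, and no analysis of currents near the singular locus is needed. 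Your sketched alternative has two further problems: the characterization of uniruledness via non-pseudoeffectivity of $\omega_\X$ from \cite{BDPP} is a theorem about \emph{projective} manifolds, so it is not available in the K\"ahler setting you are trying to handle; and the strongly directed current produced in the proof of \autoref{P:psefnew} is defined on all of $\X$ and carries no obvious information about $\mathrm{Sing}(\cD)$, so it is unclear how it would force the singular locus to be empty. As written, the proposal establishes two of the three assertions and leaves the third open.
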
	

\begin{proof}
By \cite[Theorem 5.1]{Loray2018}, any $p$-vector $v \in \coH[0]{\X,\wedge^p\cT{\X}\otimes\omega_\cD}$ defining $\cD$ is nonvanishing.  In particular, the distribution $\cD$ is regular. The determinant
$\cL=\textrm{det}\  \cN{\cD}^*$ of the conormal bundle $\cN{ \cD}^*$
is then pseudo-effective since
$\cL \cong \omega_\X \otimes \omega_{\cD}^*$
by the adjunction formula. The claim now follows from \autoref{P:psefnew} above.
\end{proof}	

As another consequence of  \autoref{lemma:invariant_measure}, one can give a version in the K\"ahler setting of a result proved by Esteves and Kleiman in an algebraic context, see \cite{EstevesKleiman}.
\begin{proposition}
Let $\Y$ a codimension $q$ subvariety of $\X$ such that $\Y\setminus \mathrm{Sing}(\cD)$ is a ${\cD}$-invariant submanifold. Suppose that $\Si:=\textup{Sing}(\cD) \cap \Y$ has codimension $\geq 2$ in $\Y$. Then  ${\cD}$ is cohomologically K\"ahler.
\end{proposition}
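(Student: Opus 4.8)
The plan is to deduce the statement from \autoref{lemma:invariant_measure} by exhibiting an explicit closed strongly directed positive current on $\X\setminus\Si$. The natural candidate is the current of integration $T=[\Y]$ along $\Y$. Before constructing it, I would record the codimension bookkeeping that makes the hypotheses match: since $\cD$ has codimension $q$, the subvariety $\Y$ has dimension $p=n-q$, so the assumption that $\Si=\mathrm{Sing}(\cD)\cap\Y$ has codimension $\geq 2$ in $\Y$ translates into $\codim_\X\Si\geq q+2$, which is exactly what \autoref{lemma:invariant_measure} requires. Note also that $\Si$ is analytic in $\X$, being the intersection of the analytic sets $\mathrm{Sing}(\cD)$ and $\Y$.

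Next I would construct the current on $\X\setminus\Si$. By hypothesis $\Y\setminus\mathrm{Sing}(\cD)=\Y\cap\X^\circ$ is a submanifold, so the singularities of $\Y$ are contained in $\Si$; consequently $\Y$ is a closed complex submanifold of $\X\setminus\Si$ of pure codimension $q$, and the integration current $T=[\Y]$ is a well-defined closed positive current of bidegree $(q,q)$ on $\X\setminus\Si$ by Lelong's theorem. It then remains to check that $T$ is strongly directed with respect to $\cD$, and this is where the hypotheses enter: since $\Y$ is $\cD$-invariant and $\dim\Y=p=\dim\cD$, at every point of $\Y\cap\X^\circ$ the inclusion $\cT{\cD}|_\Y\subseteq\cT{\Y}$ is an equality of rank-$p$ bundles, whence $\cN{\cD}^*|_\Y=\cN{\Y}^*$. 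Therefore any local generator $\omega$ of the line bundle $\det\cN{\cD}^*$ restricts on $\Y$ to a nonvanishing generator of $\det\cN{\Y}^*$, i.e.\ to a transverse volume form for $\Y$.

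With this identification in hand I would verify the strongly directed condition locally, in two cases. Away from $\Y$ the current $T$ vanishes and is trivially of the required form with $a=0$; here I would only remark that $\det\cN{\cD}^*$ is a genuine line bundle on all of $\X\setminus\Si$ -- it is the dual of the determinant of the reflexive normal sheaf $\cN{\cD}$ on the smooth manifold $\X$ -- so a local generator $\omega$ exists even at the points of $\mathrm{Sing}(\cD)\setminus\Y$. Near a point of $\Y\cap\X^\circ$ I would choose coordinates $(z,w)=(z_1,\dots,z_p,w_1,\dots,w_q)$ with $\Y=\{w=0\}$ and write $[\Y]=(\sqrt{-1})^{q^2}a\,\omega\wedge\overline{\omega}$, where $a$ is the surface measure on $\Y$ rescaled by the factor relating $\omega|_\Y$ to $dw_1\wedge\cdots\wedge dw_q$. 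Since that factor is nonvanishing by the previous paragraph, $a$ is a positive locally finite Borel measure, so $T$ is strongly directed. Applying \autoref{lemma:invariant_measure} then gives that $\cD$ is cohomologically K\"ahler.

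The only genuinely delicate step is the strongly directed verification, and within it the conceptual point is the equality $\cN{\cD}^*|_\Y=\cN{\Y}^*$, which rests on combining $\cD$-invariance with the coincidence of dimensions $\dim\Y=\dim\cD$; once this is established, the measure-theoretic description of $[\Y]$ is routine. A secondary technical point worth stating explicitly is that $\det\cN{\cD}^*$ stays invertible across $\mathrm{Sing}(\cD)$, so that the local expression defining ``strongly directed'' makes sense on the whole of $\X\setminus\Si$ and not merely on $\Y\cap\X^\circ$.
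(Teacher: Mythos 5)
Your proposal is correct and is exactly the paper's argument: the paper proves this proposition in one line by applying \autoref{lemma:invariant_measure} to the integration current along $\Y\setminus\Si$, which is precisely your current $T=[\Y]$ on $\X\setminus\Si$. The details you supply --- the codimension bookkeeping $\codim_\X\Si\geq q+2$, and the identification $\cN{\cD}^*|_\Y=\cN{\Y}^*$ (from $\cD$-invariance plus $\dim\Y=\dim\cD$) that makes $[\Y]$ strongly directed --- are exactly the verifications the paper treats as immediate.
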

\begin{proof}
This follows immediately from \autoref{lemma:invariant_measure} applied to the integration current along $\Y\setminus\Si$.
\end{proof}	

\subsection{Foliations of codimension at most two}

In this section, we confirm \autoref{conj:regular-foliation-cK} for regular foliations of codimension one as well as regular foliations of codimension two with numerically trivial canonical bundle on projective manifolds.

We maintain the notation introduced in \autoref{subsection:cKf}.

\begin{proposition}\label{prop:codimension-one-cK}
If $\cF$ has codimension $q=1$ and its singular set has codimension at least $3$, then $\cF$ is cohomologically K\"ahler.
\end{proposition}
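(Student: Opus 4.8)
The plan is to deduce the statement directly from \autoref{lemma:invariant_measure}. Since $\cF$ has codimension $q=1$, the codimension bound $\geq q+2=3$ required of the exceptional set in that lemma is precisely the hypothesis imposed on $\mathrm{Sing}(\cF)$. Accordingly I would take $\Si=\mathrm{Sing}(\cF)$, so that $\X\setminus\Si=\X^\circ$ is exactly the locus where $\cF$ is regular, and reduce the whole proposition to the analytic problem of producing a closed, strongly directed, positive $(1,1)$-current $T$ on $\X^\circ$. Once such a $T$ is in hand, \autoref{lemma:invariant_measure} applies verbatim and gives that $\cF$ is cohomologically K\"ahler. Note that $\det\cN{\cF}^*=\cN{\cF}^*$ is a genuine line bundle on all of $\X$ (the determinant of the reflexive normal sheaf), and that on $\X^\circ$ a local generator of it is simply a nonvanishing holomorphic $1$-form defining $\cF$, so the notion of ``strongly directed'' is the expected one here.

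I would first dispose of the pseudoeffective regime. If $\cN{\cF}^*$ is pseudoeffective on $\X$, then \autoref{P:psefnew} applies immediately with $\cD=\cF$ and already yields that $\cF$ is cohomologically K\"ahler, with no currents to build by hand and without even invoking the hypothesis on $\mathrm{Sing}(\cF)$. Thus I may assume that $\cN{\cF}^*$ is \emph{not} pseudoeffective; this is the case in which the current must be constructed on $\X^\circ$. On $\X^\circ$, the Frobenius theorem furnishes local first integrals: near each point $\cF$ is the fibration $\{f_i=\mathrm{const}\}$ of a holomorphic submersion $f_i\colon U_i\to\CC$, the form $df_i$ is a local generator of $\cN{\cF}^*$, and each $T_i:=\sqrt{-1}\,df_i\wedge\overline{df_i}=f_i^*\bigl(\sqrt{-1}\,dz\wedge\overline{dz}\bigr)$ is a closed, positive, strongly directed $(1,1)$-current. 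More generally $f_i^*\mu_i$ is closed, positive and strongly directed for any positive transverse measure $\mu_i$, being the pullback of a top-degree current on a Riemann surface. The problem is to glue these local pieces into a single global current on $\X^\circ$.

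The main obstacle is precisely this gluing. Writing $T=f_i^*\mu_i$ on $U_i$, the requirement that $T$ be globally defined and $d$-closed forces the family $\{\mu_i\}$ to be invariant under the holonomy pseudogroup of $\cF$; by the Ruelle--Sullivan correspondence, a closed strongly directed positive current is the same datum as a nonzero holonomy-invariant transverse positive measure. I would therefore attack the non-pseudoeffective case by establishing the existence of such an invariant transverse measure, using the structure theory of regular codimension-one foliations on compact K\"ahler manifolds, and then push it forward through the local first integrals to obtain $T$ on $\X^\circ$; \autoref{lemma:invariant_measure} then closes the argument. I expect the real difficulty to lie here: showing that, once the conormal bundle fails to be pseudoeffective, the transverse dynamics of $\cF$ necessarily carries an invariant measure (equivalently, ruling out ``hyperbolic'' transverse behaviour with no invariant current), and checking that the resulting current behaves well enough near $\Si$ for Harvey's extension step inside \autoref{lemma:invariant_measure} to apply. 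This dichotomy between the pseudoeffective conormal case (handled by Demailly's machinery through \autoref{P:psefnew}) and the invariant-measure case is the crux of the proof.
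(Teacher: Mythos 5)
Your reduction to \autoref{lemma:invariant_measure} correctly handles the pseudoeffective regime via \autoref{P:psefnew}, but the entire weight of the argument then rests on the claim you defer: that when $\cN{\cF}^*$ is \emph{not} pseudoeffective, the foliation carries a holonomy-invariant transverse measure (equivalently, by the Ruelle--Sullivan correspondence you invoke, a closed strongly directed positive current). That step is not merely the hard part left open --- it is false. Take a curve $C$ of genus at least $2$, a non-elementary representation $\rho\colon \pi_1(C)\to \mathrm{PSL}(2,\CC)$ (e.g.\ Fuchsian), and let $\X$ be the associated flat $\PP^1$-bundle over $C$ with its suspension (Riccati) foliation $\cF$, transverse to the fibres. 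This is a regular codimension-one foliation on a projective surface, so the proposition applies with $\Si=\emptyset$. Its normal bundle restricts on each fibre to $\cT{\PP^1}\cong\cO{\PP^1}(2)$, so $\cN{\cF}^*$ has degree $-2$ on a covering family of curves and is not pseudoeffective; and its holonomy is exactly the action of $\rho(\pi_1(C))$ on $\PP^1$, which, being non-elementary, preserves no probability measure on $\PP^1$ (loxodromic north--south dynamics forces any invariant measure onto fixed points). Hence this $\cF$ admits no invariant transverse measure and no closed strongly directed positive current, so \autoref{lemma:invariant_measure} is inapplicable --- yet the proposition (correctly) asserts that $\cF$ is cohomologically K\"ahler. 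The criterion of \autoref{lemma:invariant_measure} is a sufficient condition strictly stronger than the conclusion, and your dichotomy cannot close.

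The paper's proof avoids transverse dynamics entirely and is purely cohomological. Arguing by contradiction, assume $\hook{v}\kahcl^{n-1}=0$ in $\coH[n-1]{\X,\omega_\cF}$ for some K\"ahler class $\kahcl$. Serre duality plus the adjunction formula $\omega_\cF\cong\omega_\X\otimes\cN{\cF}$ identify $\coH[n-1]{\X,\omega_\cF}$ with $\coH[1]{\X,\cN{\cF}^*}^*$, so every class $\alpha$ in the image of $\coH[1]{\X,\cN{\cF}^*}\to\coH[1]{\X,\forms[1]{\X}}$ satisfies $\alpha\wedge\kahcl^{n-1}=0$; since also $\alpha^2=0$ (as $\cN{\cF}^*$ has rank one), the Hodge index theorem forces $\alpha=0$, i.e.\ the map vanishes identically. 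The codimension-$3$ hypothesis on $\Si$ identifies first cohomology on $\X$ with that on $\X\setminus\Si$, and Bott's vanishing theorem applied to the regular foliation $\cF|_{\X\setminus\Si}$ shows that $c_1(\cN{\cF})$ lies in the image of that (zero) map, whence $c_1(\cN{\cF})=0$. But then $\cN{\cF}^*$ is numerically trivial, hence pseudoeffective, and \autoref{P:psefnew} yields that $\cF$ is cohomologically K\"ahler, contradicting the assumption. Note that pseudoeffectivity enters only at the very end, applied to a numerically trivial bundle: the genuinely non-pseudoeffective situation is ruled out by Bott vanishing and Hodge theory, not handled by constructing a current.
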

\begin{proof}
Let $\kahcl \in \coH[1]{\X,\forms[1]{\X}}\cong\coH[1,1]{\X}$ be a K\"ahler class. We argue by contradiction and assume that $\hook{v}\kahcl^{n-1} = 0 \in \coH[n-1]{\X,\omega_{\cF}}$. By Serre duality together with the adjunction formula $\omega_{\cF}\cong \omega_{\X}\otimes \cN{\cF}$, we have
$\coH[n-1]{\X,\omega_{\cF}}\cong \coH[1]{\X,\omega_{\cF}^*\otimes\omega_X}^*\cong \coH[1]{\X,\cN{\cF}^*}^*$. Thus $\alpha \wedge \kahcl^{n-1} = 0$ for any class $\alpha$
in the image of the natural map
\[
\coH[1]{\X,\cN{\cF}^*} \to \coH[1]{\X,\forms[1]{\X}}.
\]
On the other hand, if $\alpha$ is a class as above, then $\alpha^2=0 \in \coH[2]{\X,\forms[2]{\X}}$. Together with the Hodge index theorem, this implies that the map
\[
\coH[1]{\X,\cN{\cF}^*} \to \coH[1]{\X,\forms[1]{\X}}
\]
vanishes.

Thanks to the assumption on the dimension of the singular set $\Si$ of $\cF$, the natural maps
$$\coH[1]{\X,\cN{\cF}^*}\to \coH[1]{\X\setminus\Si,\cN{\cF}^*|_{\X \setminus \Si}}$$
and
$$\coH[1]{\X,\forms[1]{\X}}\to \coH[1]{\X\setminus\Si,\forms[1]{\X\setminus \Si}}$$
are isomorphisms (see \cite[Chapter 2]{Banica74}).
By Bott's vanishing theorem applied to $\cF|_{\X\setminus \Si}$, we see that $c_1(\cN{\cF})$ lies in the image of the map

\[
\coH[1]{\X,\cN{\cF}^*} \to \coH[1]{\X,\forms[1]{\X}}.
\]
This implies $c_1(\cN{\cF})=0$, which contradicts \autoref{P:psefnew}.
\end{proof}

\begin{lemma}\label{lem:codimension-two-cK}
Suppose that $\X$ is a smooth projective variety. If $\cF$ is regular of codimension $q=2$ and $c_1(\cT{\cF})=0$, then $\cF$ is cohomologically K\"ahler.
\end{lemma}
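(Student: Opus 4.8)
The plan is to dichotomize according to the positivity of the canonical bundle $\omega_\X$, exploiting the adjunction identity $\det\cN{\cF}^{*}\cong\omega_\X\otimes\det\cT{\cF}$ together with the hypothesis $c_1(\cT{\cF})=0$, which makes $\det\cT{\cF}$ numerically trivial and hence $\det\cN{\cF}^{*}$ numerically equivalent to $\omega_\X$. It is convenient to record the cohomological reformulation: via Serre duality and the adjunction $\omega_{\cF}\cong\omega_\X\otimes\det\cN{\cF}$ one has $\coH[n-2]{\X,\omega_{\cF}}\cong\coH[2]{\X,\det\cN{\cF}^{*}}^{*}$, so that (with $p=n-2$) cohomological K\"ahlerness of $\cF$ amounts to a nonvanishing pairing of $\gamma^{p}$ against the image of the natural map $\coH[2]{\X,\det\cN{\cF}^{*}}\to\coH[2]{\X,\forms[2]{\X}}$.

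First suppose $\omega_\X$ is pseudoeffective. Since $\det\cN{\cF}^{*}$ is numerically equivalent to $\omega_\X$, it is then pseudoeffective as well, and \autoref{P:c1=0+psef} applies directly with the hypotheses $\omega_\X$ pseudoeffective and $c_1(\cT{\cF})=0$ to give that $\cF$ is cohomologically K\"ahler (one may equally invoke \autoref{P:psefnew}). This disposes of the case in essentially one line, so all of the content lies in the complementary case.

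The substance is therefore the case where $\omega_\X$ is not pseudoeffective; here \cite{BDPP} forces $\X$ to be uniruled, and projectivity enters decisively. The strategy is to use a covering family of minimal rational curves. For a general (free) member $C$ one has $\cT{\X}|_C$ nef and $\deg\cT{\cF}|_C=0$, whence $\cN{\cF}|_C$ is a quotient of a nef bundle, hence nef, of degree $-K_\X\cdot C>0$. I would first rule out that such a covering family is tangent to $\cF$: a tangent covering family would exhibit the general leaf as uniruled, and the algebraicity/compactness criteria for leaves available on projective manifolds (in the circle of ideas of Bogomolov--McQuillan and Kebekus--Sol\'a Conde--Toma) would then force that leaf to be an algebraic, uniruled subvariety, contradicting $c_1(\cT{\cF})=0$, which makes its canonical class numerically trivial and hence pseudoeffective. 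Thus the covering rational curves must be transverse to $\cF$.

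With fiberwise transversality in hand, the remaining task is to upgrade it to an honest geometric decomposition. The aim is to organize the transverse rational curves, e.g.\ through the maximal rationally connected fibration, into a complementary distribution $\cG$ with $\cT{\X}=\cT{\cF}\oplus\cT{\cG}$, after which \autoref{lem:direct-sum-cK} finishes immediately; alternatively, one produces, possibly after a finite \'etale cover (\autoref{lem:topological-cover-kahler}), a compact $\cF$-invariant leaf $\bL$, whose restricted foliation is its full tangent foliation and is therefore cohomologically K\"ahler by \autoref{ex:extreme-cohK}, so that \autoref{lem:invariant-subspace-kahler} applies. I expect this last step---passing from transversality of the rational curves to a global splitting or to compact invariant leaves---to be the main obstacle: it is precisely where one must invoke algebraicity of leaves and the geometry of the rational quotient, and hence the reason the statement is confined to the projective setting.
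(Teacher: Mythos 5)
Your first case is exactly the paper's: when $\omega_\X$ is pseudoeffective, \autoref{P:c1=0+psef} (equivalently \autoref{P:psefnew} via adjunction) settles the claim. But in the complementary, uniruled case your argument is a sketch with an admitted hole, and that hole is precisely where all the difficulty lives. Ruling out tangency of a covering family of rational curves to $\cF$ is indeed feasible (regular foliations with $c_1(\cT{\cF})=0$ are not uniruled, cf.\ \cite[Corollary 3.8]{Loray2018}), but the step you flag as ``the main obstacle''---passing from transversality of the rational curves to a complementary distribution or to a compact invariant leaf---does not follow from general MRC-fibration technology: the MRC quotient is a priori only a rational map, the curves through a point do not assemble into a distribution, and nothing in your outline produces integrability or a splitting of $\cT{\X}$. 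The paper does not attempt this; it invokes the structure theorem \cite[Theorem 1.1]{DruelCodimensiontwo} for regular codimension-two foliations with $c_1=0$ on projective manifolds with non-pseudoeffective canonical bundle. That theorem (itself a substantial piece of work, not an elementary lemma) says that either $\X$ carries a $\PP^1$-bundle structure $f\colon \X\to\Y$ everywhere transverse to $\cF$, in which case $\cF$ induces a regular codimension-one foliation $\cG$ on $\Y$ with $c_1(\cT{\cG})=0$, one splits $\cT{\Y}\cong\cT{\cG}\oplus\cT{\cL}$ using \autoref{prop:codimension-one-cK} and \autoref{prop:main_lemma}, pulls back to get $\cT{\X}\cong\cT{\cF}\oplus\cT{f^{-1}\cL}$, and concludes by \autoref{lem:direct-sum-cK}; or $\X$ fibres smoothly over a manifold $\Y$ of dimension $\dim\X-2$ with $c_1(\Y)=0$ and $\cF$ is a flat connection on $f$, in which case \autoref{lem:direct-sum-cK} applies directly. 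Your proposal identifies the right dichotomy but leaves this entire second half unproved.

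A secondary omission: before the dichotomy, the paper first passes to a minimal $\cF$-invariant analytic subspace $\Y\subseteq\X$. If $\Y$ is proper, the restricted foliation has codimension $0$ or $1$ in $\Y$, so it is cohomologically K\"ahler by \autoref{ex:extreme-cohK} or \autoref{prop:codimension-one-cK}, and one concludes by \autoref{lem:invariant-subspace-kahler}; otherwise $\X$ has no proper $\cF$-invariant subspaces, which is the setting in which the rest of the argument (in particular the appeal to \cite{DruelCodimensiontwo}) is carried out. Your write-up skips this reduction entirely, so even granting your rational-curves strategy, you would be running it under weaker hypotheses than the paper does.
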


\begin{proof}
Let $\Y$ be a minimal $\cF$-invariant analytic subspace in $\X$. Then $\cT{\cF}|_\Y$ defines a regular foliation on $\Y$ of codimension at most $2$.   If the codimension is less than two, then it is either zero or one, so $\cF|_\Y$ is cohomologically K\"ahler by \autoref{ex:extreme-cohK} or \autoref{prop:codimension-one-cK}, respectively.  Hence $\X$ is cohomologically K\"ahler by \autoref{lem:invariant-subspace-kahler}.

If the codimension of $\cF|_\Y$ is equal to two, then $\dim \Y = \dim\X$ and hence $\X = \Y$.  Thus $\X$ contains no proper invariant subspaces.
If $\omega_\X$ is pseudo-effective, then the result follows from  \autoref{P:c1=0+psef} below. Suppose from now on that $\omega_\X$ is not pseudo-effective. Applying \cite[Theorem 1.1]{DruelCodimensiontwo}, we see that one of the following holds.

\begin{enumerate}
\item There exists a $\mathbb{P}^1$-bundle structure $f \colon \X \to \Y$ over a complex projective manifold $\Y$, such that $\cF$ is everywhere transverse to $f$. In particular, $\cF$ induces a regular codimension one foliation $\cG$ on $\Y$ with $c_1(\cT{\cG})=0$.
\item There exists a smooth morphism $f\colon \X \to \Y$ onto a complex projective manifold $\Y$ of dimension $\dim \Y = \dim \X - 2$ with $c_1(\Y)=0$, and $\cF$ gives a flat holomorphic connection on $f$.
\end{enumerate}

In case (2), the result follows from \autoref{lem:direct-sum-cK}, so suppose we are in case (1). By
\autoref{prop:codimension-one-cK}, $\cG$ is cohomologically K\"ahler, and hence by \autoref{prop:main_lemma} below, there exists a foliation $\cL$ of dimension $1$ on $\Y$ such that $\cT{\X}\cong \cT{\cG}\oplus \cT{\cL}$. But then $\cT{\X}$ decomposes as a direct sum $\cT{\X} \cong \cT{\cF} \oplus \cT{f^{-1}\cL}$. Hence the claim follows from \autoref{lem:direct-sum-cK} again.
\end{proof}

\subsection{Splitting the tangent bundle when $c_1=0$}
We now describe the behaviour of cohomologically K\"ahler foliations whose first Chern class is trivial.

Let $\X$ be a compact K\"ahler manifold, and let $\cF$ be a possibly singular foliation on $\X$ of dimension $p$ with $c_1(\cT{\cF})=0$.  Then $\omega_{\cF}$ is a Hermitian flat line bundle.  Hence if $\kahcl$ is any K\"ahler form on $\X$, and $v \in \coH[0]{\X,\wedge^p\cT{\X}\otimes \omega_{\cF}}$ is a $p$-vector defining $\cF$, the class $\left[\hook{v}\gamma^p\right] \in \coH[p]{\X,\omega_{\cF}}$ has an Hermitian conjugate
\[
\alpha := \overline{\left[\hook{v}\gamma^p\right]} \in \coH[0]{\X,\forms[p]{\X}\otimes \omega_\cF^*},
\]
by Hodge symmetry for unitary local systems.
\begin{lemma}\label{lem:nonzero-contraction}
If $\cF$ is weakly cohomologically K\"ahler with respect to $\left[\gamma\right]\in \coH[1,1]{\X}$, then the contraction
\[
\hook{v}\alpha\in \coH[0]{\X,\cO{\X}}
\]
is nonvanishing.
\end{lemma}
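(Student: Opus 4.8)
The plan is to recognize that $\hook{v}\alpha$, which is \emph{a priori} only a global holomorphic function on the compact connected manifold $\X$ and hence a constant $c \in \coH[0]{\X,\cO{\X}} = \CC$, in fact computes a nonzero multiple of the $L^2$-norm of a harmonic form, and is therefore nonzero precisely under the weak cohomological K\"ahler hypothesis.

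To set this up, I fix a flat Hermitian metric on $\omega_\cF$ (which exists since $c_1(\cT{\cF})=0$), let $\beta$ denote the \emph{harmonic} representative of the class $\hook{v}\gamma^p \in \coH[p]{\X,\omega_\cF}$, so that $\alpha = \overline{\beta}$ by construction, and note that $\cF$ being weakly cohomologically K\"ahler with respect to $[\gamma]$ means exactly that $[\beta] \neq 0$, whence $\|\beta\|^2 > 0$.

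The heart of the argument is to evaluate the integral $I := \int_\X (\hook{v}\gamma^p)\wedge\alpha\wedge\gamma^{n-p}$ of the top-degree form built from the $(0,p)$-form $\hook{v}\gamma^p$ (valued in $\omega_\cF$), the $(p,0)$-form $\alpha$ (valued in $\omega_\cF^*$), and $\gamma^{n-p}$, in two different ways. First, a pointwise identity of K\"ahler linear algebra --- checked in normal coordinates at each point, and valid for every $p$-vector and every $(p,0)$-form because such forms are automatically primitive --- gives
\[
(\hook{v}\gamma^p)\wedge\alpha\wedge\gamma^{n-p} = c''\,(\hook{v}\alpha)\,\gamma^n
\]
for a nonzero constant $c''$ depending only on $n$ and $p$; since $\hook{v}\alpha = c$ is constant, this yields $I = c''\,c\int_\X\gamma^n$. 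Second, because $\hook{v}\gamma^p$ and $\beta$ represent the same class while $\alpha\wedge\gamma^{n-p}$ is $\delb$-closed (as $\alpha$ is holomorphic and $\gamma$ is closed of type $(1,1)$), Stokes' theorem lets me replace $\hook{v}\gamma^p$ by $\beta$ without changing $I$, so that $I = \int_\X \beta\wedge\overline{\beta}\wedge\gamma^{n-p}$; by Weil's formula for the Hodge star of the primitive $(p,0)$-form $\overline{\beta}$, this last integral is a strictly positive multiple of $\|\beta\|^2$.

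Comparing the two evaluations gives $c''\,c\int_\X\gamma^n = (\text{positive})\cdot\|\beta\|^2 > 0$, and since $c'' \neq 0$ and $\int_\X\gamma^n > 0$ I conclude $c = \hook{v}\alpha \neq 0$. The only genuine obstacle is the bookkeeping in the two pieces of K\"ahler linear algebra --- pinning down the pointwise contraction identity and the positivity of $\int_\X\beta\wedge\overline{\beta}\wedge\gamma^{n-p}$ with the correct nonzero constants and signs --- but conceptually everything rests on identifying the constant $\hook{v}\alpha$ with a multiple of $\|\beta\|^2$.
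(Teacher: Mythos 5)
Your proposal is correct and takes essentially the same route as the paper's proof: both evaluate $\int_\X \alpha\wedge\hook{v}\gamma^p\wedge\gamma^{n-p}$ in two ways, once by Stokes' formula (exchanging $\hook{v}\gamma^p$ for its harmonic representative $\overline{\alpha}$, giving a nonzero Hodge--Riemann/primitive-form norm under the weak cohomologically K\"ahler hypothesis) and once by the pointwise identity $\alpha\wedge\hook{v}\gamma^p\wedge\gamma^{n-p}=\binom{n}{p}^{-1}\hook{v}\alpha\cdot\gamma^n$. Since $\hook{v}\alpha$ is a holomorphic, hence constant, function and $\int_\X\gamma^n>0$, both arguments conclude identically that $\hook{v}\alpha\neq 0$.
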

\begin{proof}
The proof is similar to the arguments in \cite[Lemma 5.25]{Druel2020}.
From the definition of $\alpha$, we have $\overline{\alpha} = \hook{v}\gamma^p + \overline\partial \xi$ for some $\xi$ where $\overline{\alpha}$ is the complex conjugate of $\alpha$ with respect to the Hermitian flat structure.  A straightforward calculation using Stokes' formula and Poincar\'e duality for the unitary local system $\omega_\cF$ then implies that
\[
\int_{\X} \alpha\wedge\overline{\alpha}\wedge \kahcl^{n-p} = \int_\X \alpha\wedge \hook{v}\gamma^p \wedge \kahcl^{n-p}.
\]
The integral on the left is nonzero by the Hodge--Riemann bilinear relations, while an easy pointwise calculation shows that the integrand on the right is given by
\[
\alpha\wedge\hook{v}\kahcl^p\wedge \kahcl^{n-p} = \frac{1}{{n \choose p}}\cdot \hook{v}\alpha\cdot\kahcl^n
\]
Note that $\hook{v}\alpha$ is a holomorphic function, hence constant.  We therefore have
\[
0 \neq  {n\choose p} \int_\X \alpha\wedge\hook{v}\kahcl^p\wedge \kahcl^{n-p} = \hook{v}\alpha \cdot \int_\X \kahcl^n
\]
and we conclude that $\hook{v}\alpha$ is nonzero, hence nonvanishing, as desired.
\end{proof}

The following result is a generalization of \cite[Theorem 5.6]{Loray2018}; see also \cite[Proposition 3.1]{Druel2019} for a related result.
\begin{proposition}\label{prop:main_lemma}
Let $\X$ be a compact K\"{a}hler manifold, and let $\cF$ be a possibly singular foliation on $\X$ of dimension $p$  with $c_1(\cT{\cF})=0$. If $\cF$ is  weakly cohomologically K\"ahler,
then the following statements hold:
\begin{enumerate}
\item $\cF$ is regular,
\item There exists a foliation $\cG$ on $\X$ such that $\cT{\X}=\cT{\cF}\oplus\cT{\cG}$.
\item $\det \cT{\cF}$ is a torsion line bundle.
\end{enumerate}
\end{proposition}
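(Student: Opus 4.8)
The plan is to turn the nonvanishing scalar of \autoref{lem:nonzero-contraction} into pointwise information and then use the twisted form $\alpha$ as a splitting datum. Since $\cF$ is weakly cohomologically K\"ahler, \autoref{lem:nonzero-contraction} gives that $\hook{v}\alpha\in\coH[0]{\X,\cO{\X}}$ is a nonzero constant; after rescaling $\alpha$ I may assume $\hook{v}\alpha=1$. In particular the defining $p$-vector $v$ is nowhere vanishing, since $\hook{v}\alpha$ would vanish wherever $v$ does. On the regular locus $\X^\circ$ the section $v$ is decomposable, and decomposability is a closed condition (the Pl\"ucker relations cut out the affine cone over the Grassmannian, which contains the origin); hence a nowhere-zero holomorphic $v$ that is decomposable on the dense set $\X^\circ$ is decomposable everywhere, so it defines a rank-$p$ subbundle of $\cT{\X}$ at every point. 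This gives $\mathrm{Sing}(\cF)=\emptyset$, which is statement (1).

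For the splitting (2), I would build an explicit projection onto $\cT{\cF}$. Using the normalization $\hook{v}\alpha=1$, define a holomorphic bundle endomorphism $\rho\colon\cT{\X}\to\cT{\X}$ by $\rho(\eta)=\hook{\hook{\eta}\alpha}\,v$, where $\hook{\eta}\alpha\in\forms[p-1]{\X}\otimes\omega_\cF^*$ is contracted into $v\in\wedge^p\cT{\X}\otimes\omega_\cF$, the two twists cancelling. Choosing a local frame $\xi_1,\dots,\xi_p$ of $\cT{\cF}$ with $v=\xi_1\wedge\cdots\wedge\xi_p$, one computes $\rho(\eta)=\sum_{k}(-1)^{k-1}\alpha(\eta,\xi_1,\dots,\widehat{\xi_k},\dots,\xi_p)\,\xi_k$, which visibly lands in $\cT{\cF}$; and since $\alpha(\xi_1,\dots,\xi_p)=\hook{v}\alpha=1$ one checks $\rho|_{\cT{\cF}}=\mathrm{id}$. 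Thus $\rho$ is a holomorphic projection onto $\cT{\cF}$, and setting $\cT{\cG}:=\ker\rho$ yields a holomorphic direct sum $\cT{\X}=\cT{\cF}\oplus\cT{\cG}$ of vector bundles. Concretely $\cT{\cG}=\bigcap_{k=1}^{p}\ker\theta_k$, where the twisted $1$-forms $\theta_k:=\hook{\xi_1\wedge\cdots\widehat{\xi_k}\cdots\wedge\xi_p}\alpha\in\forms[1]{\X}\otimes\omega_\cF^*$ are pointwise linearly independent.

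It remains to see that the distribution $\cG$ is integrable, which I expect to be the main obstacle, and is the only place where involutivity of $\cF$ (as opposed to its being a mere distribution) enters. The key input is that $\alpha$ is closed: as a holomorphic $p$-form valued in the Hermitian flat line bundle $\omega_\cF^*$ it is harmonic, hence closed for the flat connection, and in a local flat frame of $\omega_\cF^*$ it becomes an honest $d$-closed $p$-form. I would then verify the Frobenius condition $d\theta_k\wedge\theta_1\wedge\cdots\wedge\theta_p=0$ by writing $\theta_k=\hook{w_k}\alpha$ with $w_k\in\wedge^{p-1}\cT{\cF}$ and expanding $d\,\hook{w_k}\alpha$ through Cartan's formula; since $\alpha$ is closed and the Lie derivatives $\lie{\xi_i}$ along the $\xi_i\in\cT{\cF}$ preserve $\cT{\cF}$ by involutivity of $\cF$, they preserve the ideal generated by $\theta_1,\dots,\theta_p$, and the unwanted terms fall into this ideal. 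The careful bookkeeping of signs, of the twist, and of the frame-dependence of the $w_k$ is the technical heart of the argument; granting it, $\cG$ is a foliation complementary to $\cF$.

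Finally, for the torsion statement (3), note that $\det\cT{\cF}=\omega_\cF^*$ is Hermitian flat because $c_1(\cT{\cF})=0$ on the compact K\"ahler manifold $\X$, with unitary monodromy character $\chi\colon\pi_1(\X)\to U(1)$. Pulling back to the universal cover trivializes $\omega_\cF$ and turns $\alpha$ into a genuine nowhere-zero closed holomorphic $p$-form on which the deck group acts through $\chi$. The assertion that $\det\cT{\cF}$ is torsion is then equivalent to finiteness of the image of $\chi$, which I would deduce from this global nonvanishing parallel datum together with the flat structure, following the arguments of \cite{Loray2018,Druel2019}. I regard this step and the integrability verification above as the two delicate points; everything else is formal once \autoref{lem:nonzero-contraction} is in hand.
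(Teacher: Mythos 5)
Your step (1) and the construction of the complement in (2) are sound and essentially identical to the paper's: your projector $\rho(\eta)=\hook{\hook{\eta}\alpha}v$ has the same kernel as the paper's contraction map $\cT{\X}\to\forms[p-1]{\cF}\otimes\omega_\cF^*$, so both produce the same holomorphic splitting $\cT{\X}=\cT{\cF}\oplus\cT{\cG}$. The genuine gap is your integrability argument for $\cG$. You propose to verify Frobenius locally using only that $\alpha$ is closed and that $\cF$ is involutive; this cannot be made to work, because the definition of $\cT{\cG}$ only forces the vanishing of the component of $\alpha$ with exactly one $\cG$-direction, while the components with two or more $\cG$-directions are completely unconstrained, and these are exactly what survive in the ``unwanted terms'' of your Cartan-formula expansion; they do not lie in the ideal $(\theta_1,\dots,\theta_p)$. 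A concrete counterexample to the local claim: on $\CC^4$ with coordinates $(x_1,x_2,y_1,y_2)$, let $\cF$ be spanned by $\partial_{x_1},\partial_{x_2}$, let $v=\partial_{x_1}\wedge\partial_{x_2}$, and set
\[
\alpha \;=\; dx_1\wedge dx_2 \;+\; x_2y_1\,dx_1\wedge dy_2 \;+\; x_1y_1\,dx_2\wedge dy_2 \;+\; x_1x_2\,dy_1\wedge dy_2 .
\]
Then $d\alpha=0$ and $\hook{v}\alpha=1$, but $\cT{\cG}=\ker\rho$ is spanned by $\partial_{y_1}$ and $\partial_{y_2}+x_1y_1\partial_{x_1}-x_2y_1\partial_{x_2}$, whose bracket $x_1\partial_{x_1}-x_2\partial_{x_2}$ is not a section of $\cT{\cG}$. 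So no bookkeeping of signs and twists will close your argument: integrability of $\cG$ is not a formal consequence of $d\alpha=0$. What the paper does instead is apply the global Hodge-theoretic input (on a compact K\"ahler manifold, a holomorphic form with values in a unitary flat line bundle is closed) a \emph{second} time, not to $\alpha$ but to the twisted $p$-form $\beta\in\coH[0]{\X,\forms[p]{\X}\otimes\omega_\cF^*}$ defining $\cG$, namely $\beta(\eta_1,\dots,\eta_p)=\alpha(\rho\eta_1,\dots,\rho\eta_p)$, which is holomorphic because the splitting is, and whose full kernel is $\cT{\cG}$. Once $d\beta=0$, involutivity follows at once from Cartan's formula. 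In the local example above this $\beta$ is holomorphic but not closed, which shows that compactness must enter at precisely this point.

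Statement (3) is also not established by your sketch. The reduction to finiteness of the image of the unitary character $\chi$ is correct, but ``deduce finiteness from the global nonvanishing parallel datum'' is not an argument, and no soft argument is known: this is the deep part of the proposition. The paper, following \cite[Theorem 5.2]{Loray2018}, proves it via cohomology jump loci: after an \'etale cover one may assume $[\omega_\cF]\in\mathsf{Pic}^0(\X)$; by Wang's theorem \cite{Wang16} (Simpson \cite{Simpson93} in the projective case) the Green--Lazarsfeld set $S=\{[\cL]\in\mathsf{Pic}^0(\X)\mid \coh[p]{\X,\cL}\ge \coh[p]{\X,\omega_\cF}\}$ is a finite union of torsion translates of subtori, and one shows $[\omega_\cF]$ is an isolated point of $S$ by extending the class $\hook{v}\kahcl^p$ to a holomorphic family via the Poincar\'e bundle, converting it by Hodge symmetry into a family of twisted $p$-forms, and wedging these against a twisted $(n-p)$-form defining $\cF$; the transversality of $\cG$ from part (2) makes the resulting sections of $\omega_\cF\otimes\cL^*$ nonzero, forcing $\cL\cong\omega_\cF$ for nearby $[\cL]$. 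None of this machinery (nor any substitute for it) appears in your outline, so (3) remains unproved as written.
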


\begin{proof}
For the first statement, note that by \autoref{lem:nonzero-contraction}, any $p$-vector defining $\cF$ is nonvanishing, and hence $\cF$ is regular.

For the second statement, note that contraction with $\alpha$ gives a morphism $\wedge^{p-1} \cT{\cF} \to \Omega_\X^1\otimes\omega_\cF^*$ such that the composition
$$\wedge^{p-1}\cT{\cF} \to \forms[1]{\X} \otimes\omega_\cF^* \to \forms[1]{\cF}\otimes\omega_\cF^*$$
is an isomorphism. The kernel of the induced map $\cT{\X} \to \forms[p-1]{\cF}\otimes\omega_\cF^*$ then defines a distribution $\cG$ such that
$\cT{\X} \cong \cT{\cF} \oplus \cT{\cG}$.  Let $\beta \in \coH[0]{\X,\Omega_\X^p\otimes\omega_\cF^*}$ be a twisted $p$-form defining $\cG$. Using the K\" ahler identities, we see that $\beta$ is closed with respect to any unitary flat connection on $\omega_\cF^*$. This easily implies that $\cG$ is involutive.

For the third statement, we follow the argument in the proof of \cite[Theorem 5.2]{Loray2018}; we detail the proof for the sake of completeness. It relies on properties of cohomology jump loci in the space of rank one local systems proved by Simpson \cite{Simpson93} for projective manifolds and by Wang \cite{Wang16} for compact K\"ahler spaces.

Replacing $\X$ by a finite \'etale cover, if necessary, we may assume without loss of generality that
$\omega_\cF \in \mathsf{Pic}^0 (\X)$. Also, recall that $\coH[0]{
\X, \Omega^{p}_\X \otimes \omega_\cF^*} \cong \coH[p]{\X, \omega_\cF}$ by Hodge symmetry with coefficients in unitary local systems.
Let $m := \coh[p]{\X, \omega_\cF}$, and consider the Green--Lazarsfeld set
	\[
	S = \{ [\mathcal L]  \in  \mathsf{Pic}^0 (\X)\, | \, \coh[p]{\X, \mathcal L} \ge m \}\ni [\omega_\cF].
	\]
Then by \cite[Theorem 1.3, Corollary 1.4]{Wang16},  $S$ is a finite union of translates of subtori by torsion points. Therefore, to prove that $\omega_\cF$ is torsion, it suffices to show that $[\omega_\cF]$ is an isolated point of $S$. Let $\Sigma \subset  \mathsf{ Pic}^0 (\X)$ be an  irreducible component of $S$ passing through $[\omega_\cF]$. Let $\mathcal P$ denote the restriction of the Poincar\'e line bundle to  $\Sigma \times \X$, and let $\pi: \Sigma \times \X \to \Sigma$ denote the projection morphism. Recall that $\mathsf{R}^p \pi_* \mathcal P$ is locally free on some open neighborhood of $[\omega_\cF]$ in $\Sigma$. As a consequence, we can extend the class in  $\coH [p]{\X,\omega_\cF}$ corresponding to $\gamma$ to a holomorphic family of nonzero cohomology classes with coefficients in line bundles $\mathcal L$ with $[\mathcal L] \in \Sigma$ sufficiently close to $[\omega_\cF]$. Then Hodge symmetry (with coefficients in local systems) gives us a family of holomorphic $p$-forms with coefficients in the dual bundles ${\mathcal L}^*$. Taking the wedge product  of these twisted $p$-forms with a twisted $(n-p)$-form defining $\mathcal F$, we obtain a  global section of $\omega_\X \otimes \det \cN{\cF} \otimes {\mathcal L}^* \cong \omega_{\cF}\otimes \mathcal{L}^*$ for any line bundle $\mathcal L$ with $[\mathcal L] \in \Sigma$ close enough to $[\omega_\cF]$,   which is nonzero since $\cG$ is everywhere transverse to $\cF$. Therefore $\mathcal L \cong \omega_\cF$ is $[\mathcal L]\in \Sigma$ is sufficiently close to $[\omega_\cF]$, as desired.
\end{proof}

Note that \autoref{conj:regular-foliation-cK} and \autoref{prop:main_lemma} together imply the following.

\begin{conjecture}
Let $\X$ be a compact K\"{a}hler manifold, and let $\cF$ be a regular foliation on $\X$ with $c_1(\cT{\cF})=0$. Then the following statements hold.
\begin{enumerate}
\item There exists a foliation $\cG$ on $\X$ such that  $\cT{\X} = \cT{\cF} \oplus \cT{\cG}$; and
\item $\det \cT{\cF}$ is a torsion line bundle.
\end{enumerate}
\end{conjecture}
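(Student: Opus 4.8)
The plan is to obtain this statement as an immediate consequence of the two results just assembled, namely \autoref{conj:regular-foliation-cK} and \autoref{prop:main_lemma}. Since $\cF$ is a \emph{regular} foliation by hypothesis, \autoref{conj:regular-foliation-cK} tells us that $\cF$ is cohomologically K\"ahler. By \autoref{def:positive-invariant-subspace}, being cohomologically K\"ahler means precisely that $\hook{v}\kahcl^p$ is nonzero for \emph{every} K\"ahler class $\kahcl$; since a compact K\"ahler manifold carries at least one such class, this in particular shows that $\cF$ is \emph{weakly} cohomologically K\"ahler.

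With this in hand, all the hypotheses of \autoref{prop:main_lemma} are met: $\X$ is compact K\"ahler, $\cF$ is a foliation with $c_1(\cT{\cF})=0$, and $\cF$ is weakly cohomologically K\"ahler. Applying that proposition directly yields conclusion (2), the splitting $\cT{\X}=\cT{\cF}\oplus\cT{\cG}$ for a suitable foliation $\cG$, together with conclusion (3), that $\det\cT{\cF}$ is a torsion line bundle. Conclusion (1) of \autoref{prop:main_lemma}, the regularity of $\cF$, is automatic in the present situation since regularity was assumed from the outset, so nothing further is needed.

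The genuine content is entirely absorbed into \autoref{conj:regular-foliation-cK}: the reduction above is purely formal, invoking only the definitions and \autoref{prop:main_lemma}. Consequently the statement holds unconditionally in exactly those cases where the cohomological K\"ahler property has already been verified for regular foliations --- for instance in codimension one (\autoref{prop:codimension-one-cK}) and, on projective manifolds, in codimension two under the assumption $c_1(\cT{\cF})=0$ (\autoref{lem:codimension-two-cK}). The main obstacle to removing the conditional hypothesis is therefore nothing other than \autoref{conj:regular-foliation-cK} itself, i.e.\ establishing that \emph{every} regular foliation on a compact K\"ahler manifold is cohomologically K\"ahler; the deduction of the splitting and torsion statements then follows mechanically as above.
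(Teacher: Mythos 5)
Your proposal is correct and matches the paper exactly: the paper presents this statement as a conditional consequence, noting that \autoref{conj:regular-foliation-cK} and \autoref{prop:main_lemma} together imply it, which is precisely your reduction (cohomologically K\"ahler $\Rightarrow$ weakly cohomologically K\"ahler since a K\"ahler class exists, then apply the proposition). Your observation that the regularity conclusion of \autoref{prop:main_lemma} is vacuous here and that all genuine content lies in \autoref{conj:regular-foliation-cK} is also how the paper frames the situation.
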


Note also that Beauville has conjectured that a splitting of the tangent bundle into complementary foliations is induced by a splitting of the universal cover as a product~\cite{Beauville2000}.  Combining that conjecture with the above, we arrive at the following.
\begin{conjecture}\label{conj:CY-Beauville}
Let $\X$ be a compact K\"ahler manifold, and let $\cF$ be a regular foliation of $\X$ with $c_1(\cT{\cF})=0$.  Then there exist possibly non-compact K\"ahler manifolds $\Y$ and $\Z$, with $\omega_\Y$ trivial, and a covering map $f : \Y \times \Z \to \X$ such that $f^{-1}{\cF}$ is induced by the projection to $\Z$.
\end{conjecture}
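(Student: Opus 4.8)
The plan is to derive this statement by combining \autoref{conj:regular-foliation-cK}, \autoref{prop:main_lemma}, and Beauville's splitting conjecture~\cite{Beauville2000}. First I would apply \autoref{conj:regular-foliation-cK} to the regular foliation $\cF$ to conclude that it is cohomologically K\"ahler, hence in particular weakly cohomologically K\"ahler. Since $c_1(\cT{\cF})=0$ by hypothesis, \autoref{prop:main_lemma} then applies and yields a (necessarily regular) complementary foliation $\cG$ with $\cT{\X}=\cT{\cF}\oplus\cT{\cG}$, together with the additional information that $\det\cT{\cF}$ is a torsion line bundle. At this point $\X$ carries a pair of transverse complementary foliations, which is precisely the hypothesis of Beauville's conjecture.

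Next I would invoke Beauville's conjecture for the decomposition $\cT{\X}=\cT{\cF}\oplus\cT{\cG}$, which asserts that a splitting of the tangent bundle into complementary involutive subsheaves is induced by a product decomposition of the universal cover. Writing $\tX\cong\Y\times\Z$ for the resulting factorization, with $\Y$ the factor of dimension $p=\dim\cF$ tangent to the lift of $\cF$, the universal covering becomes a covering map $f\colon\Y\times\Z\to\X$ under which $f^{-1}\cF$ is the foliation by the fibers $\Y\times\{z\}$ of the projection to $\Z$; that is, $f^{-1}\cF$ is induced by the projection to $\Z$. Both $\Y$ and $\Z$ are K\"ahler, since the restriction of a K\"ahler form on the (K\"ahler) universal cover $\tX$ to the slices $\Y\times\{z\}$ and $\{y\}\times\Z$ is K\"ahler.

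It remains to verify that $\omega_\Y$ is trivial. A torsion line bundle becomes trivial on a suitable finite \'etale cover of $\X$, and the universal covering $\tX\to\X$ factors through every such cover; hence the pullback of $\det\cT{\cF}$ to $\tX$ is trivial. Under the identification $\tX\cong\Y\times\Z$ this pullback is $p_\Y^*\det\cT{\Y}$, so restricting to a slice $\Y\times\{z\}$ shows that $\det\cT{\Y}\cong\omega_\Y^*$ is trivial, as required. Granting the two conjectures, this completes the argument, and the genuine difficulty lies entirely in them: both \autoref{conj:regular-foliation-cK} and Beauville's conjecture remain open in general, with Beauville's conjecture -- the passage from an abstract tangent-bundle splitting to a geometric product decomposition of the universal cover -- being by far the more serious obstacle. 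A natural unconditional route in special cases is to first establish enough positivity, such as numerical flatness of $\cT{\cF}$, so as to apply \autoref{thm:num_flat_new_main} directly, which already realizes such a product structure under that stronger hypothesis.
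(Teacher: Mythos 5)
Your proposal follows exactly the paper's own derivation: the paper presents \autoref{conj:CY-Beauville} as the combination of \autoref{conj:regular-foliation-cK} with \autoref{prop:main_lemma} (giving the complementary foliation $\cG$ and the torsion property of $\det\cT{\cF}$), followed by Beauville's conjecture~\cite{Beauville2000} to pass from the tangent-bundle splitting to a product decomposition of the universal cover. Your additional verifications --- that the factors are K\"ahler and that $\omega_\Y$ becomes trivial on the universal cover via the torsion property --- are correct details that the paper leaves implicit.
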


\section{Foliations with numerically flat tangent bundle}\label{S:proof main}

This section is mostly taken up by the proof of \autoref{thm:num_flat_main} below. Its  statement  is almost the same as the statement of \autoref{thm:num_flat_new_main} from the Introduction. The only difference is the inclusion of an extra item describing the analytic closure of minimal $\cF$-invariant analytic subspaces. Before presenting the theorem, we introduce some terminology. A compact complex manifold $\X$ is called a \emph{torus quotient} if $\X$ is a finite \'etale quotient of a complex torus $\T$. A (regular) foliation $\cF$ on a torus quotient $\X$ is said to be \emph{linear} if $f^{-1}\cF$ is a linear foliation on $\T$, where $f \colon \T \to \X$ denote the quotient map.

\begin{theorem}\label{thm:num_flat_main}
    Let $\X$ be a compact K\"{a}hler manifold, and let $\cF$ be regular foliation on $\X$ with numerically flat tangent bundle $\cT{\cF}$. Then the following hold.
    \begin{enumerate}
        \item\label{I:hermitian flat} The tangent bundle $\cT{\cF}$ is hermitian flat.
        \item \label{I:coh-kah} $\cF$ is cohomologically K\"ahler.
        \item\label{I:abundance} The line bundle $\omega_{\cF}$ is torsion.
        \item\label{I:split} There exists a foliation $\cG$ on $\X$ such that $\cT{\X} = \cT{\cF} \oplus \cT{\cG}$.
        \item\label{I:Beauville} $\cF$ is induced by a splitting of the universal cover $\widetilde{\X}$, i.e.~ $\widetilde{\X}$ is the product of $\mathbb{C}^{\dim \cF}$ with another complex manifold $\Y$ in such a way that the decomposition $\cT{\X}=\cT{\cF}\oplus \cT{\cG}$ lifts to the canonical decomposition $\cT{\widetilde{\X}}\cong
    \cT{\mathbb{C}^{\dim \cF}} \boxplus \cT{\Y}$.
        \item\label{I:minimal} Any minimal $\cF$-invariant closed analytic subspace $\Y$ is a torus quotient and $\cF|_\Y$ is a linear foliation.
        \item\label{I:closure} The analytic closure $\overline{\bL}$ of any leaf $\bL$ of $\cF$ is a quotient of an equivariant compactification of an abelian Lie group $\G_{\bL}$ and $\cF|_{\overline{\bL}}$ is induced by a (not necessarily closed) subgroup of $\G_{\bL}$.
    \end{enumerate}
\end{theorem}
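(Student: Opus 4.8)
The plan is to establish the cohomologically K\"ahler property (item~\ref{I:coh-kah}) first, and then to deduce the remaining items from \autoref{prop:main_lemma} together with the structure theory of flat tangent bundles. Since $\cT{\cF}$ is numerically flat we have $c_1(\cT{\cF})=0$, so $\cF$ will satisfy the hypotheses of \autoref{prop:main_lemma} as soon as item~\ref{I:coh-kah} is known. To prove item~\ref{I:coh-kah} I would split into two cases according to the positivity of $\omega_\X$. If $\omega_\X$ is pseudoeffective, then item~\ref{I:coh-kah} is immediate from \autoref{P:c1=0+psef}. The genuinely new case is when $\omega_\X$ is not pseudoeffective, i.e.\ $\X$ is uniruled by \cite{BDPP}; this is precisely the situation excluded in \cite{Pereiratouzet2013}, and I expect it to be the main obstacle.

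In the uniruled case I would first record the basic incompatibility between numerical flatness and rational curves: the restriction of $\cT{\cF}$ to any rational curve $C$ is numerically flat on $\PP^1$, hence trivial, so it admits no nonzero map from $\cT{\PP^1}\cong\cO{\PP^1}(2)$; consequently no rational curve is tangent to $\cF$, and in particular the leaves contain no rational curves. I would then reduce, via \autoref{lem:invariant-subspace-kahler}, to proving that $\cF|_\Y$ is cohomologically K\"ahler for a minimal $\cF$-invariant closed analytic subspace $\Y$, which has the property that every leaf it contains is dense. If such a dense leaf is already closed (so $\dim\Y=\dim\cF$ and $\cF|_\Y$ is the full tangent foliation) then $\cF|_\Y$ is cohomologically K\"ahler by \autoref{ex:extreme-cohK}. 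The remaining, decisive possibility is that of a non-closed dense leaf; here the aim is to exploit the flat structure carried by the numerically flat bundle $\cT{\cF}$ to identify $\Y$ as a torus quotient on which $\cF$ restricts to a linear foliation, so that \autoref{ex:torus-quotient-kahler} applies. Carrying this out -- in the presence of the rulings of $\X$ transverse to $\cF$, and possibly by induction on $\dim\X$ in the spirit of the reduction used in \autoref{lem:codimension-two-cK} -- is the technical heart of the proof.

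Once item~\ref{I:coh-kah} is in hand, applying \autoref{prop:main_lemma} to $\cF$ (which is then weakly cohomologically K\"ahler with $c_1(\cT{\cF})=0$) yields both the splitting $\cT{\X}=\cT{\cF}\oplus\cT{\cG}$ of item~\ref{I:split} and the fact that $\omega_\cF=\det\cT{\cF}^*$ is torsion, giving item~\ref{I:abundance}. Next I would use that a numerically flat bundle is flat \cite{Simpson92} to equip $\cT{\cF}$, and hence each leaf, with a holomorphic affine structure whose completeness, as in \cite{Amoros,Pereiratouzet2013}, uniformizes every leaf by $\CC^{p}$; combined with the splitting $\cT{\X}=\cT{\cF}\oplus\cT{\cG}$, the method of those papers (which verifies the flat case of Beauville's splitting conjecture \cite{Beauville2000}) then yields the product decomposition $\tX\cong\CC^{p}\times\Y$ of item~\ref{I:Beauville}, compatibly with the tangent-bundle splitting.

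For the hermitian flatness of $\cT{\cF}$ (the first item), I would pass to a finite \'etale cover on which $\omega_\cF$ is trivial -- possible by item~\ref{I:abundance} -- so that $\cT{\cF}$ becomes numerically flat with $c_1=c_2=0$ and trivial determinant; it then suffices to show that $\cT{\cF}$ is polystable, after which the Kobayashi--Hitchin correspondence (Bando--Siu) produces a flat unitary metric. Here the torus-quotient description of leaf closures, in which $\cF$ is linear and the linear holonomy acts by translations, is what forces the Demailly--Peternell--Schneider filtration of \autoref{thm:numerically_flat} to split orthogonally. Finally, items~\ref{I:minimal} and~\ref{I:closure} follow by transporting the analysis to the analytic closure of a leaf: the flat structure together with density of the leaf exhibits $\overline{\bL}$, and more generally any minimal $\cF$-invariant subspace, as a (finite \'etale quotient of an) equivariant compactification of an abelian Lie group $\G_{\bL}$ with $\cF$ induced by a possibly non-closed subgroup, exactly as in \cite{Amoros,Pereiratouzet2013}. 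The single hardest point remains the uniruled case of item~\ref{I:coh-kah}, where neither pseudoeffectivity of $\omega_\X$ nor a priori compactness of leaf closures is available.
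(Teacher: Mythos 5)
Your proposal inverts the paper's logical order, and that inversion is where it breaks down. The paper proves item~\ref{I:hermitian flat} (hermitian flatness) \emph{first}, by a bootstrap: \autoref{thm:numerically_flat} supplies a hermitian flat subbundle $\cT{1}\subseteq\cT{\cF}$ of maximal rank $\geq 1$, and a case analysis (involutive case; trivial case via Lieberman's theory; finite versus infinite image of the induced representation $\rho_1$ into $\G/\mathrm{Rad}(\G)$, handled via the Shafarevich morphism of \cite{CCE15} and induction on $\dim\X$) forces $\cT{1}=\cT{\cF}$ (\autoref{L:numerically_flat_versus_hermitian_flat}). Only then does it deduce that minimal invariant subspaces are torus quotients with linear foliations (\autoref{L:minimal_invariant_hermitian_flat}), hence item~\ref{I:coh-kah}, hence items~\ref{I:abundance} and~\ref{I:split} via \autoref{prop:main_lemma}. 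You instead try to prove item~\ref{I:coh-kah} first, and in the uniruled case (the genuinely new case, correctly identified) your plan is to ``exploit the flat structure\dots to identify $\Y$ as a torus quotient on which $\cF$ restricts to a linear foliation'' --- but you explicitly leave this as ``the technical heart'' without an argument. This is not a routine verification: mere numerical flatness gives only a flat (generally non-unitary) structure via \cite{Simpson92}, so the tools that make the torus-quotient identification work --- unitary holonomy, boundedness and hence completeness of flat sections, the Shafarevich argument for unitary representations --- are unavailable until hermitian flatness is known. The whole point of the paper's ordering is to manufacture that unitary structure before attempting any structure theory of invariant subspaces.

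Your route to item~\ref{I:hermitian flat} has the same problem in a sharper form. You propose polystability plus Kobayashi--Hitchin, but polystability is precisely what fails for general numerically flat bundles: the filtration of \autoref{thm:numerically_flat} need not split (the paper's remark after it, citing \cite[Example 1.7]{DPS94}, records that numerically flat does not imply hermitian flat), so polystability is the entire content of the claim, not a reduction of it. Your justification --- that ``the torus-quotient description of leaf closures\dots forces the Demailly--Peternell--Schneider filtration to split orthogonally'' --- is circular in your own ordering, since items~\ref{I:minimal} and~\ref{I:closure} are established only afterwards, and there ``exactly as in \cite{Amoros,Pereiratouzet2013}'', i.e.\ by results whose hypotheses (trivial tangent bundle, or hermitian flatness/non-uniruledness) are what you are trying to prove. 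The same issue infects your item~\ref{I:Beauville}: completeness of the leafwise affine structure requires the unitary metric, not just flatness, and in your ordering item~\ref{I:hermitian flat} is not yet available when you invoke it. In short, the correct skeleton is the paper's: prove hermitian flatness directly from numerical flatness by the maximal-flat-subbundle bootstrap, and let everything else follow; your proposal postpones exactly the steps that cannot be postponed.
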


The rest of this section is devoted to the proof of this theorem.  Hence for the rest of this section we adopt the following assumptions:
\begin{itemize}
\item $\X$ is a compact K\"ahler manifold
\item $\cF$ is a regular foliation
\item The dimension $p = \dim \cF$ is positive
\item $\Y \subseteq \X$ is a minimal $\cF$-invariant closed analytic subspace.
\end{itemize}
Here, by minimal, we mean ``minimal with respect to inclusion''.   Note that since  $\cF$ is regular, the singular locus of an invariant subspace is also invariant, and hence $\Y$ is automatically smooth.

For clarity, we break the proof of \autoref{thm:num_flat_main} into several lemmas which address various implications.   We first explain how the first statement (that $\cT{\cF}$ is hermitian flat) can be used to deduce the others, and later show that this first statement does indeed hold (\autoref{L:numerically_flat_versus_hermitian_flat}).

\begin{lemma}[\ref{I:hermitian flat} $\implies $ \ref{I:minimal}]\label{L:minimal_invariant_hermitian_flat}
    If $\cT{\cF}$ is hermitian flat, then any  minimal $\cF$-invariant closed analytic subspace $\Y$ is a torus quotient, and $\cF|_\Y$ is a linear foliation.
\end{lemma}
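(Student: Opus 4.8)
The plan is to restrict everything to $\Y$ and exploit the flat unitary structure on $\cT{\cF}|_\Y$. Since $\Y$ is a minimal $\cF$-invariant subspace, every leaf $\bL$ of $\cF|_\Y$ has analytic closure $\overline{\bL}=\Y$; moreover $\cT{\cF}|_\Y$ is again hermitian flat, hence determined by a unitary representation $\rho\colon\pi_1(\Y)\to U(p)$. First I would pass to the universal cover $u\colon\tY\to\Y$, where the flat unitary connection trivializes: there is a global holomorphic, $\nabla$-parallel, orthonormal frame $e_1,\dots,e_p$ of $u^*\cT{\cF}$, and because the $(0,1)$-part of the flat connection is $\overline\partial$, the $e_i$ are holomorphic vector fields on $\tY$ tangent to the pulled-back foliation $u^{-1}\cF$, pointwise spanning its tangent space.

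The technical heart is to show that the structure constants are constant. Writing $[e_i,e_j]=\sum_k c_{ij}^k e_k$ with $c_{ij}^k\in\cO{\tY}$, naturality of the Lie bracket under the deck transformations shows that the tuple $(c_{ij}^k)$ transforms according to $\rho$ and so descends to a global holomorphic section $c$ of the hermitian flat bundle $W=(\cT{\cF}^*)^{\otimes 2}\otimes\cT{\cF}$ on $\Y$. Since $\Y$ is compact Kähler and $W$ is hermitian flat, the function $\|c\|^2$ is plurisubharmonic (its $\sqrt{-1}\partial\overline\partial$ equals $\sqrt{-1}\langle\nabla'c,\nabla'c\rangle\ge0$ because the curvature vanishes), hence constant, which forces $\nabla'c=0$; a parallel section has constant coefficients in the parallel frame, so the $c_{ij}^k$ are genuine constants. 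Consequently $\mathfrak{g}:=\operatorname{span}_{\CC}(e_1,\dots,e_p)$ is a $p$-dimensional complex Lie algebra of holomorphic vector fields trivializing $u^*\cT{\cF}$, and $\pi_1(\Y)$ acts on $\mathfrak g$ by unitary Lie-algebra automorphisms via $\rho$.

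From here I would integrate this infinitesimal data to a global group action. Pulling back the Kähler metric of $\Y$ makes $\tY$ complete with uniformly bounded geometry, and comparing it with the flat metric (both descend to $\Y$, hence have bounded ratio on the compact $\Y$) shows the $e_i$ have bounded norm and bounded covariant derivatives; this completeness lets $\mathfrak g$ integrate to a holomorphic action of the simply connected complex Lie group $\G$ with $\operatorname{Lie}\G=\mathfrak g$, whose orbits are the leaves of $u^{-1}\cF$ and which is normalized by $\pi_1(\Y)$. Using that the leaves are dense ($\overline{\bL}=\Y$), the closure of the resulting holonomy inside the compact group $\overline{\rho(\pi_1(\Y))}\subseteq U(p)$ acts transitively, realizing $\Y$ as a compact homogeneous Kähler manifold. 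By the Borel–Remmert theorem such a manifold is a product of a complex torus and a rational homogeneous (flag) manifold; since $\cT{\cF}$ is hermitian flat, it has vanishing Chern classes and cannot project nontrivially to the flag factor (whose tangent directions are positive along rational curves, incompatible with a degree-zero subbundle), so $\cF$ is tangent to the torus factor. This identifies $\Y$ as a finite étale quotient of a complex torus with $\cF|_\Y$ the linear foliation cut out by the constant subbundle $\mathfrak g$, as desired.

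The main obstacle is the last paragraph: passing from the tangentially homogeneous, infinitesimal picture (a parallel frame with constant structure constants) to the global conclusion. The two delicate points are, first, the completeness and integrability of the $e_i$ on the noncompact cover $\tY$ together with the promotion of a dense $\G$-orbit to genuine homogeneity of $\Y$ — this is exactly the content of the structure theory of tangentially homogeneous foliations with dense leaves, in the spirit of \cite{Lieberman78,Amoros}; and second, ruling out a nonabelian flag contribution, for which the hermitian flatness of $\cT{\cF}$ is the essential input via Borel–Remmert. Once homogeneity is in hand, the abelian structure and linearity of $\cF|_\Y$ are forced by flatness, completing the proof.
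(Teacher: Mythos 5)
Your first two steps are correct and even elegant: the parallel holomorphic frame $e_1,\dots,e_p$ on the universal cover, the descent of the structure functions $c_{ij}^k$ to a holomorphic section of the hermitian flat bundle $\wedge^2\Omega^1_\cF\otimes\cT{\cF}$, and the Bochner argument forcing that section to be parallel (hence the $c_{ij}^k$ constant) are all sound, as is completeness of the $e_i$ via bounded norm on a complete cover. The gap is exactly where you flag it, and it is not a technicality that the cited literature fills in: nothing in your construction acts on $\Y$. The group $\G$ integrating $\mathfrak g$ acts on $\tY$, not on $\Y$; the closure $\overline{\rho(\pi_1(\Y))}\subseteq \U(p)$ acts on the Lie algebra $\mathfrak g$, not on $\Y$; and the group generated by $\G$ and the deck group has orbits equal to preimages of leaves, hence of dimension $p$, so no enlargement of it can act transitively on a $\dim\Y$-dimensional space. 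Density of a leaf does not promote to transitivity of any group: taking "the closure" of a group action requires an ambient finite-dimensional Lie group, which one only has for $\Aut{\Y}_0$ on the \emph{compact} space $\Y$ — and the vector fields $e_i$ descend to $\Y$ only when the monodromy $\rho$ is trivial. Your appeal to the "structure theory of tangentially homogeneous foliations with dense leaves" in the spirit of \cite{Lieberman78,Amoros} is essentially circular: those results concern foliations with \emph{trivial} tangent bundle, and extending them to merely hermitian flat $\cT{\cF}$ is precisely the content of \autoref{L:minimal_invariant_hermitian_flat}.

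Concretely, what your proposal never confronts is the possibility of infinite monodromy, and this is where the paper does its real work. The paper's proof first reduces (Selberg's lemma plus finite \'etale covers) to the case where the image of $\rho_1\colon \pi_1(\X)\to \G/\mathrm{Rad}(\G)$ is torsion free, then \emph{rules out} the infinite case using the $\rho_1$-Shafarevich morphism of \cite{CCE15} together with the fact that a variety of general type cannot be covered by entire curves \cite{Kobayashi98} (the entire curves being the projected orbits of the complete flat sections); the remaining solvable case splits $\cT{\cF}$ into flat line bundles and reduces, by induction on dimension and \autoref{prop:main_lemma}, to the case $\cT{\cF}\cong\cO{\X}^{\oplus p}$. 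Only in that trivial-bundle case does one get a group acting on $\Y$ itself, and even there transitivity comes \emph{not} from density of leaves but from Lieberman's structure theory of $\Aut{\Y}_0$ and Sommese's fixed point theorem: the linear algebraic part $\N_0$ of the closure $\mathsf{H}$ of the flow group fixes a point $x$, so $\mathsf{H}\cdot x$ is a compact $\cF$-invariant subvariety, which equals $\Y$ by minimality. Some argument of this type (or a genuine substitute) is indispensable; without it, the jump from a dense leaf to a homogeneous space, and thence to Borel--Remmert, does not go through.
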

\begin{proof}
    By assumption, $\cT{\cF}$  is given by a unitary representation
        $$\rho\colon \pi_1(\X) \to \U(p).$$
    In particular, $\cT{\cF}|_\Y$ is hermitian flat as well. Replacing $\X$ by $\Y$, we may therefore assume without loss of generality that there is no proper minimal $\cF$-invariant analytic subspace in $\X$.  We now break the proof into several cases.

    \medskip

    \textbf{Case 1: $\cT{\F}$ is trivial.}  Suppose that $\cT{\cF}\cong \cO{\X}^{\oplus p}$.  We claim that $\coH[0]{\X,\cT{\cF}}\subseteq \coH[0]{\X,\cT{\X}}$ is an abelian Lie subalgebra. Indeed, let $v_i \in \coH[0]{\X,\cT{\cF}}$ for $i\in\{1,2\}$. Then $[v_1,v_2]$ induces the zero flow on the Albanese torus $\A$ of $\X$. Thus, by \cite[Theorem 3.14]{Lieberman78}, the vector field $[v_1,v_2]$ has at least one zero. On the other hand, $[v_1,v_2] \in \coH[0]{\X,\cT{\cF}}$ since $\cT{\cF}$ is involutive. Since any vector field tangent to $\cF$ has empty vanishing locus by assumption, we must have $[v_1,v_2]=0$, proving our claim.

    Let $\mathsf{H} \subseteq \Aut{\X}_0$ be the analytic closure of the complex Lie subgroup exponentiating the Lie algebra $\coH[0]{\X,\cT{\cF}}$. Note that $\mathsf{H}$ is an abelian complex Lie group. By \cite[Theorems 3.3, 3.12 and 3.14]{Lieberman78}, there is an exact sequence
    \[
        \begin{tikzcd}
        1 \arrow[r] & \N \arrow[r] & \mathsf{H} \arrow[r] & \T \arrow[r] & 1
        \end{tikzcd}\label{eq:lieberman-exact}
    \]
    where $\N$ a closed subgroup whose Lie algebra is contained in the Lie algebra of holomorphic vector fields with nonempty zero locus, and $\T$ is a compact complex torus. Moreover, $\N$ is a commutative linear algebraic group.
    By \cite[Proposition, p. 53]{Sommese73}, the neutral component $\N_0$ of $\N$ has a fixed point $x$ on $\X$ whose stabilizer in $\mathsf{H}$ is denoted by $\mathsf{H}_x$. Then
    $\mathsf{H}\cdot x$ is a compact analytic subvariety which is invariant under $\cF$, and hence
    $\mathsf{H}\cdot x = \X$. This immediately implies that $\X$ is the quotient of the torus $\mathsf{ H}/\N_0$ by the finite group $\mathsf{ H}_x/\N_0$, and that $\cF$ is a linear foliation on $\X$.

    \medskip

\textbf{Case 2: $\cF$ is one-dimensional.}  Let $a\colon \X \to \A$ be the Albanese morphism, and let $q(\X) = \dim \A$ be the irregularity of $\X$.  If $q(\X) = 0$, then $\cT{\cF}$ is a torsion line bundle, and by passing to a finite cover we reduce to the case treated in Step 1.

    Thus suppose $q(\X)>0$. If $\cT{\cF}$ is tangent to any fiber of $a$, then this fiber is a proper $\cF$-invariant subvariety contained in $\X$, yielding a contradiction. Therefore, the composition $\cT{\cF} \to \cT{\X} \to a^*\A$ is nonvanishing, and hence $\cT{\cF}\cong \cO{\X}$. So by Step 1 again, we deduce that $\X$ is a torus quotient, and that $\cF$ a linear foliation.

\medskip

\textbf{Intermezzo.} To formulate the remaining cases, we need to introduce some additional notation and simplifications. Let $\G \subseteq \textup{GL}(p,\mathbb{C})$ be the Zariski closure of $\rho(\pi_1(\X))$. This is a linear algebraic group which has finitely many connected components. Applying Selberg's Lemma and passing to an appropriate finite \'etale cover of $\X$, we may assume without loss of generality that $\G$ is connected, and that the image of the induced representation
        $$\rho_1 \colon \pi_1(\X) \to \G \to \G/\textup{Rad}(\G)$$
    is torsion-free, where $\textup{Rad}(\G)$ denotes the radical of $\G$.

\medskip

\textbf{Case 3: $\rho_1(\pi_1(\X))$ is infinite}.  We claim that this case cannot occur.  To see this, let
        $$f\colon \X \to \Z$$
be the $\rho_1$-Shafarevich morphism. We refer to \cite[Definition 2.13]{CCE15} for this notion and to \cite[Th\'eor\`eme 1]{CCE15} for its existence. Moreover (\textit{loc. cit.}), as we are assuming that  $\rho_1(\pi_1(\X))$ is torsion free, we may assume that $\Z$ is a normal projective variety of general type. Note that $\dim \Z>0$ since $\rho_1(\pi_1(\X))$ is infinite.

    Observe that $\cF$ is not tangent to the fibers of $f$ since there is no proper $\cF$-invariant analytic subspace in $\X$. Note further that flat sections of the unitary vector bundle $\cT{\cF}$ lift to holomorphic vector fields on the universal covering of $\X$ that have bounded norm, and are therefore complete. The orbits of these vector fields give rise by projection to entire curves covering  $\X$ as well as $\Z$.  But according to \cite[Theorem 7.4.7]{Kobayashi98} varieties of general type cannot be covered by entire curves, a contradiction.  Hence $\rho_1(\pi_1(\X))$ cannot be infinite.

\medskip
    \textbf{Case 4: $\rho_1(\pi_1(\X))$ is finite.}
    If $\rho_1(\pi_1(X))$ finite, then it is trivial, since it is torsion-free by construction. It follows that $\G$ is solvable. Since $\rho$ is unitary and $\G$ is connected, we deduce that $\rho$ splits as a direct sum of rank one representations. Thus $\cT\cF\cong \cF_1 \oplus \cdots \oplus \cF_p$ where $\dim \cF_i=1$, and $c_1({\cF_i})=0$.

     We now proceed by induction on $\dim \X$.   If $\dim \X = 1$, then $\cT{\cF}=\cT{\X}$ and hence $c_1(\X)=c_1(\cF)=0$, so that $\X$ is an elliptic curve and the statement holds.

     Now suppose that $\dim\X > 1$.  If there exists $i$ such that $\cF_i$ has no proper $\cF_i$-invariant subspace, then by Step 2, $\X$ is a torus quotient and $\cF_i$ is a linear foliation.  The remaining foliations $\cF_j$ are then also linear: indeed, since the bundles $\cT{\cF_j}$ are flat, their first Chern classes are zero and hence they must correspond to trivial summands of the trivial bundle $\cT{\X}$.

Otherwise, $\cF_i$ has an invariant subspace $\Y_i$ for every $i$.  By the induction hypothesis, $\Y_i$ is a torus quotient, and hence $\cF_i|_{\Y_i}$ is cohomologically K\"ahler by \autoref{ex:torus-quotient-kahler}, so that $\cF_i$ itself is cohomologically K\"ahler by \autoref{lem:invariant-subspace-kahler} and thus $\cT{\cF_i}=\det\cT{\cF_i}$ is a torsion line bundle by \autoref{prop:main_lemma}.  Hence by passing to a finite \'etale cover, we may assume that $\cT{\cF_i}$ is trivial for all $i$, and hence $\cT{\cF}$ itself is trivial.  The result therefore follows from Case 1.
\end{proof}

Since every linear foliation on a torus quotient is cohomologically K\"ahler (\autoref{ex:torus-quotient-kahler}), and it suffices to check the latter condition on submanifolds (\autoref{lem:invariant-subspace-kahler}), we deduce the following.
\begin{corollary}[\ref{I:hermitian flat} $\implies$ \ref{I:coh-kah}]
If $\cT{\cF}$ is hermitian flat, then $\cF$ is cohomologically K\"ahler.
\end{corollary}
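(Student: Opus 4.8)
The final statement to prove is the corollary asserting the implication \ref{I:hermitian flat} $\implies$ \ref{I:coh-kah}: if $\cT{\cF}$ is hermitian flat, then $\cF$ is cohomologically K\"ahler. Reading the one-line proof already given, the strategy is entirely transparent and rests on combining three results established earlier in the excerpt.

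\medskip

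The plan is to reduce the cohomological K\"ahler property to a statement about minimal invariant subspaces, where strong structural information is available. First I would invoke \autoref{L:minimal_invariant_hermitian_flat} (the implication \ref{I:hermitian flat} $\implies$ \ref{I:minimal}), which under the hermitian flatness hypothesis identifies any minimal $\cF$-invariant closed analytic subspace $\Y \subseteq \X$ as a torus quotient on which $\cF|_\Y$ restricts to a linear foliation. Such a minimal $\Y$ exists because $\X$ itself is $\cF$-invariant and the collection of nonempty closed invariant analytic subspaces is closed under taking irreducible components and hence admits minimal elements with respect to inclusion (as noted in the running assumptions, minimality is with respect to inclusion, and $\Y$ is automatically smooth since $\cF$ is regular).

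\medskip

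The second step is to apply \autoref{ex:torus-quotient-kahler}, which tells us that a linear foliation on a torus quotient is cohomologically K\"ahler; this gives that $\cF|_\Y$ is cohomologically K\"ahler. The final step is to transport this property from the invariant submanifold $\Y$ back up to the ambient $\X$ by means of \autoref{lem:invariant-subspace-kahler}, which states precisely that if $\Y \subseteq \X$ is a closed $\cF$-invariant submanifold and $\cF|_\Y$ is cohomologically K\"ahler, then $\cF$ itself is cohomologically K\"ahler. Chaining these three implications yields the corollary.

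\medskip

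I do not expect any serious obstacle here, since all the substantive work has been done in establishing \autoref{L:minimal_invariant_hermitian_flat}, \autoref{ex:torus-quotient-kahler}, and \autoref{lem:invariant-subspace-kahler}; the corollary is purely a matter of stringing them together. The only point requiring a moment's care is the existence of a minimal invariant subspace, but this is immediate from the finiteness of the dimensions involved and the fact that $\X$ is itself invariant, so the descending chain of invariant subspaces must terminate. Thus the proof is essentially the observation recorded just before the corollary statement.
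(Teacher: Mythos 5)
Your proof is correct and follows exactly the paper's route: the paper deduces this corollary in one line by combining \autoref{L:minimal_invariant_hermitian_flat} (a minimal invariant subspace is a torus quotient with linear restricted foliation), \autoref{ex:torus-quotient-kahler}, and \autoref{lem:invariant-subspace-kahler}. Your additional remark on the existence of a minimal $\cF$-invariant subspace is a harmless filling-in of a detail the paper leaves implicit in its running setup.
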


Hence by \autoref{prop:main_lemma} we have the following.
\begin{corollary}[\ref{I:hermitian flat} $\implies$ \ref{I:abundance} and \ref{I:split}] \label{C:existence_complementary_hermitian_flat}
If $\cT{\cF}$ is hermitian flat, then $\omega_\cF$ is a torsion line bundle, and there exists a regular foliation $\cG$ on $\X$ such that
    $\cT{\X} = \cT{\cF} \oplus \cT{\cG}$.
\end{corollary}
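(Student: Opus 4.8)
The plan is to obtain the corollary as an immediate application of \autoref{prop:main_lemma}, so that the only real task is to verify its two hypotheses. First I would observe that a hermitian flat bundle is in particular flat, and hence has vanishing rational Chern classes; in particular $c_1(\cT{\cF}) = 0$, which is the numerical hypothesis of \autoref{prop:main_lemma}. Second, the corollary proved just above shows that hermitian flatness of $\cT{\cF}$ already forces $\cF$ to be cohomologically K\"ahler, and hence in particular weakly cohomologically K\"ahler. Thus both hypotheses of \autoref{prop:main_lemma} are in place.

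Applying that proposition, I would read off directly that there is a foliation $\cG$ with $\cT{\X} = \cT{\cF} \oplus \cT{\cG}$, which is item \ref{I:split}. To justify the word \emph{regular} in the statement, I would note that since $\cF$ is regular, $\cT{\cF}$ is a subbundle of $\cT{\X}$, so the complementary summand $\cT{\cG}$ is also a subbundle and $\cG$ is therefore regular. The proposition also yields that $\det\cT{\cF}$ is a torsion line bundle; as $\omega_\cF = \det\Omega^1_\cF = (\det\cT{\cF})^*$ is its dual, $\omega_\cF$ is torsion as well, giving item \ref{I:abundance}.

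I do not expect any genuine obstacle at this stage: all of the substantive work — notably the input from the theory of cohomology jump loci of rank-one local systems used by Simpson and Wang — is already packaged inside \autoref{prop:main_lemma}, and the passage from hermitian flatness to the cohomologically K\"ahler condition is supplied by the preceding corollary. The only points that require care are the two hypothesis checks above, each of which follows at once from the hermitian-flat assumption, so the corollary is essentially a bookkeeping step recording \ref{I:abundance} and \ref{I:split} as consequences of \ref{I:hermitian flat}.
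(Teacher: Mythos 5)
Your proposal is correct and follows exactly the paper's route: the paper likewise deduces this corollary by combining the preceding corollary (\ref{I:hermitian flat} $\implies$ \ref{I:coh-kah}) with \autoref{prop:main_lemma}, the only difference being that you spell out the hypothesis checks ($c_1(\cT{\cF})=0$ from flatness, regularity of $\cG$ from the bundle splitting) that the paper leaves implicit.
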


\begin{lemma}[\ref{I:hermitian flat} $\implies$ \ref{I:Beauville}]
If $\cT{\cF}$ is Hermitian flat, then $\cF$ is induced by a splitting of the universal cover $\widetilde{\X}$.
\end{lemma}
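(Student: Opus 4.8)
The plan is to pass to the universal cover, trivialise $\cT{\cF}$ there by complete commuting vector fields, and then use the complementary foliation $\cG$ (which exists by \autoref{C:existence_complementary_hermitian_flat}) to exhibit the product decomposition. First I would pull everything back along the universal covering $\pi\colon\tX\to\X$. Since $\cT{\cF}$ is hermitian flat it is given by a unitary representation $\rho\colon\pi_1(\X)\to\U(p)$, so $\pi^{-1}\cF$ has trivial flat tangent bundle and admits a global parallel holomorphic frame $v_1,\dots,v_p$ of everywhere $\CC$-linearly independent vector fields tangent to $\pi^{-1}\cF$, satisfying $\gamma^*v_i=\sum_j\rho(\gamma)_{ij}v_j$ for $\gamma\in\pi_1(\X)$. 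Each $v_i$ has constant norm for the pulled-back hermitian metric; as the pullback to $\tX$ of a K\"ahler metric on the compact manifold $\X$ is complete, every $v_i$ is a complete vector field.

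Next I would show that the $v_i$ commute. As $\pi^{-1}\cF$ is involutive we may write $[v_i,v_j]=\sum_k f_{ijk}v_k$ with $f_{ijk}\in\coH[0]{\tX,\cO{\tX}}$, and assemble these into a holomorphic map $\Phi=(f_{ijk})\colon\tX\to W:=\wedge^2(\CC^p)^*\otimes\CC^p$. Because the frame transforms by the unitary matrices $\rho(\gamma)$, the induced $\pi_1(\X)$-action on $W$ is unitary, so $|\Phi|^2$ descends to $\X$; being plurisubharmonic on the compact K\"ahler manifold $\X$ it is constant, and then $\del\delb|\Phi|^2\ge 0$ forces $\Phi$ itself to be constant. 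Hence the $v_i$ span a $p$-dimensional Lie algebra with constant structure constants $c_{ijk}$. To see that these vanish I would invoke \autoref{L:minimal_invariant_hermitian_flat}: a minimal $\cF$-invariant subspace $\Y_0$ is a torus quotient on which $\cF$ is linear, so over a torus $\T$ covering $\Y_0$ the bundle $\cT{\cF}$ is spanned by commuting translation-invariant fields $w_i$ and is in particular holomorphically trivial; since it is also flat unitary, its $\pi_1(\T)$-monodromy must be trivial. The frame $v_i$ therefore descends to $\T$, where two global holomorphic frames of the trivial bundle over the compact torus differ by a matrix in $\coH[0]{\T,\mathrm{GL}_p(\cO{\T})}=\mathrm{GL}_p(\CC)$, i.e.\ by a constant matrix; as the $w_i$ commute, so do the $v_i$, giving $c_{ijk}=0$.

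The commuting complete fields $v_1,\dots,v_p$ now integrate to a holomorphic action $\CC^p\times\tX\to\tX$ which is locally free (the $v_i$ are everywhere independent) and whose orbits are precisely the leaves of $\pi^{-1}\cF$. Fixing a point $x_0$ and letting $\Y$ be the leaf of $\pi^{-1}\cG$ through $x_0$, I would consider $F\colon\CC^p\times\Y\to\tX$, $F(t,y)=t\cdot y$. Since $\cT{\tX}=\cT{\pi^{-1}\cF}\oplus\cT{\pi^{-1}\cG}$ and the flow of each $v_i$ preserves $\pi^{-1}\cF$, the differential of $F$ is everywhere an isomorphism, so $F$ is a local biholomorphism.

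The genuinely delicate step is to upgrade $F$ to a global biholomorphism, and I expect this to be the main obstacle: for general complementary foliations such a splitting can fail (this is the content of the still-open \autoref{conj:CY-Beauville}). The extra input that rescues the argument here is the completeness of the $v_i$, which makes $\pi^{-1}\cF$ a complete Ehresmann connection transverse to $\pi^{-1}\cG$; this allows one to lift paths and conclude that $F$ is a covering map. As $\tX$ is simply connected, $F$ is then a biholomorphism, the orbits (being the $\CC^p$-factor) are simply connected so that the action is in fact free, and the resulting identification $\tX\cong\CC^p\times\Y$ is compatible with the decomposition $\cT{\X}=\cT{\cF}\oplus\cT{\cG}$, as required.
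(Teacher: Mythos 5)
Your preliminary steps are essentially correct: the parallel frame and completeness of the $v_i$, the plurisubharmonicity argument making the structure functions constant, the use of the minimal invariant torus quotient (legitimate here, since that lemma precedes this one in the paper) to kill those constants, and the verification that $F\colon\CC^p\times\Y\to\tX$ is a local biholomorphism. The gap is the final step, which you rightly flag as the delicate one but then settle by assertion: that completeness of the $v_i$ makes $\pi^{-1}\cF$ a complete Ehresmann connection for $\pi^{-1}\cG$, whence path lifting and the covering property. This does not follow. Lifting a $\pi^{-1}\cG$-path through the $\CC^p$-action amounts to solving an ODE whose right-hand side involves the derivatives of the action maps $x\mapsto t\cdot x$ in the $\cT{\cG}$-directions; the action preserves the frame of $\cT{\cF}$ (the $v_i$ commute) but can distort transverse vectors at a rate that grows with $|t|$, so the lift can escape to infinity in finite time, and neither completeness nor compactness of the quotient prevents this. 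A concrete counterexample to your mechanism: let $\X=\Gamma\backslash\mathrm{SL}_2(\CC)$ with $\Gamma$ a cocompact lattice, let $\cF$ be the foliation spanned by the left-invariant field $f$ and $\cG$ the left-invariant foliation spanned by $h,e$ (where $[h,e]=2e$, $[h,f]=-2f$, $[e,f]=h$). Then $\cT{\cF}\cong\cO{\X}$ is hermitian flat, $\cT{\X}=\cT{\cF}\oplus\cT{\cG}$, all fields are complete, and $\tX=\mathrm{SL}_2(\CC)$ is simply connected; yet your map becomes $\CC\times\mathsf{B}\to\mathrm{SL}_2(\CC)$, $(t,b)\mapsto b\exp(tf)$, with $\mathsf{B}$ the Borel subgroup (the $\cG$-leaf through the identity), which is a biholomorphism onto the proper dense open big Bruhat cell and in particular not a covering map. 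Every ingredient your last step uses is present here; what is absent is the K\"ahler hypothesis, which your argument never invokes after the commutativity step. This is exactly why \autoref{conj:CY-Beauville} requires the K\"ahler assumption.

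The missing ingredient --- and, in substance, what the argument of \cite[Theorem A]{Pereiratouzet2013} cited by the paper supplies --- is Hodge-theoretic. The projection of $\cT{\X}$ onto $\cT{\cF}$ along $\cT{\cG}$ is a holomorphic $1$-form $\theta$ with values in the hermitian flat bundle $\cT{\cF}$; by the K\"ahler identities for unitary local systems (the same fact used in the proof of \autoref{prop:main_lemma}), $\theta$ is closed for the flat connection. In the parallel frame, $d_\nabla\theta=0$ says precisely that $[\cT{\cF},\cT{\cG}]\subseteq\cT{\cG}$, i.e.\ the flows of the $v_i$ preserve $\pi^{-1}\cG$ --- visibly false in the $\mathrm{SL}_2(\CC)$ example, where $[f,h]=2f\notin\cT{\cG}$. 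Once this is known, no path lifting is needed: on $\tX$ the components $\theta_i$ of $\theta$ in the frame $(v_i)$ are closed holomorphic $1$-forms, hence $\theta_i=\dd g_i$ for a holomorphic map $g=(g_1,\dots,g_p)\colon\tX\to\CC^p$; since $\theta_i(v_j)=\delta_{ij}$ and $\theta_i$ vanishes on $\cT{\pi^{-1}\cG}$, one gets $g(t\cdot x)=g(x)+t$ and $\ker\dd g=\cT{\pi^{-1}\cG}$, so that $x\mapsto\bigl(g(x),(-g(x))\cdot x\bigr)$ is a global inverse to your map $F$, with $\Y=g^{-1}(0)$ connected because $\tX$ is. So your steps building the commuting complete frame are exactly the right preparation, but the Ehresmann/covering claim must be replaced by this (or an equivalent) argument in which the K\"ahler identities do the real work.
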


\begin{proof}
This is argued in the proof of \cite[Theorem A]{Pereiratouzet2013}.
\end{proof}

\begin{lemma}[\ref{I:hermitian flat} $\implies$ \ref{I:closure}]
If $\cT{\cF}$ is hermitian flat, then the leaf closures are induced by abelian group actions as in part  \ref{I:closure} of \autoref{thm:num_flat_main}.
\end{lemma}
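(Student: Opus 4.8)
The plan is to reduce the statement to the structure of minimal invariant subspaces already obtained in \autoref{L:minimal_invariant_hermitian_flat}, by realizing $\overline{\bL}$ as the closure of a single orbit of an abelian complex Lie group. The starting point is the splitting of the universal cover from part \ref{I:Beauville}: writing $\tX \cong \CC^p \times \Y$ with $\pi_1(\X)$ acting so as to preserve the product decomposition, the foliation lifts to the horizontal foliation by the slices $\CC^p \times \{y\}$. Fixing a lift $\CC^p \times \{y_0\}$ of $\bL$, a short argument shows that $\overline{\bL}$ is the image in $\X$ of $\CC^p \times Z$, where $Z \subseteq \Y$ is the analytic closure of the $\pi_1(\X)$-orbit of $y_0$; here one uses that the deck transformations act on each slice by complex affine isometries (a translation composed with the unitary holonomy $\rho$), so that the first coordinate sweeps out all of $\CC^p$ and only the orbit in the $\Y$-factor is constrained.

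The heart of the matter is to upgrade this description to an abelian group action. I would mimic Case~1 of \autoref{L:minimal_invariant_hermitian_flat}: the translations of the $\CC^p$-factor are flat sections of $\cT{\cF}$, and I would argue that the corresponding flows assemble into an action on $\overline{\bL}$ of a connected abelian complex Lie group $\G_{\bL}$, defined as the identity component of the closure, inside $\Aut{\overline{\bL}}$, of the group generated by these leaf-translations. Commutativity is the crucial point and should follow from Lieberman's theorem \cite{Lieberman78}: any bracket of two such vector fields again lies in the zero-free tangent sheaf of $\cF$ yet induces the trivial flow on the Albanese torus, forcing it to vanish, exactly as in the trivial-tangent-bundle case. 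By construction $\bL$ is then a dense orbit of a (not necessarily closed) subgroup $\mathsf{H} \subseteq \G_{\bL}$, namely the image of the one-parameter subgroups tangent to $\cF$.

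With the abelian group in hand, I would identify $\overline{\bL}$ with an orbit closure $\overline{\G_{\bL}\cdot x}$ for $x \in \bL$. Since $\G_{\bL}$ is a connected abelian complex Lie group acting on the compact space $\overline{\bL}$ with a dense orbit, the stabilizer structure together with the Lieberman--Sommese analysis \cite{Lieberman78,Sommese73} should present $\overline{\bL}$ as a finite quotient of an equivariant compactification of $\G_{\bL}$, with $\cF|_{\overline{\bL}}$ induced by $\mathsf{H}$, which is precisely the conclusion of part \ref{I:closure}. When $\overline{\bL}$ contains no rational curves the compactification is trivial and one recovers the torus-quotient picture of \autoref{L:minimal_invariant_hermitian_flat}; in the uniruled setting the compactification divisors, which are the lower-dimensional orbits, genuinely appear.

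The main obstacle, which I would treat with care, is the passage from the translations on the universal cover---which do not descend to genuine vector fields on $\X$ when the holonomy $\rho$ is infinite---to an honest abelian Lie group acting on $\overline{\bL}$, together with the control of its closure in $\Aut{\overline{\bL}}$ and the possible singularities of $\overline{\bL}$, whose singular locus is $\cF$-invariant but need not be empty. Establishing commutativity and the equivariant-compactification structure in this possibly singular, possibly uniruled situation is where the real work lies; the remaining verifications---that $\bL$ is a dense orbit, that the foliation is the one induced by $\mathsf{H}$, and that the covering is finite étale---are then formal, and follow the lines of \cite{Pereiratouzet2013}.
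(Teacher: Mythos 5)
There is a genuine gap here, and you have in fact located it yourself: the passage from translations on $\tX \cong \CC^p\times\Y$ to an honest group action on $\overline{\bL}$ is not a technical point that can be deferred as ``where the real work lies''---it is the entire content of the lemma, and your proposal offers no mechanism for it. Concretely, the preimage of $\overline{\bL}$ in $\tX$ is $\CC^p\times Z$ with $Z$ invariant under the deck group, and a direct computation with the affine action $\gamma\cdot(z,y)=(\rho(\gamma)z+t(\gamma),\gamma\cdot y)$ shows that a translation $T_a$ descends to the quotient if and only if $a$ is fixed by the unitary monodromy of the relevant stabilizer subgroup. When that monodromy has no nonzero fixed vector---precisely the hard case your proposal must handle---not a single translation descends, so ``the group generated by the leaf-translations inside $\Aut{\overline{\bL}}$'' is simply undefined, and there is nothing to take the closure of. Note also that commutativity is not really the issue: descended translations would commute automatically, since their lifts do; and the Case-1-style bracket argument you invoke cannot be run on $\overline{\bL}$ anyway, because in \autoref{L:minimal_invariant_hermitian_flat} it relied on vector fields tangent to $\cF$ being nowhere vanishing (via the absence of proper $\cF$-invariant subspaces), which fails for leaf closures---the lower-dimensional boundary strata are invariant---and because $\overline{\bL}$ may be singular, where Lieberman's results \cite{Lieberman78} do not directly apply.

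The paper supplies exactly the missing mechanism, and it is not the universal-cover splitting of part \ref{I:Beauville} but the Shafarevich-morphism analysis from the proof of \autoref{L:minimal_invariant_hermitian_flat}. There one shows (Case 3, using \cite{CCE15} and the theorem of Kobayashi \cite{Kobayashi98} that a variety of general type is not covered by entire curves) that every leaf of $\cF$ is contained in a fibre of the $\rho_1$-Shafarevich morphism, and (via the Case 4 induction, which produces torsion flat summands and finite \'etale covers) that on such a fibre the monodromy becomes finite after a finite \'etale cover, so that $\cF$ restricted to the fibre really is defined by an analytic action of $\CC^{\dim\cF}$. Only then does $\overline{\bL}$ inherit a locally free $\CC^{\dim\cF}$-action with an analytically dense orbit, and $\G_{\bL}$ is taken to be the analytic closure of the image of this action in $\Aut{\overline{\bL}}_0$. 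Your endgame---closure of the flows in the automorphism group, dense orbit, equivariant compactification---agrees with the paper's, but without the Shafarevich step the argument never gets off the ground.
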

\begin{proof}
In the proof of \autoref{L:minimal_invariant_hermitian_flat}, we saw that the leaves of $\cF$ are contained in the fibers of the Shafarevich morphism. Furthermore, the restriction of $\cF$ to a fibre of the Shafarevich morphism is defined by an analytic action of $\mathbb{C}^{\dim \cF}$. Therefore, the analytic closure $\overline{\bL}$ of any leaf $\bL$ admits a locally free $\mathbb{C}^{\dim \cF}$-action (i.e. the stabilizer of any point is discrete) with an analytically dense orbit.
To conclude, it suffices to take $\G_{\bL}$ equal to the analytic closure of the subgroup of $\Aut{\overline \bL}_0$ determined by the locally free $\mathbb{C}^{\dim \cF}$-action defining $\cF_{|\overline \bL}$.
\end{proof}

At this point, we have shown that all statements in \autoref{thm:num_flat_main} follow from the first.  It thus remains to establish the following:

\begin{lemma}[\ref{I:hermitian flat} holds] \label{L:numerically_flat_versus_hermitian_flat} If $\cT{\cF}$ is numerically flat, then it is hermitian flat.
\end{lemma}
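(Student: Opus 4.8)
The plan is to leverage the Demailly--Peternell--Schneider structure theorem (\autoref{thm:numerically_flat}): since $\cT{\cF}$ is numerically flat it admits a filtration $\{0\}=\cE_0\subsetneq\cdots\subsetneq\cE_m=\cT{\cF}$ by subbundles with hermitian flat graded pieces, and in particular $\cT{\cF}$ is flat, corresponding to a representation $\rho\colon\pi_1(\X)\to\textup{GL}(p,\CC)$ whose associated graded is unitary. Being hermitian flat is equivalent to $\rho$ being conjugate into a unitary group, equivalently to $\cT{\cF}$ being polystable, since a polystable bundle with vanishing Chern classes on a compact K\"ahler manifold is hermitian flat by the Kobayashi--Hitchin correspondence. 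Thus the whole problem reduces to showing that the filtration splits, i.e.\ that the unipotent part of the monodromy is trivial.

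It is essential to use the involutivity of $\cT{\cF}$ here, since for a general numerically flat bundle the statement is false: the Atiyah bundle on an elliptic curve is numerically flat but has strictly unipotent, hence non-unitary, monodromy. To exploit integrability I would first reduce, exactly as in \autoref{L:minimal_invariant_hermitian_flat}, to the case where $\X$ carries no proper $\cF$-invariant analytic subspace; this is legitimate because numerical flatness is preserved under restriction to submanifolds. I would then run the Shafarevich/entire-curve argument of that lemma. Flat sections of the bottom piece $\cE_1$ --- the fixed subspace of the unipotent radical, which is genuinely monodromy-invariant and unitary --- lift to holomorphic vector fields of bounded norm on $\tX$, hence complete, whose orbits sweep out entire curves. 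Since the image of the induced representation into the reductive quotient $\G/\textup{Rad}(\G)$ is bounded, its Shafarevich variety is of general type and cannot be covered by entire curves; this forces that image to be finite, so that after a finite \'etale cover the Zariski closure $\G$ of $\rho(\pi_1(\X))$ is connected solvable.

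It then remains to remove the unipotent part. Here I would combine two ingredients. First, the Lie bracket produces an $\cO{\X}$-linear ``second fundamental form'' $\wedge^2\cE_1\to\cT{\cF}/\cE_1$, and more generally comparison maps between successive quotients; flatness of the bundles involved together with semistability of slope zero constrains these maps sharply. Second, I would invoke Lieberman's structure theory of the automorphism group, as in Cases~1 and~2 of \autoref{L:minimal_invariant_hermitian_flat}, to show that in the absence of proper invariant subspaces the complete flat vector fields tangent to $\cF$ generate an abelian group action with analytically dense orbit, exhibiting $\X$ as a torus quotient on which $\cF$ is linear; after a further cover $\cT{\cF}$ is then trivial, in particular hermitian flat. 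Every minimal invariant subspace being of this form, \autoref{ex:torus-quotient-kahler} and \autoref{prop:main_lemma} propagate hermitian flatness back to $\X$ (and the pseudoeffective/non-pseudoeffective dichotomy for $\omega_\X$ via \autoref{P:c1=0+psef} may help organize the two regimes).

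The main obstacle is precisely this unipotent step: upgrading ``unitary associated graded'' to ``unitary monodromy.'' The entire-curve argument eliminates the reductive part but says nothing directly about Jordan blocks, and a flat vector field valued in a higher piece of the filtration need not have bounded norm, so the completeness that drove the bottom-piece argument is no longer automatic. Making completeness and the resulting group action work uniformly across the filtration, or extracting enough from the $\cO{\X}$-linear bracket maps to force each extension to split, is where the real difficulty lies.
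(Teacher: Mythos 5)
Your proposal assembles most of the paper's ingredients (the DPS filtration, the Selberg reduction, the Shafarevich/entire-curve argument, Lieberman's theory), but it stalls exactly where you say it does: you never show that the monodromy has no unipotent part, and you explicitly flag this as unresolved. That is a genuine gap, not a detail. Moreover, the route you sketch for closing it does not work as stated. The reduction ``assume $\X$ has no proper $\cF$-invariant subspace'' is not legitimate for this statement: numerical flatness restricts to invariant submanifolds, but hermitian flatness of $\cT{\cF}|_\Y$ does not propagate back to hermitian flatness of $\cT{\cF}$ on $\X$. The tools you invoke for the propagation (\autoref{ex:torus-quotient-kahler}, \autoref{lem:invariant-subspace-kahler}, \autoref{prop:main_lemma}) only yield that $\cF$ is cohomologically K\"ahler, that $\cT{\X}$ splits as $\cT{\cF}\oplus\cT{\cG}$, and that $\det\cT{\cF}$ is torsion; none of this rules out unipotent Jordan blocks in the monodromy of $\cT{\cF}$ itself --- indeed the Atiyah-bundle example you cite has trivial determinant, so determinant-torsion and ambient-splitting conclusions cannot by themselves force unitarity.

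The paper closes the gap with a different organizing idea that never confronts the unipotent part head-on. Take $\cT{1}\subseteq\cT{\cF}$ a hermitian flat \emph{subbundle} of maximal rank, which has positive rank by \autoref{thm:numerically_flat}. The case analysis you sketched (trivial or finite reductive quotient, handled via Lieberman and \cite[Proposition 1.16]{DPS94}; infinite reductive quotient, handled via the Shafarevich map, entire curves, and induction on dimension) is used in the paper only to prove that $\cT{1}$ is \emph{involutive}. Once $\cT{1}$ is involutive, \autoref{C:existence_complementary_hermitian_flat} applied to the foliation it defines produces a foliation $\cG$ with $\cT{\X}\cong\cT{1}\oplus\cT{\cG}$, and intersecting with $\cF$ gives a holomorphic splitting $\cT{\cF}\cong\cT{1}\oplus\cT{\cF\cap\cG}$. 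The summand $\cT{\cF\cap\cG}$ is again numerically flat, so by \autoref{thm:numerically_flat} it contains a hermitian flat subbundle of positive rank; its direct sum with $\cT{1}$ is hermitian flat of strictly larger rank, contradicting maximality unless $\cT{1}=\cT{\cF}$. In other words, the Hodge-theoretic splitting of the ambient tangent bundle is what splits the filtration for you --- one never needs to prove directly that the extensions between the flat graded pieces are trivial. That maximality-plus-splitting mechanism is precisely the idea missing from your proposal.
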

\begin{proof}Let $\cT{1} \subseteq \cT{\cF}$ be an hermitian flat subbundle of maximal rank $p_1$. By Theorem \ref{thm:numerically_flat}, we have $p_1 \ge 1$.  We must show that $\cT{1}=\cT{\cF}$.  We do so by treating several cases, similar to the proof of \autoref{L:minimal_invariant_hermitian_flat},  as follows.
    \medskip

    \textbf{Case 1: $\cT{1}$ is involutive.} In this case, by Corollary \ref{C:existence_complementary_hermitian_flat} applied to $\cT{1}$, there exists a foliation $\cG$ such that $\cT{\X}\cong \cT{1}\oplus \cT{\cG}$. Then $\cF \cap \cG \subseteq \cF$ gives a splitting $\cT\cF \cong \cT{1} \oplus \cT{ \cF \cap \cG}$. But then $\cF \cap \cG$ is numerically flat as well and applying Theorem \ref{thm:numerically_flat} to $\cT{ \cF \cap \cG}$ contradicts the maximality of $\rank \cT{1}$ unless $\cT{1}=\cT{\cF}$.

    \textbf{Case 2: $\cT{1}\cong \cO{\X}^{\oplus p_1}$ is trivial.}  We claim that $\coH[0]{\X,{\cT{1}}}\subseteq \coH[0]{\X,\cT{\X}}$ is an abelian Lie subalgebra. Indeed, let $v_i \in \coH[0]{\X,{\cT{1}}}$ for $i\in\{1,2\}$. Then $[v_1,v_2]$ induces the zero flow on the Albanese torus $\A$ of $\X$. Thus, by \cite[Theorem 3.14]{Lieberman78}, the vector field $[v_1,v_2]$ has at least one zero. On the other hand, $[v_1,v_2] \in \coH[0]{\X,\cT{\cF}}$ since $\cF$ is a foliation.

    Applying \cite[Proposition 1.16]{DPS94}, we see that $[v_1,v_2]=0$, proving our claim. In particular, $\cT{1}$ is involutive, and the conclusion follows from Step 1.

\medskip

    \textbf{Intermezzo:} Suppose now that the vector bundle $\cT{1}$ is given by an aribtrary unitary representation
        $$\rho\colon \pi_1(X) \to \U(p_1).$$
    Let $\G \subseteq \textup{GL}(p_1,\mathbb{C})$ be the Zariski closure of $\rho(\pi_1(X))$.
    As before, after passing to an appropriate finite \'etale cover of $\X$, we may assume without loss of generality that $\G$ is connected, and that the image of the induced representation
        $$\rho_1 \colon \pi_1(X) \to \G \to \G/\textup{Rad}(\G)$$
    is torsion free.

    \medskip

    \textbf{Case 3: $\rho_1(\pi_1(X))$ is finite.} Then $\rho_1(\pi_1(X))$ is the trivial group since it is torsion free by construction. It follows that $\G$ is solvable. Since $\rho$ is unitary, we deduce that $\rho$ splits as a direct sum of rank one representations. By  \autoref{C:existence_complementary_hermitian_flat}, we deduce that $\rho(\pi_1(X))$ is finite as well. The conclusion now follows from Case 2 after passing to an \'etale cover.

    \medskip

    \textbf{Case 4:  $\rho_1(\pi_1(X))$ is infinite.} Let
    $$f\colon \X \to \Z$$
    be the $\rho_1$-Shafarevich map. By \cite[Th\'eor\`eme 1]{CCE15} again, we may assume without loss of generality that $\Z$ is a (positive dimensional) normal projective variety of general type. Arguing as in the end of the proof of  \autoref{L:minimal_invariant_hermitian_flat}, we see that $\cT{1}$ is tangent to the fibers of $f$. By induction on $\dim \X$, we conclude that $\cT{1}$ is involutive. The conclusion now follows from Case 1, proving the lemma.
\end{proof}

\begin{remarkx}
  One can naturally  wonder if the tangent bundle of a foliation satisfying the hypotheses of \autoref{thm:num_flat_main} becomes analytically trivial after some suitable finite \'etale cover. It turns out that the answer is negative in general. An example of a rank two foliation such that the above representation $$\rho\colon \pi_1(\X) \to \U(2)$$ has infinite image is given in \cite[Section 4]{Pereiratouzet2013}.
\end{remarkx}

The following consequence of \autoref{lem:invariant-subspace-kahler}, \autoref{prop:main_lemma}, \autoref{P:c1=0+psef} and \autoref{thm:num_flat_main} will be useful below and may be of independent interest.

\begin{proposition}\label{existence_complementary_hermitian_flat_2}
Let $\X$ be a compact K\"{a}hler manifold, and let $\cF$ be a possibly singular foliation on $\X$ with $c_1(\cT{\cF})=0$. Suppose that there exists an $\cF$-invariant compact submanifold $\Y$ entirely contained in the regular locus of $\cF$ such that $\cT{\cF}|_\Y$ is numerically flat.
Then $\cF$ is regular, $\omega_\cF$ is a torsion line bundle, and there exists a foliation $\cG$ on $\X$ such that $\cT{\X} = \cT{\cF} \oplus \cT{\cG}$.
\end{proposition}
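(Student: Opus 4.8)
The plan is to deduce the statement from \autoref{prop:main_lemma} by verifying that $\cF$ is cohomologically K\"ahler, and the strategy for the latter is to establish this property first on the invariant submanifold $\Y$ and then transfer it to all of $\X$. Since $\cF$ satisfies $c_1(\cT{\cF}) = 0$, and \autoref{prop:main_lemma} asserts exactly the three desired conclusions --- regularity of $\cF$, existence of a complementary foliation $\cG$ with $\cT{\X} = \cT{\cF} \oplus \cT{\cG}$, and torsion of $\det\cT{\cF}$ (equivalently of $\omega_\cF$) --- for any weakly cohomologically K\"ahler foliation with vanishing first Chern class, all of the content is concentrated in producing the cohomologically K\"ahler condition.

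First I would analyze the foliation restricted to $\Y$. Being a compact complex submanifold of the compact K\"ahler manifold $\X$, $\Y$ is itself compact K\"ahler. Because $\Y$ is $\cF$-invariant and entirely contained in the regular locus of $\cF$, the restriction $\cT{\cF}|_\Y$ is a subbundle of $\cT{\Y}$ and agrees with the tangent bundle of the regular foliation $\cF|_\Y$; by hypothesis this bundle is numerically flat. Applying \autoref{thm:num_flat_main} to the pair $(\Y,\cF|_\Y)$, item~\ref{I:coh-kah} tells us that $\cF|_\Y$ is cohomologically K\"ahler.

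Next I would transfer this upward. Since $\Y$ is compact, hence closed, and $\cF$-invariant, and $\cF|_\Y$ is cohomologically K\"ahler, \autoref{lem:invariant-subspace-kahler} shows that $\cF$ itself is cohomologically K\"ahler, in particular weakly so. Combined with $c_1(\cT{\cF}) = 0$, \autoref{prop:main_lemma} then yields all three conclusions simultaneously, completing the argument.

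The proof is thus a short composition of results already in hand, and I do not expect a genuinely hard step. The point requiring the most care is the matching of hypotheses under restriction: one must confirm that $\cT{\cF}|_\Y = \cT{\cF|_\Y}$ and $\omega_\cF|_\Y \cong \omega_{\cF|_\Y}$ --- valid precisely because $\Y$ is invariant and lies in the regular locus --- so that both \autoref{thm:num_flat_main} and \autoref{lem:invariant-subspace-kahler} genuinely apply. I would also remark that when $\omega_\X$ happens to be pseudoeffective one can sidestep $\Y$ entirely and read off the cohomologically K\"ahler property directly from \autoref{P:c1=0+psef}; passing through $\Y$ has the merit of working uniformly, with the deep structural input \autoref{thm:num_flat_main} doing the real work.
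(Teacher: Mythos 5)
Your proposal is correct and matches the paper's intended argument: the paper states this proposition without a written-out proof, presenting it precisely as a consequence of \autoref{thm:num_flat_main} (applied to $\cF|_\Y$ to get the cohomologically K\"ahler property), \autoref{lem:invariant-subspace-kahler} (to transfer it to $\cF$ on $\X$), and \autoref{prop:main_lemma} (to conclude regularity, torsion of $\omega_\cF$, and the splitting), which is exactly the chain you assemble. Your closing remark about \autoref{P:c1=0+psef} also accounts for the one remaining reference the paper lists, so nothing is missing.
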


\section{Foliations of dimension two with numerically trivial canonical bundle}
\label{sec:dim2-CY}

In this section, we describe the structure of regular foliations with zero first Chern class and dimension at most two on complex projective manifolds (see Theorem \ref{theorem:classification_small_rank}).

Notions of singularities coming from the minimal model program have been shown to be very useful when studying (birational) geometry of foliations. We refer the reader to \cite[Section I]{McQuillan08} for their precise definition. The proof of  \autoref{theorem:classification_small_rank} below makes use of the following result, which might be of independent interest.

\begin{theorem}\label{abundance_foliation_by_cuvres}
Let $\X$ be a compact K\"{a}hler manifold and let $\mathcal{L}$ be a foliation by curves on $\X$ such that $c_1(\omega_{\mathcal{L}})=0$. If $\mathcal{L}$ has log canonical singularities, then $\omega_{\mathcal{L}}$ is a torsion line bundle.
\end{theorem}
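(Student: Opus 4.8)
The plan is to reduce the statement to showing that the numerically trivial line bundle $\omega_{\cL}$ is a torsion point of $\mathsf{Pic}^0(\X)$, and to do so by adapting the argument of \autoref{prop:main_lemma} to the present singular setting. Since $\cL$ is a foliation by curves on the smooth manifold $\X$, its tangent sheaf $\cT{\cL}$ is a saturated rank-one subsheaf of $\cT{\X}$, hence a line bundle, and $\omega_{\cL}=\cT{\cL}^*$. The hypothesis $c_1(\omega_{\cL})=0$ places $\omega_{\cL}$ in $\mathsf{Pic}^0(\X)$, so after replacing $\X$ by a finite \'etale cover we may assume $\omega_{\cL}$ is flat and unitary, and the goal becomes finite monodromy. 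When $\cL$ is regular this is already contained in the results of \autoref{S:splitting}: a regular foliation by curves is cohomologically K\"ahler, so \autoref{prop:main_lemma} applies and gives torsion directly (note $c_1(\cT{\cL})=0$). The whole content of the theorem is therefore to push this through in the presence of the singular locus $\mathrm{Sing}(\cL)$, and this is exactly where the log canonical hypothesis will be used.

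First I would produce the cohomological input needed to run the jump-locus argument, namely a nonzero class in $\coH[1]{\X,\omega_{\cL}}\cong\coH[0]{\X,\forms[1]{\X}\otimes\omega_{\cL}^*}$ (Hodge symmetry for the unitary local system $\omega_{\cL}$). On the regular locus $\X^\circ=\X\setminus\mathrm{Sing}(\cL)$ one has the usual directed current built from a K\"ahler form restricted along the leaves, as in the proof of \autoref{lemma:invariant_measure}; the task is to extend it across $\mathrm{Sing}(\cL)$, which has codimension $\ge 2$. Here the log canonical condition is essential: it guarantees that $\cL$ is non-dicritical along the components of its singular set, so that no spurious positive mass is created along $\mathrm{Sing}(\cL)$ and the extension (by Harvey's theorem \cite{Harvey74}, combined with the cohomology comparison of \cite{Banica74}) remains strongly directed. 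This yields that $\cL$ is cohomologically K\"ahler, and hence, as in \autoref{lem:nonzero-contraction}, a distinguished nonzero class $[\hook{v}\gamma]\in\coH[1]{\X,\omega_{\cL}}$ together with its conjugate twisted $1$-form.

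With this in hand I would reproduce the Green--Lazarsfeld/Simpson--Wang argument of \autoref{prop:main_lemma}. Let $\Sigma\subseteq\mathsf{Pic}^0(\X)$ be an irreducible component through $[\omega_{\cL}]$ of the jump locus $\{[\mathcal M]:\coh[1]{\X,\mathcal M}\ge m\}$, which by \cite{Simpson93,Wang16} is a translate of a subtorus by a torsion point. Local freeness of $\mathsf{R}^1\pi_*$ of the Poincar\'e bundle over a neighbourhood of $[\omega_{\cL}]$ lets me deform the distinguished class into a holomorphic family, and Hodge symmetry converts it into a family of twisted $1$-forms with coefficients in the nearby bundles $\mathcal M^*$. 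Wedging each such $1$-form with a twisted $(n-1)$-form defining $\cL$ produces a section of $\omega_{\X}\otimes\det\cN{\cL}\otimes\mathcal M^*\cong\omega_{\cL}\otimes\mathcal M^*$, and the point is that this section is nonzero for $[\mathcal M]$ near $[\omega_{\cL}]$, forcing $\mathcal M\cong\omega_{\cL}$ and hence $[\omega_{\cL}]$ isolated in $\Sigma$; torsion follows.

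I expect the main obstacle to be precisely the interaction of these forms with $\mathrm{Sing}(\cL)$: unlike in the regular case of \autoref{prop:main_lemma}, there is no everywhere-transverse complementary foliation, so the nonvanishing of the wedge product, and the very construction of the directed current, must be controlled near the singular points. This is where I would spend the most effort, using the classification of log canonical singularities of foliations by curves (see \cite{McQuillan08}) to verify that the relevant contractions remain generically nondegenerate and that no positive mass escapes along $\mathrm{Sing}(\cL)$. A complementary route, which I would keep in reserve, is to first run a birational/foliated reduction to a model carrying only canonical singularities --- along which torsion of $\omega_{\cL}$ is preserved --- and then apply the above argument on the simplified model.
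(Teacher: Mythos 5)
There is a genuine gap, and it lies at the heart of your strategy. Your plan is to show that the singular foliation $\cL$ is (weakly) cohomologically K\"ahler and then run the jump-locus argument of \autoref{prop:main_lemma}. But by the paper's own \autoref{lem:nonzero-contraction} and \autoref{prop:main_lemma}(1), a foliation with $c_1(\cT{\cL})=0$ that is weakly cohomologically K\"ahler is automatically \emph{regular}: the defining $p$-vector is nonvanishing. Hence whenever $\mathrm{Sing}(\cL)\neq\emptyset$ --- which is precisely the case the theorem is about, the regular case being already covered --- the property you are trying to establish provably fails, and no amount of care with currents can rescue it. Your supporting steps also do not hold up: the codimension hypothesis of \autoref{lemma:invariant_measure} for a foliation by curves ($q=n-1$) demands $\codim \Si\geq n+1$, i.e.\ $\Si=\emptyset$, so it cannot absorb a nonempty singular set; the closed strongly directed current you invoke is constructed in \autoref{P:psefnew} from a psh metric on $\det\cN{\cL}^*$, whose pseudoeffectivity is not among the hypotheses here; and the claim that log canonical singularities are non-dicritical is false (the radial vector field $x\partial_x+y\partial_y$ has non-nilpotent linear part, hence is log canonical, yet is dicritical).

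The paper's proof takes an entirely different route that is worth internalizing, because it shows how the log canonical hypothesis actually enters. One reduces via the Albanese morphism $a\colon \X\to\A$: if $q(\X)=0$ the statement is immediate, and otherwise $\cL$ is tangent to the fibers of $\X\to a(\X)$. If the singular locus $\Z$ does not dominate $a(\X)$, a general fiber is $\cL$-invariant with $\cL$ regular along it, and \autoref{existence_complementary_hermitian_flat_2} concludes. If $\Z$ dominates $a(\X)$, one picks a dominating component $\Z_1$ and studies the endomorphism $\Lambda$ of $\forms[1]{\X}|_{\Z_1}$ with values in $\omega_{\cL}|_{\Z_1}$ given by the linear part of a local generator of $\cL$ along $\Z_1$. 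Log canonicity enters exactly here: by McQuillan's Facts I.1.8--I.1.9 it means $\Lambda$ is nowhere nilpotent, so some elementary symmetric function gives a nowhere-zero section of $\omega_{\cL}^{\otimes k}|_{\Z_1}$, whence $\omega_{\cL}|_{\Z_1}$ is torsion; since $\omega_{\cL}^{\otimes m}$ descends from $\Pic{\A}$ and $\Z_1$ dominates $a(\X)$, torsion on $\Z_1$ propagates to torsion on $\X$. In short, the singular locus is not an obstacle to be extended across, as in your proposal, but the very place where the section trivializing (a power of) $\omega_{\cL}$ is produced.
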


\begin{proof}
Let $a\colon \X \to \A$ be the Albanese morphism, and set $\Y:=a(\X)$. If $q(\X)=0$, any line bundle whose first Chern class is zero in $\coH[2]{\X,\QQ}$ is torsion. We may thus assume $q(\X)=\dim \A>0$, and that $\mathcal{L}$ is tangent to the fibers of the induced map $\X \to \Y$.

Suppose first that the singular locus $\Z$ of $\mathcal{L}$ maps onto a proper subset of $\Y$. Then a general fiber $\F$ of the map $\X \to \Y$ is $\mathcal{L}$-invariant and $\mathcal{L}|_{\F} \subseteq \cT{\F}$ is regular.  \autoref{existence_complementary_hermitian_flat_2} then implies that
$\omega_{\mathcal{L}}$ is a torsion line bundle.

Suppose from now on that $\Z$ maps onto $\Y$, and let $\Z_1$ be any irreducible component of $\Z$ mapping onto $\Y$.
There exists a positive integer $m$ such that $$\omega_{\mathcal{L}}^{\otimes m} \in a^*\textup{Pic}(\A).$$ Hence, it suffices to show that $\omega_{\mathcal{L}}|_{\Z_1}$ is a torsion line bundle. We now argue as in \cite[Section 4.1]{BM16}. By the very definition of $\Z$, the natural map
$\cT{\mathcal{L}} \to  \cT{\X}$ gives a surjective map $$\Omega_\X^1 \twoheadrightarrow \mathcal{I}_{\Z/\X}\otimes\omega_{\mathcal{L}}.$$
Next, consider the composition
\begin{multline*}
\Lambda\colon \Omega_\X^1|_{\Z_1} \to (\mathcal{I}_{\Z/\X}\otimes\omega_{\mathcal L})|_{\Z_1} \to
(\mathcal{I}_{\Z_1/\X}\otimes\omega_{\mathcal L})|_{\Z_1}\\ \cong (\mathcal{I}_{\Z_1/\X}/\mathcal{I}_{\Z_1/\X}^2)\otimes\omega_{\mathcal L}|_{\Z_1} \to \Omega_\X^1|_{\Z_1} \otimes\omega_{\mathcal L}|_{\Z_1}.
\end{multline*}
If $z \in \Z_1$ is any point and $v$ is a local generator of $\mathcal{L}$ on some open neighborhood of $z$, then the induced map
$$\Lambda|_z\colon \Omega_\X^1|_z \to \Omega_\X^1|_z \otimes\omega_{\mathcal L}|_z \cong \Omega_\X^1|_z$$
is the linear part of $v$ at $z$.
By \cite[Facts I.1.8 and I.1.9]{McQuillan08}, the endomorphism $\Lambda|_z$ of $\Omega_\X^1|_z$ is not nilpotent since $\mathcal{L}$ has log canonical singularities by assumption. It follows that there exists $k \in \{1,\ldots,\dim \X\}$ such that one of the $k^\textup{th}$ elementary symmetric function
$s_k \in \coH[0]{\Z_1, { ( { \omega_{\mathcal L}} |_{\Z_1})}^{\otimes k}}$
of $\Lambda \in {\textup{End}}_{\mathcal{O}_{\Z_1}}(\Omega_{\X}^1|_{\Z_1})\otimes \omega_{\mathcal L}|_{\Z_1}$
does not vanish at $z$ and in particular $s_k$ is a nonzero section of $\omega_{\mathcal{L}}|_{\Z_1}^{\otimes k}$.  Since $c_1(\omega_{\mathcal{L}})=0$, this implies that $\omega_{\mathcal{L}}|_{\Z_1}$ is torsion, as desired.
\end{proof}

\begin{theorem}\label{theorem:classification_small_rank}
Let $\X$ be a complex projective manifold, and let $\cF$ be a regular foliation of dimension $p \in\{1,2\}$ with $c_1(\cT{\cF})=0$. Then either $\cF$ is algebraically integrable, or $\cT{\cF}$ is numerically flat. Moreover, $\omega_\cF$ is a torsion line bundle, and there exists a regular foliation $\cG$ on $\X$ such that $\cT{\X} = \cT{\cF} \oplus \cT{\cG}$.
\end{theorem}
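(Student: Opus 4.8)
The plan is to route everything through \autoref{thm:num_flat_main} whenever $\cT{\cF}$ turns out to be numerically flat, and through an isotriviality argument otherwise; the pivot will be the second Chern class of $\cT{\cF}$. Two preliminary observations organize the proof. First, for $p=1$ there is nothing transcendental to do: $\cT{\cF}$ is a line bundle with $c_1(\cT{\cF})=0$, hence numerically flat by \autoref{rem:num_flat_zero_first_Chern_class}, so the second alternative of the dichotomy always holds and \autoref{thm:num_flat_main} already supplies the torsion of $\omega_\cF$ and the complement $\cG$. Second, I would record that $\cT{\cF}$ is $\mu$-semistable with respect to every movable class (in particular $[H^{n-1}]$ for ample $H$): a destabilizing subsheaf would saturate to a subfoliation by curves $\cL\subseteq\cF$ of positive degree on a general complete intersection curve, which by the Bogomolov--McQuillan/Campana--P\u{a}un criterion (cf.\ \cite{BM16}) would be algebraically integrable with rational leaves. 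But the $\cL$-leaves are moving rational curves $R$ contained in the surface leaves $\bL$ of $\cF$, and $c_1(\cT{\bL})\cdot R=c_1(\cT{\cF})\cdot R=0$, whereas a moving rational curve satisfies $\deg(\cT{\bL}|_R)=2+\deg N_{R/\bL}\ge 2$, a contradiction. Thus $\cT{\cF}$ is everywhere movable-semistable with $c_1(\cT{\cF})=0$.

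With semistability in hand, the dichotomy is governed by $c_2(\cT{\cF})$. If $c_2(\cT{\cF})=0$, then the Bogomolov discriminant of $\cT{\cF}$ vanishes, so its Jordan--H\"older factors are stable with vanishing first and second Chern classes; by the Kobayashi--Hitchin correspondence these factors are hermitian flat, and an iterated extension of hermitian flat bundles is numerically flat by \autoref{thm:numerically_flat}. Hence $\cT{\cF}$ is numerically flat and \autoref{thm:num_flat_main} closes this branch.

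The remaining, and genuinely hard, case is $c_2(\cT{\cF})\neq 0$, where I must prove $\cF$ is algebraically integrable. Here I would first note that $\X$ is uniruled: otherwise $\omega_\X$ is pseudoeffective by \cite{BDPP} and \cite{Pereiratouzet2013} would force $\cT{\cF}$ to be numerically flat, contradicting $c_2\neq 0$. Since the leaves have numerically trivial canonical class, the computation above shows they contain no moving rational curves, so $\cF$ is transverse to the rational curves of a covering family. The plan is to exploit this transversality, together with the classification of the admissible compact leaf types (a surface with $c_1=0$ and nonzero topological Euler number is, up to finite \'etale cover, a projective K3 surface), to conclude that the general leaf is in fact a compact projective surface, i.e.\ that $\cF$ is algebraically integrable. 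This compactness/algebraicity step is the main obstacle and the one place where projectivity is indispensable (as anticipated in the remark following \autoref{thm:second_main}); I expect it to require the minimal model program for foliations in the spirit of \cite{DruelPoisson,DruelCodimensiontwo}, possibly via \autoref{abundance_foliation_by_cuvres} applied to a subfoliation by curves.

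Finally, once $\cF$ is algebraically integrable with compact general leaf $\bL$, the remaining assertions follow formally from the machinery of \autoref{S:splitting}. Indeed, $\bL$ is a closed $\cF$-invariant submanifold on which $\cF$ restricts to its full tangent foliation, which is cohomologically K\"ahler by \autoref{ex:extreme-cohK}; hence $\cF$ itself is cohomologically K\"ahler by \autoref{lem:invariant-subspace-kahler}. Since $c_1(\cT{\cF})=0$, \autoref{prop:main_lemma} then yields simultaneously that $\omega_\cF$ is a torsion line bundle and that there is a regular foliation $\cG$ with $\cT{\X}=\cT{\cF}\oplus\cT{\cG}$, completing the proof in both branches.
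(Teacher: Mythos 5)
Your argument collapses exactly where the theorem is hard: the branch $c_2(\cT{\cF})\neq 0$, where you must produce algebraic integrability, is not proved but only announced (``I expect it to require the minimal model program for foliations\ldots''). Worse, the one concrete assertion launching that branch is false. You claim that if $\X$ were not uniruled, then $\omega_\X$ would be pseudo-effective by \cite{BDPP} and \cite{Pereiratouzet2013} would force $\cT{\cF}$ to be numerically flat. What \cite{Pereiratouzet2013} extracts from ``non-uniruled and $c_1(\cT{\cF})=0$'' alone is smoothness and polystability of $\cT{\cF}$ (as recalled in the paper's introduction); numerical flatness is not a consequence. Counterexample: let $\X=\Si\times\A$ with $\Si$ a projective K3 surface and $\A$ an abelian variety, and let $\cF$ be the foliation by the K3 fibers of the projection to $\A$. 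Then $\cF$ is regular of dimension two, $c_1(\cT{\cF})=0$, $c_2(\cT{\cF})\neq 0$ (so $\cT{\cF}$ is not numerically flat, by \cite[Corollary 1.19]{DPS94}), $\omega_\X$ is trivial, and $\X$ is not uniruled. This is precisely the algebraically integrable alternative appearing in \autoref{thm:second_main}, and it shows that in the case $c_2\neq 0$ the manifold $\X$ need not be uniruled and may contain no rational curves whatsoever; your plan of ``exploiting transversality to the rational curves of a covering family'' therefore cannot even get started.

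For comparison, the paper's dichotomy is not $c_2=0$ versus $c_2\neq 0$ but pseudo-effectivity of the tautological class $\xi$ on $\PP(\forms[1]{\cF})$: if $\xi$ is not pseudo-effective, algebraic integrability is immediate from \cite[Proposition 8.4]{Druel18} (this is the \emph{only} source of the algebraically integrable alternative); if $\xi$ is pseudo-effective, then \cite[Theorem 4.7]{CP19} gives semistability and Nakayama's classification \cite[Theorem IV.4.8]{Nakayama04} of semistable rank-two bundles with $c_1=0$ reduces everything to the delicate case of a saturated line subbundle $\cT{\mathcal{L}}\subset\cT{\cF}$ with $c_1(\cT{\mathcal{L}})=0$, which is eliminated via \autoref{abundance_foliation_by_cuvres}, Lieberman's theory of automorphism groups, minimal $\cF$-invariant subspaces, and induction on $\dim\X$ --- always concluding numerical flatness. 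In particular, your implication ``$c_2(\cT{\cF})\neq 0\Rightarrow\cF$ algebraically integrable'' is a \emph{consequence} of the theorem (it amounts to showing $c_2\neq 0$ forces $\xi$ non-pseudo-effective), not an available shortcut to it. Your $p=1$ reduction, your semistability observation (which can be cleaned up by citing \cite[Corollary 3.8]{Loray2018} for non-uniruledness of regular foliations with $c_1=0$), your $c_2=0$ branch, and your final deduction of the torsion and splitting statements via \autoref{ex:extreme-cohK}, \autoref{lem:invariant-subspace-kahler} and \autoref{prop:main_lemma} are all sound and parallel to the paper's use of \cite[Theorem 5.6]{Loray2018} and \autoref{thm:num_flat_main}; but as written the theorem remains unproven.
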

\begin{proof}
The second assertion follows from \cite[Theorem 5.6]{Loray2018} if $\cF$ is algebraically integrable and from \autoref{thm:num_flat_main} if $\cT{\cF}$ is numerically flat, so we need only prove the  first assertion. If $p=1$, $\cT{\cF}$ is numerically flat by assumption. We may thus assume that $p=2$.

Let $\xi$ be the tautological class on $\mathbb{P}(\forms[1]{\cF})$.  If $\xi$ is not pseudo-effective, then $\cF$ is algebraically integrable by \cite[Proposition 8.4]{Druel18}, so suppose from now on that $\xi$ is pseudo-effective.

Let $\mathsf{H}$ be an ample divisor on $\X$. From \cite[Theorem 4.7]{CP19}, we deduce that $\cT{\cF}$ is $\mathsf{H}$-semistable. Applying \cite[Theorem IV.4.8]{Nakayama04}, we see that one of the following holds.
\begin{enumerate}
\item The tangent bundle $\cT{\cF}$ is numerically flat.
\item There exist line bundles $\mathcal{L}$ and $\mathcal{M}$ with $c_1(\mathcal{L})=-c_1(\mathcal{M})$ and slopes $\mu_\mathsf{H}(\mathcal{L})=\mu_\mathsf{H}(\mathcal{M})=0$ such that $\cT{\cF}\cong \mathcal{L}\oplus \mathcal{M}$.
\item There exist a finite \'etale cover $f \colon \X' \to \X$ of degree $2$ and a line bundle $\mathcal{L}$ on $\X'$ such that $\cT{\cF}\cong f_* \mathcal{L}$.
\item There exists a foliation $\mathcal{L}$ by curves, defined by a saturated line bundle $\cT{\mathcal{L}} \subset \cT{\cF}$ with $c_1(\cT{\mathcal{L}})=0$.
\end{enumerate}
Hence it suffices to treat cases 2 through 4, which we do as follows.

\textbf{Case 2:} In his case, \cite[Theorem 1.2]{CP19} applies to show that $\mathcal{L}^*$ and $\mathcal{M}^*$ are pseudo-effective line bundles. This immediately implies $c_1(\mathcal{L})=c_1(\mathcal{M})=0$ since $c_1(\mathcal{L})=-c_1(\mathcal{M})$ by our current assumption.
As a consequence, $\cT{\cF}$ is numerically flat.

\textbf{Case 3:} Let $\tau$ be the involution of the covering $f\colon \X' \to \X$. By
\cite[Lemma 4.11]{Nakayama04}, we have $f^*\cT{\cF}\cong \mathcal{L}\oplus \tau^*\mathcal{L}$.
Note that $f^{-1}\cF$ is a regular foliation on $\X'$ with $\cT{f^{-1}\cF}=f^*\cT{\cF}$. In particular,
$c_1(\cT{f^{-1}\cF})=0$. Then $\mathcal{L}$ is pseudo-effective by \cite[Theorem 1.2]{CP19}, and hence
$c_1(\mathcal{L})=c_1(\tau^*\mathcal{L})=0$ since $c_1(\cT{f^{-1}\cF})=0$.  This implies that $\cT{\cF}$ is numerically flat in this case as well.

\textbf{Case 4:} This one is the most involved and occupies the rest of the proof.  In this case, $\cF$ has canonical singularities
by \cite[Lemma 3.10]{fano_fols}. By \cite[Corollary 3.8]{Loray2018}, $\cF$ is not uniruled. It follows that $\mathcal{L}$ is not uniruled as well, and hence it has canonical singularities using \cite[Corollary 3.8]{Loray2018} again. Therefore $\omega_{\mathcal{L}}$ is a torsion line bundle by   \autoref{abundance_foliation_by_cuvres}. Hence, replacing $\X$ by a finite \'etale cover if necessary, we may assume without loss of generality that there is a nonzero global vector field $v \in \coH[0]{\X,\cT{\cF}}$ such that $\cT{\mathcal{L}}=\mathcal{O}_\X \, v$.

If $v$ is nowhere vanishing, i.e $\mathcal L$ is a regular foliation, then there exists a foliation $\cG$ on $\X$ everywhere transverse to $\mathcal L$ by \cite[Theorem 3.14]{Lieberman78}. The foliation $\cG$
gives a splitting $\cT\cF=\cT{ \mathcal L}\oplus \cT{\cF\cap \cG}$, and hence $\cT\cF$ is numerically flat.

Let $\Y \subseteq \X$ be a minimal $\cF$-invariant analytic subspace, which is automatically smooth. Either $v|_\Y$ is nowhere vanishing, nonzero but vanishing, or identically zero.

Suppose first that $v|_\Y$ is nowhere vanishing.  Then \autoref{existence_complementary_hermitian_flat_2} applied to $\mathcal{L}$
 implies that $\mathcal{L}$ is regular, so that $\cT{\cF}$ is numerically flat.

Suppose next that $v|_\Y \neq 0$ but $v(y)=0$ for some point $y \in \Y$. Let $\G$ be the Zariski closure in $\Aut{\Y}_0$ of the complex Lie group exponentiating $v|_\Y$. Notice that $\G$ is a commutative linear algebraic group by \cite[Theorems 3.12 and 3.14]{Lieberman78} and that $\G$ preserves ${\cF}|_\Y$. Then $\G$ fixes $y$, and thus the leaf $\bL$ of
${\cF}|_\Y$ though $y$ is also $\G$-invariant. Let $\mathfrak{g}\subseteq \coH[0]{\Y,\cT{\Y}} $ be the Lie algebra of holomorphic vector fields arising from the infinitesimal action of $\G$. The set $\Z=\{u\in \Y\,|\,\mathfrak g (u)\subseteq { \mathcal T}_u \cF\subset   { \mathcal T}_u  {\Y}\} \ni y$ is a closed algebraic subvariety of $\Y$ saturated by $\cF$. By minimality of $\Y$, we have $\Z=\Y$, and hence ${\cF}|_\Y$ is uniruled. But this contradicts \cite[Theorem 3.6]{Loray2018}.

Suppose finally that $v|_\Y = 0$. Notice that we must have $\dim \Y < \dim \X$. By induction on $\dim \X$, we can assume that
either $\cF|_\Y$ is algebraically integrable or $\cT{\cF|_\Y}$ is numerically flat.
By \cite[Theorem 5.6]{Loray2018} if $\cF|_\Y$ is algebraically integrable and  \autoref{existence_complementary_hermitian_flat_2} if $\cT{\cF|_\Y}$ is numerically flat,
$\omega_\cF$ is a torsion line bundle, and there exists a foliation $\cG$ on $\X$ everywhere transverse to $\cF$. Therefore, replacing $\X$ by a further \'etale cover, if necessary, we may assume without loss of generality that $\omega_\cF \cong \cO{X}$. Hence, there is a $2$-form $\Omega$ on $\X$ that restricts to a symplectic form on the leaves of $\cF$. In particular, the global $1$-form $\alpha:=\hook{v}\Omega$ is nonzero.
Let $a\colon \X \to \A$ be the Albanese morphism, and set $\T:=a(\X)$. Notice that $\dim \T >0$ since $\alpha\neq 0$. Moreover, $v$ must be tangent to the fibers of $\X \to \T$ since the vanishing locus $\Z\supseteq \Y$ of $v$ is nonempty by our current assumption. Suppose that $\Z$ maps onto a proper subset of $\T$. Then a general fiber $\F$ of the map $\X \to \T$ is $\mathcal{L}$-invariant and $\mathcal{L}|_{\F} \subseteq \cT{\F}$ is regular.  \autoref{existence_complementary_hermitian_flat_2} then implies that $v$ is nowhere vanishing, a contradiction.
Therefore, $\Z$ maps onto $\T$. Now, $\alpha|_{Z_{\textup{reg}}}$ vanishes identically by its very definition.
On the other hand, $\alpha|_{Z_{\textup{reg}}}$ is the pull-back of a nonzero global $1$-form on $\A$. This immediately implies $\alpha=0$, yielding a contradiction.
This completes the proof of the theorem.
\end{proof}

We may now finish the proof of our second main result (\autoref{thm:second_main} from the introduction), which describes the structure of regular foliations of dimension two with $c_1=0$ on projective manifolds.

\begin{proof}[Proof of Theorem \ref{thm:second_main}]
    According to \autoref{theorem:classification_small_rank}, $\omega_{\cF}$ is torsion, there exists a foliation $\cG$ such that $\cT{\X} = \cT{\cF} \oplus \cT{\cG}$, and  $\cT{\cF}$ is numerically flat or $\cF$ is algebraically integrable. If $\cT{\cF}$ is numerically flat, then it is hermitian flat by \autoref{thm:num_flat_main}. If $\cF$ is algebraically integrable then the existence of a finite \'etale covering trivializing $\cF$ follows from \cite[Theorem 1.4 and Remark 3.17]{globalweinstein}.
\end{proof}

\section{Consequences for Poisson geometry}
\label{sec:poisson}
The results and conjectures in this paper have some interesting consequences for compact K\"ahler Poisson manifolds, i.e.~pairs $(\X,\pi)$ of a compact K\"ahler manifold $\X$ and a holomorphic Poisson bivector $\pi \in \coH[0]{\X,\wedge^2\cT{\X}}$.  For such manifolds, the image of the ``anchor map'' $\pi^\sharp : \forms[1]{\X}\to\cT{\X}$ gives an involutive subsheaf, whose integral submanifolds are the symplectic leaves of $(\X,\pi)$.  However, this subsheaf need not be saturated, so it may not define a foliation in the stronger sense of the present paper.  Rather, we obtain a foliation $\cF$ in the present sense by taking the saturation of $\img \pi^\sharp\subset \cT{\X}$.  This may enlarge the leaves away from the locus where the rank of $\pi$ is constant.

Let $r$ be the generic rank of $\pi$.  (The rank is always even.)  Let us assume that the locus where $\pi$ has rank less than $r$  has codimension at least two.  Then $\det \cT{\cF}$ is trivialized by $\pi^{r/2}$, and hence $c_1(\cF)=0$, so our results and conjectures on foliations with numerically trivial canonical bundle apply.

\subsection{Submanifolds and subcalibrations}

We recall the definition and basic properties of subcalibrations, due to Frejlich--M\u{a}rcu\c{t}; see \cite[\S2]{globalweinstein} for details.  A \emph{subcalibration} of a holomorphic Poisson manifold $(\X,\pi)$  is a global closed holomorphic two-form $\sigma \in \coH[0]{\X,\forms[2]{\X}}$ such that the operator $\theta=\pi^\sharp\sigma^\flat \in \End{\cT{\X}}$ is idempotent, i.e.~$\theta^2=\theta$.  Here  $\sigma^\flat : \cT{\X}\to\forms[1]{\X}$ and $\pi^\sharp : \forms[1]{\X} \to \cT{\X}$ are the natural maps defined by contraction.  The image and kernel of $\theta$ then define complementary smooth foliations $\cF$ and $\cG$, respectively, so that $\cT{\X} = \cT{\cF}\oplus\cT{\cG}$. The Poisson structure then splits as $\pi  = \pi_\cF + \pi_\cG$ where $\pi_\cF \in \coH[0]{\X,\wedge^2 \cT{\cF}}$ and $\pi_\cG \in \coH[0]{\X,\wedge^2 \cT{\cG}}$ are Poisson structures on $\cF$ and $\cG$, with $\pi_\cF$ nondegenerate.

Certain Poisson submanifolds can be used to construct subcalibrations, as follows. Let $\Y \subseteq \X$ be a closed holomorphic Poisson submanifold, i.e.~a closed complex submanifold to which $\pi$ is tangent. Assume that $\pi|_\Y$ is regular.  Such regular Poisson submanifolds always exist: for instance, if $\Y \subseteq \X$ is a Poisson subvariety that is minimal with respect to inclusions, then $\Y$ is automatically smooth and $\pi|_\Y$ is automatically regular, because the degeneracy and singular loci of any Poisson variety are Poisson subvarieties~\cite[\S2]{Polishchuk1997}.

\begin{definition}
A subcalibration $\sigma$ of $(\X,\pi)$ is \emph{compatible with the Poisson submanifold $\Y$} if $\img \theta|_\Y = \img \pi^\sharp|_\Y$, i.e.~the foliation $\cF$ of $\X$ induced by the image of $\theta$ restricts to the symplectic foliation on $\Y$.
\end{definition}

The following is a generalization of \cite[Corollary 2.5]{globalweinstein}, which treated the case in which $\Y$ is a symplectic leaf.  Note that if \autoref{conj:regular-foliation-cK} is true, then the assumption that $\Y$ is cohomologically K\"ahler can be dropped.
\begin{proposition}\label{prop:subcal}
Let $\Y\subseteq \X$ be a regular Poisson submanifold. If $\X$ and $\Y$ are connected and the symplectic foliation of $\Y$ is cohomologically K\"ahler, then $(\X,\pi)$ admits a subcalibration that is compatible with $\Y$.
\end{proposition}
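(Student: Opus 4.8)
The plan is to construct the subcalibration explicitly by Hodge-conjugating a class built from the K\"ahler form and then contracting against a power of $\pi$, using the cohomological K\"ahler hypothesis precisely to produce the needed nondegeneracy along $\Y$. Write $d\ge 0$ for the (even) rank of $\pi|_\Y$, let $\cF_\Y$ denote the symplectic foliation of $\Y$, and let $\varpi\in\coH[0]{\Y,\wedge^2\cT{\cF_\Y}^*}$ be its leafwise symplectic form; then $(\pi|_\Y)^{d/2}$ and $\varpi^{d/2}$ are dual trivialisations of $\det\cT{\cF_\Y}$ and $\omega_{\cF_\Y}$, so in particular $c_1(\cT{\cF_\Y})=0$. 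If $d=0$ the zero form is a subcalibration, so assume $d\ge 2$. Since every holomorphic two-form on the compact K\"ahler manifold $\X$ is automatically closed, producing a subcalibration compatible with $\Y$ amounts to producing a global holomorphic two-form $\sigma\in\coH[0]{\X,\forms[2]{\X}}$ whose operator $\theta=\pi^\sharp\sigma^\flat$ is idempotent and satisfies $\img\theta|_\Y=\img\pi^\sharp|_\Y$.

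For the construction, I would fix a K\"ahler class $\gamma$ on $\X$ and contract the powers of the global $d$-vector $\pi^{d/2}\in\coH[0]{\X,\wedge^d\cT{\X}}$ with $\gamma^d$ to obtain a class $\hook{\pi^{d/2}}\gamma^d\in\coH[d]{\X,\cO{\X}}$. This class is nonzero: its restriction to $\Y$ corresponds, under the trivialisation $\omega_{\cF_\Y}\cong\cO{\Y}$, to $\hook{v}(\gamma|_\Y)^d\in\coH[d]{\Y,\omega_{\cF_\Y}}$ (where $v$ is the $d$-vector defining $\cF_\Y$), which is nonzero because $\cF_\Y$ is cohomologically K\"ahler. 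Taking the Hodge-conjugate yields a nonzero holomorphic $d$-form $\alpha\in\coH[0]{\X,\forms[d]{\X}}$, and I would set
\[
\sigma:=\hook{\pi^{d/2-1}}\alpha\in\coH[0]{\X,\forms[2]{\X}}.
\]
Applying \autoref{lem:nonzero-contraction} to $\cF_\Y$ shows that $\hook{v}(\alpha|_\Y)$ is a nonvanishing constant, i.e.\ $\alpha|_\Y$ restricts on $\cF_\Y$ to a nonzero multiple of $\varpi^{d/2}$; contracting with $(\pi|_\Y)^{d/2-1}$ then shows $\sigma|_{\cF_\Y}$ is a nonzero constant multiple of $\varpi$. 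After rescaling $\sigma$ by this constant we have $\sigma|_{\cF_\Y}=\varpi$, and since $\pi^\sharp$ inverts $\varpi$ on $\cT{\cF_\Y}$, a pointwise computation gives $\img\theta|_\Y=\img\pi^\sharp|_\Y=\cT{\cF_\Y}$; this is compatibility with $\Y$.

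The main obstacle is idempotency of $\theta$ on all of $\X$. Along $\Y$ it is already in hand: because $\pi$ is tangent to $\Y$, one has $\img\pi^\sharp|_\Y=\cT{\cF_\Y}$, so $\theta$ carries $\cT{\X}|_\Y$ into $\cT{\cF_\Y}$ and acts there as the identity, forcing $\theta^2|_\Y=\theta|_\Y$. To propagate this, I would use that the coefficients of the characteristic polynomial of $\theta$ are global holomorphic functions on the compact connected manifold $\X$, hence constant; thus the eigenvalues of $\theta$ are globally $0$ (multiplicity $\dim\X-d$) and $1$ (multiplicity $d$), so idempotency is equivalent to the vanishing of the nilpotent part $\theta^2-\theta$, equivalently to $\rank\theta\equiv d$. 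Since $\theta^2-\theta$ is a global holomorphic section of $\mathcal{E}nd(\cT{\X})$, it suffices by the identity theorem to prove $\theta^2=\theta$ on a nonempty open set.

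This last step — upgrading the pointwise calibration identity from the proper subvariety $\Y$ to an open neighbourhood — is the delicate point, and is exactly where I would follow and adapt the argument of \cite[Corollary 2.5]{globalweinstein}. The idea is to combine the regularity of $\pi|_\Y$ with the constancy of the eigenvalues of $\theta$ to show that the condition ``$\sigma$ calibrates $\pi$'' is open and closed, so that its validity on $\Y$ forces it in a neighbourhood; once $\theta^2=\theta$ holds on an open set it holds everywhere, and $\sigma$ is the desired subcalibration compatible with $\Y$. I expect this propagation, rather than the construction of $\sigma$ or the verification along $\Y$, to be the crux of the proof.
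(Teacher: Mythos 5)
Your construction of the two-form and its verification along $\Y$ track the paper's proof exactly: the class $\hook{\pi^{d/2}}\gamma^{d}$, its Hodge conjugate $\alpha$, the rescaling via \autoref{lem:nonzero-contraction}, and the contraction $\sigma=\hook{\pi^{d/2-1}}\alpha$ are precisely the form denoted $\sigma_0$ in the paper, and your computation that $\theta=\pi^\sharp\sigma^\flat$ is a projection onto $\cT{\cF_\Y}$ at points of $\Y$ is correct. The genuine gap is the final step, which you yourself flag as the crux and then do not prove. Your mechanism---establish $\theta^2=\theta$ on a nonempty open set and conclude by the identity theorem---cannot be run: $\Y$ is a proper analytic subvariety with empty interior, and the locus $\{\theta^2=\theta\}$ is a closed analytic subset with no reason to be open, so knowing the identity along $\Y$ propagates it to no neighbourhood whatsoever; ``open and closed'' is not a valid mechanism here. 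Worse, for your specific $\sigma$ global idempotency may simply be false: constancy of the characteristic polynomial only pins down the semisimple part of $\theta$, and nothing in your construction controls the nilpotent part of $\theta$ away from $\Y$. Your claimed equivalence ``idempotency $\Leftrightarrow\rank\theta\equiv d$'' is also incorrect: an endomorphism with a unipotent Jordan block on the generalized $1$-eigenspace has characteristic polynomial $t^{n-d}(t-1)^{d}$ (with $n=\dim\X$) and rank $d$, but is not idempotent.

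This is exactly why the paper never claims that $\sigma_0$ itself is a subcalibration. Instead it applies \cite[Lemma 2.4]{globalweinstein} to a single symplectic leaf $\bL\subseteq\Y$: that lemma takes the closed form $\sigma_0$, which restricts to the symplectic form on $\bL$, and returns a possibly \emph{different} closed form $\sigma$ that is an honest subcalibration and still restricts to the symplectic form on $\bL$---i.e.\ the resolution is a genuine modification of the form, not an analytic continuation of the identity $\theta^2=\theta$. Compatibility with all of $\Y$ (not just $\bL$) is then recovered by a rank argument: since $\theta$ is idempotent, its rank equals its trace, a global holomorphic function, hence is constant on the connected manifold $\X$ and equal to $d$; therefore $\img\theta|_\Y\subseteq\img\pi^\sharp|_\Y$ are subbundles of $\cT{\Y}$ of the same rank $d$ and so coincide. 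The missing idea in your proposal is precisely the content of that lemma; deferring to ``adapt the argument of \cite[Corollary 2.5]{globalweinstein}'' with the open-and-closed heuristic leaves the central step of the proposition unproven.
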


\begin{proof}In the special case where $\pi|_\Y$ has corank zero, i.e.~$\Y$ is a symplectic leaf, this is \cite[Corollary 2.5]{globalweinstein}. (In that case, the foliation is trivially cohomologically K\"ahler.)  The argument here is similar, with some minor changes in the details to get around the fact that $\Y$ may have many leaves, and they need not be compact.

Suppose that the rank of $\pi|_\Y$ is $2k$, and let $\cF_\Y$ be its symplectic foliation.  Let $\kahcl \in \coH[1]{\X,\forms[1]{\X}}$ be a K\"ahler class, and consider the element $\hook{\pi^k}\kahcl^{2k} \in \coH[2k]{\X,\cO{\X}}$.  Its conjugate under Hodge symmetry is a holomorphic $2k$-form $
\mu \in \coH[0]{\X,\forms[2k]{\X}}$.  Since $\pi^k|_\Y$ is a $2k$-vector defining $\cF_\Y$, it follows from \autoref{lem:nonzero-contraction} that the element $\hook{\pi^k}\mu|_\Y \in \coH[0]{\Y,\cO{\Y}}$ is a nonzero constant.  Hence by rescaling $\mu$ if necessary, we may assume that $\mu$ restricts to the leafwise Liouville volume form on every symplectic leaf of $\Y$.  Then $\sigma_0 := \tfrac{1}{(k-1)!}\hook{\pi^{k-1}}\mu \in \coH[0]{\X,\forms[2]{\X}}$ is a global holomorphic two-form that restricts to the symplectic form on every symplectic leaf of $\Y$.

Applying \cite[Lemma 2.4]{globalweinstein} to any symplectic leaf $\bL\subseteq \Y\subseteq \X$, we deduce the existence of a subcalibration $\sigma$ of $\pi$ that restricts to the symplectic form on $\bL$.  Since $\X$ is connected, the rank of the associated endomorphism $\theta = \pi^\sharp\sigma^\flat$ is constant, and must therefore be equal to $2k$.  But then $\img\theta|_\Y\subseteq \img\pi^\sharp|_\Y$ are subbundles of $\cT{\Y}$ of the same rank, and hence they must be equal, so that the subcalibration is compatible with $\Y$, as desired.
\end{proof}

\subsection{Global Weinstein splitting}

Now suppose $(\X,\pi)$ is a compact K\"ahler Poisson manifold, and let $d$ be the minimal dimension of a symplectic leaf of $\pi$, i.e.~we have $d = \min_{x \in \X} (\rank \pi(x))$.  Assuming \autoref{conj:regular-foliation-cK} that every regular foliation is cohomologically K\"ahler, we may apply \autoref{prop:subcal} to any  Poisson submanifold on which $\pi$ has rank $d$ and deduce the existence of a subcalibration of $(\X,\pi)$ that gives a Poisson splitting of the tangent bundle  $\cT{\X} = \cT{\cF} \oplus \cT{\cG}$ where $\cF$ is a symplectic foliation of dimension $d$, and $\cG$ is a Poisson foliation.  Combining this with Beauville's conjecture, we  obtain a splitting of the universal cover of $\X$ as a product of Poisson manifolds.  In other words, \autoref{conj:regular-foliation-cK}, together with Beauville's conjecture, imply \autoref{conj:weinstein} from the introduction, whose statement we now recall:
\medskip

\noindent\textbf{\autoref{conj:weinstein}.}\emph{
\PoissonSplittingConjecture}

\medskip

If $d = 0$, or $d = \dim \X$, the conjecture is trivially satisfied: simply take $\Z$ or $\Y$ to be $\X$, and the other to be a point. If $d=\dim \X - 1$, then $\pi$ is automatically regular since the rank is even; the conjecture then follows from  the classification in \cite{DruelPoisson,CalabiYauTouzet}. The conjecture also holds when $\X$ has a compact leaf with finite fundamental group as shown by the main result of our paper~\cite{globalweinstein}.

As a direct consequence of the results presented in this paper, we provide further evidence supporting this conjecture.
\begin{proposition}\label{prop:weinstein}
\autoref{conj:weinstein} holds if $d=2$ and $\X$ is projective.
\end{proposition}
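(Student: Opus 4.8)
The plan is to use a minimal Poisson submanifold to split off a two-dimensional symplectic factor via a subcalibration, to describe that factor through our classification of rank-two foliations, and finally to promote the resulting splitting of the tangent bundle to a splitting of the Poisson bivector.

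First I would produce the Poisson submanifold. Let $\Y\subseteq\X$ be an irreducible Poisson subvariety that is minimal with respect to inclusion among those contained in the closed Poisson locus where $\rank\pi\le 2$; as recalled before \autoref{prop:subcal} (see \cite[\S2]{Polishchuk1997}) it is automatically smooth with $\pi|_\Y$ regular. Since $d=2$ is the minimal leaf dimension there are no points of rank $0$, so $\pi|_\Y$ has constant rank exactly $2$; in particular $\Y$ is projective and connected, and its symplectic foliation $\cF_\Y$ is regular of dimension two with $c_1(\cT{\cF_\Y})=0$, being trivialised by $\pi|_\Y$. By \autoref{theorem:classification_small_rank} there is a foliation complementary to $\cF_\Y$, so $\cF_\Y$ is cohomologically K\"ahler by \autoref{lem:direct-sum-cK}. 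Now \autoref{prop:subcal} supplies a subcalibration $\sigma$ of $(\X,\pi)$ compatible with $\Y$; the associated idempotent $\theta=\pi^\sharp\sigma^\flat$ has rank $\rank\pi|_\Y=2$ along $\Y$, hence constant rank $2$ since $\X$ is connected. This yields a Poisson splitting $\cT{\X}=\cT{\cF}\oplus\cT{\cG}$ with $\cF$ a regular two-dimensional symplectic foliation ($\pi_\cF$ nondegenerate, $c_1(\cT{\cF})=0$), $\cG$ a Poisson foliation, and $\pi=\pi_\cF+\pi_\cG$.

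Next I would determine the structure of $\cF$ from \autoref{thm:second_main}: either $\cT{\cF}$ is hermitian flat, or a finite \'etale cover of $\X$ is a product $\bL\times\W$ on which $\cF$ is the fibration over $\W$ with fibres the surface $\bL$, where $c_1(\bL)=0$. In both situations $\cF$ is induced by a splitting of the universal cover. In the flat case this is part~\ref{I:Beauville} of \autoref{thm:num_flat_main}, giving $\tX\cong\CC^2\times\Z$ with $\cF$ lifting to the $\CC^2$-factor. In the isotrivial case one passes to the universal cover of $\bL\times\W$, noting that the symplectic fibre $\bL$ has trivial canonical bundle and hence is an abelian surface (with universal cover $\CC^2$) or a K3 surface. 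Either way $\tX\cong\Y_0\times\Z$ for a holomorphic symplectic surface $\Y_0$ of dimension $d=2$, and the tangent splitting lifts to $\cT{\tX}\cong\cT{\Y_0}\boxplus\cT{\Z}$, so the pulled-back bivector reads $\tilde\pi=\tilde\pi_\cF+\tilde\pi_\cG$ with $\tilde\pi_\cF$ a section of $\wedge^2\cT{\Y_0}$ and $\tilde\pi_\cG$ a section of $\wedge^2\cT{\Z}$.

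The remaining and hardest step is to show that $\tilde\pi$ is an honest product, i.e.\ that $\tilde\pi_\cF$ and $\tilde\pi_\cG$ are pulled back from $\Y_0$ and $\Z$; the universal covering $\Y_0\times\Z\to\X$ is then the desired Poisson covering. The issue is the transversal variation of the leafwise symplectic form $\tilde\pi_\cF$. In the isotrivial case the leaves are abelian or K3 surfaces, where $h^{2,0}=1$ forces the symplectic form to be rigid, so on each leaf it agrees with a fixed one up to a scalar $\lambda$; a compactness argument on $\X$ makes $\lambda$ constant, and in the K3 subcase one may instead finish directly, since $\Y$ is then a compact simply connected symplectic leaf and the main theorem of \cite{globalweinstein} applies. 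The genuinely delicate case is the flat one: here the leaf closures are torus quotients carrying a linear foliation (parts~\ref{I:minimal} and~\ref{I:closure} of \autoref{thm:num_flat_main}), the symplectic leaves may be dense and non-compact, and the compact-leaf theorem is unavailable; one must instead exploit the rigidity of the translation-invariant symplectic form on these torus closures, together with the compactness of $\X$, to rule out any transversal variation. A companion argument, using $[\pi,\pi]=0$ and the involutivity of $\cG$, shows that $\tilde\pi_\cG$ likewise descends to $\Z$, completing the splitting.
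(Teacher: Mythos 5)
Your first two paragraphs are, in substance, the paper's entire proof: the paper likewise takes a regular rank-two Poisson submanifold, deduces that its symplectic foliation is cohomologically K\"ahler from \autoref{thm:second_main} (you use \autoref{theorem:classification_small_rank}, which is equivalent for this purpose and matches how the paper argues in \autoref{prop:poisson-vanishing}) together with \autoref{lem:direct-sum-cK}, applies \autoref{prop:subcal} to get a subcalibration with $\cF$ symplectic of dimension two, and then invokes \autoref{thm:second_main} again. The paper compresses everything after that into ``we obtain the desired splitting'', and you are right that this final passage has genuine content: a priori $\pi_\cF$ could vary along the directions of $\cG$ not lying in the image of $\pi_\cG^\sharp$, so the product structure of the bivector on the cover is not formal. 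Your treatment of the isotrivial case is correct in outline (rigidity from $h^{2,0}(\bL)=1$ plus constancy of a holomorphic function on the compact base, or the compact-leaf theorem of \cite{globalweinstein} in the K3 subcase).

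The gap is the hermitian flat case, which you flag as ``genuinely delicate'' and then do not prove: ``one must exploit the rigidity of the translation-invariant symplectic form on these torus closures\dots to rule out any transversal variation'' is a statement of the goal, not an argument. Leaf closures are also the wrong tool: they control nothing transverse to themselves when they are proper subvarieties, and by part~\ref{I:closure} of \autoref{thm:num_flat_main} they are quotients of equivariant compactifications of abelian groups, not necessarily torus quotients (that is part~\ref{I:minimal}, about minimal invariant subvarieties). The missing idea is much simpler and stays inside the paper's framework: in this case $\wedge^2\cT{\cF}\cong\omega_\cF^*$ is a hermitian flat line bundle and $\pi_\cF$ is a nowhere-vanishing holomorphic section of it, so $|\pi_\cF|^2_h$ (for the flat metric $h$) is plurisubharmonic, hence constant on the compact manifold $\X$, hence $\pi_\cF$ is parallel; pulling back to the universal cover $\tX\cong\CC^2\times\Z$ of part~\ref{I:Beauville} of \autoref{thm:num_flat_main}, whose splitting is by construction compatible with the flat structure on $\cT{\cF}$, the section $\tilde{\pi}_\cF$ becomes a constant multiple of the standard bivector on the $\CC^2$ factor, with no analysis of leaf closures or density needed. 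Finally, your ``companion argument'' for $\tilde{\pi}_\cG$ is indeed easy, but the operative input is the nondegeneracy of $\pi_\cF$ along $\cF$, not the involutivity of $\cG$: in coordinates adapted to $\cT{\X}=\cT{\cF}\oplus\cT{\cG}$, the component of $[\pi,\pi]=0$ in $\cT{\cF}\otimes\wedge^2\cT{\cG}$ says that the coefficients of $\pi_\cG$ are killed by the image of $\pi_\cF^\sharp$, which is all of $\cT{\cF}$; hence $\pi_\cG$ is basic and descends to $\Z$.
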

\begin{proof}
By \autoref{thm:second_main} and \autoref{lem:direct-sum-cK}, the symplectic foliation of any regular Poisson submanifold of rank two is cohomologically K\"ahler.  Applying \autoref{prop:subcal}, we obtain a subcalibration of $(\X,\pi)$ for which the foliation $\cF$ is symplectic of dimension two.  Applying \autoref{thm:second_main} again, we obtain the desired splitting.
\end{proof}

\subsection{Non-emptiness of vanishing loci}

Note that \autoref{prop:subcal} implies, in particular, that if $\X$ admits a regular cohomologically K\"ahler Poisson submanifold of rank $2k$, then there exists a holomorphic two-form $\sigma$ on $\X$ such that $\sigma^k \neq 0$.  Hence \autoref{prop:subcal} and \autoref{conj:regular-foliation-cK} (that every regular foliation is cohomologically K\"ahler) together imply the following.

\medskip
\noindent\textbf{\autoref{conj:poisson-vanishing}.}
\emph{\PoissonVanishingConjecture}

\medskip

Note that if $\X$ is rational or Fano, or more generally if $\X$ is rationally connected, then $\coh[q,0]{\X} =0$ for all $q> 0$, so the conjecture predicts that any Poisson structure on such a manifold has a zero.  This conjecture is thus similar in spirit to Bondal's conjecture~\cite{Bondal1993} that if $(\X,\pi)$ is a Fano Poisson manifold, then for every $j < \dim\X/2$, the degeneracy locus where $\pi$ has rank at most $2j$ is non-empty, and has an irreducible component of dimension at least $2j+1$.  Bondal's conjecture is known to hold for Fano manifolds of dimension $\le 4$, as proven in \cite{Gualtieri2013,Polishchuk1997}.  In addition, \autoref{conj:poisson-vanishing} was proven for $\X = \PP^5$ by the third author in \cite[Theorem 6.8.5]{PymThesis}.  Thus \autoref{conj:poisson-vanishing} holds in all those cases, all of which are subsumed by the following.

\begin{proposition}\label{prop:poisson-vanishing}
\autoref{conj:poisson-vanishing} holds if $\X$ is projective and $\dim \X \leq 6$, or more generally, if $\X$ contains a closed projective analytic Poisson subspace $\Y\subseteq \X$ of dimension $\dim \Y\leq 6$.
\end{proposition}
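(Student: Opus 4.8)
The plan is to reduce the conjecture to the construction of a single global holomorphic two-form $\sigma$ on $\X$ satisfying $\sigma^{d/2}\neq 0$. Indeed, once such a $\sigma$ is available, the relation $\sigma^{d/2}=\sigma^{j}\wedge\sigma^{d/2-j}$ forces $\sigma^{j}\neq 0$ for every $j\le d/2$, and since $\sigma^{j}\in\coH[0]{\X,\forms[2j]{\X}}$ this gives $h^{2j,0}(\X)\neq 0$ for all $j\le\tfrac d2$, which is exactly the content of \autoref{conj:poisson-vanishing}. The case $d=0$ is vacuous, so I would assume $d>0$, in which case $\pi$ has no zeros.

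First I would replace $\Y$ by a Poisson subvariety $\Y'\subseteq\Y$ that is minimal with respect to inclusion; such a $\Y'$ exists because $\Y$ is itself a Poisson subvariety, and by the property recalled just before \autoref{prop:subcal} it is automatically smooth with $\pi|_{\Y'}$ regular, say of rank $2k$. It is moreover projective (being closed in the projective variety $\Y$) and satisfies $\dim\Y'\le\dim\Y\le 6$. Its symplectic leaves are symplectic leaves of $\X$, so $2k\ge d$, and $\det\cT{\cF_{\Y'}}$ is trivialized by $\pi^{k}|_{\Y'}$, so the symplectic foliation $\cF_{\Y'}$ is a regular foliation of dimension $2k$ with $c_1(\cT{\cF_{\Y'}})=0$. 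Granting that $\cF_{\Y'}$ is cohomologically K\"ahler, the observation following \autoref{prop:subcal} produces a holomorphic two-form $\sigma$ on $\X$ with $\sigma^{k}\neq 0$; as $k\ge d/2$ this yields $\sigma^{d/2}\neq 0$, completing the reduction. Concretely, \autoref{prop:subcal} gives a subcalibration compatible with $\Y'$, whose associated idempotent $\theta=\pi^\sharp\sigma^\flat$ has rank $2k$ and whose $\sigma$ restricts to a symplectic form on the $2k$-dimensional leaves of $\img\theta$, so $\sigma^k$ is leafwise a volume form.

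The crux is therefore to verify that $\cF_{\Y'}$ is cohomologically K\"ahler, and this is exactly where the bound $\dim\Y'\le 6$ is used; I would argue by cases on the codimension $q=\dim\Y'-2k$. If $2k=2$, then $\cF_{\Y'}$ is a rank-two regular foliation with $c_1=0$ on the projective manifold $\Y'$, so \autoref{theorem:classification_small_rank} yields a complementary foliation with $\cT{\Y'}=\cT{\cF_{\Y'}}\oplus\cT{\cG}$ and hence $\cF_{\Y'}$ is cohomologically K\"ahler by \autoref{lem:direct-sum-cK}; crucially this covers every codimension. If instead $q\le 2$, the property holds directly: by \autoref{ex:extreme-cohK} when $q=0$, by \autoref{prop:codimension-one-cK} when $q=1$ (the singular locus is empty, so its codimension hypothesis is vacuous), and by \autoref{lem:codimension-two-cK} when $q=2$ (using that $\Y'$ is projective with $c_1(\cT{\cF_{\Y'}})=0$). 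These two families exhaust all possibilities when $\dim\Y'\le 6$, since $q\ge 3$ forces $2k=\dim\Y'-q\le 3$ and hence $2k=2$. Taking $\Y=\X$ recovers the stated special case of a projective $\X$ with $\dim\X\le 6$.

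The main obstacle is the cohomological K\"ahler property in the ``thin'' regime of a rank-two leaf inside a five- or six-dimensional minimal Poisson subvariety, where the codimension is $3$ or $4$ and the elementary criteria of \autoref{prop:codimension-one-cK} and \autoref{lem:codimension-two-cK} no longer apply. The key input that dissolves this obstacle is the full classification of rank-two foliations with $c_1=0$ on projective manifolds in \autoref{theorem:classification_small_rank}, which forces a tangent-bundle splitting independently of the codimension; this is also the reason the statement is confined to the projective setting and to ambient Poisson subspaces of dimension at most six, rather than following from \autoref{conj:regular-foliation-cK} in general. The remaining ingredients, namely the existence of a minimal Poisson subvariety, the numerical closure of the case analysis below dimension seven, and the deduction of all the Hodge numbers from $\sigma^k\neq 0$, are routine.
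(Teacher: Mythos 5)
Your proof is correct and follows essentially the same route as the paper: reduce to a minimal (hence smooth, regular) projective Poisson subvariety, verify that its symplectic foliation is cohomologically K\"ahler by the same rank/corank case analysis (\autoref{theorem:classification_small_rank} with \autoref{lem:direct-sum-cK} when the rank is two, and \autoref{ex:extreme-cohK}, \autoref{prop:codimension-one-cK}, \autoref{lem:codimension-two-cK} when the corank is at most two), and then apply \autoref{prop:subcal} to produce the two-form. The only difference is that you spell out the elementary final step (deducing the nonvanishing of all the relevant Hodge numbers from $\sigma^{k}\neq 0$ with $k\ge d/2$) in more detail than the paper does.
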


\begin{proof}
Without loss of generality, we may assume that the subspace $\Y$ is smooth and that $\pi|_\Y$ is regular of rank $p$ for some $p \ge d$.  By \autoref{prop:subcal}, it suffices to show that the symplectic foliation of $\Y$ is cohomologically K\"ahler.  But either the rank $p$ of $\pi|_\Y$ or the corank $q=\dim\Y -p$ is at most two, and the symplectic foliation has trivial canonical bundle; these cases are covered by \autoref{ex:extreme-cohK} when $p=0$ or $q=0$, \autoref{theorem:classification_small_rank} and \autoref{lem:direct-sum-cK} when $p=2$, \autoref{prop:codimension-one-cK} when $q=1$, and \autoref{lem:codimension-two-cK} when $q=2$.
\end{proof}

Under the stronger assumption that $\X$ is Fano, we obtain further evidence for \autoref{conj:poisson-vanishing} (and hence also Bondal's conjecture):
\begin{proposition}\label{prop:Fano-vanishing}
Let $\X$ be a Fano manifold.  If either $\dim \X \leq 7$, or $\dim \X = 8$ and $b_2(\X) = 1$, then every Poisson structure on $\X$ has at least one zero.
\end{proposition}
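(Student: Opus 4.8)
The plan is to assume that $\pi$ is nowhere zero and derive a contradiction with the vanishing $\coh[2,0]{\X}=0$, which holds because a Fano manifold is rationally connected and hence carries no nonzero holomorphic forms. The engine is the observation recorded just before \autoref{conj:poisson-vanishing}: if $\X$ admits a regular Poisson submanifold $\Y$ with $\pi|_\Y$ of rank $2k\ge 2$ whose symplectic foliation is cohomologically K\"ahler, then \autoref{prop:subcal} produces a holomorphic two-form $\sigma$ on $\X$ with $\sigma^k\neq 0$, so that $\coh[2,0]{\X}\neq 0$. Thus it suffices to exhibit such a $\Y$. Since $\pi$ has no zero, the minimal rank $d$ of $\pi$ is at least $2$; moreover, if $\pi|_\Y=0$ on a minimal Poisson subvariety $\Y$, then every point of $\Y$ would be a zero, so any minimal Poisson subvariety $\Y\subseteq\X$ (which is automatically smooth with $\pi|_\Y$ regular) has rank $2k\ge 2$.

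Next I would run through the cohomological-K\"ahler criteria already established. Writing $2k$ for the rank and $q=\dim\Y-2k$ for the corank of the symplectic foliation $\cF_\Y$, the foliation $\cF_\Y$ is cohomologically K\"ahler whenever $2k\le 2$ (using \autoref{theorem:classification_small_rank} together with \autoref{lem:direct-sum-cK}, valid since $\Y$ is projective) or $q\le 2$ (using \autoref{ex:extreme-cohK}, \autoref{prop:codimension-one-cK} and \autoref{lem:codimension-two-cK}). When $\dim\Y\le 6$ one of these always holds, which recovers \autoref{prop:poisson-vanishing}. Hence the only configuration not yet handled is a minimal Poisson subvariety $\Y$ with rank $2k\ge 4$ and corank $q\ge 3$, forcing $\dim\Y\ge 7$; as $\dim\Y\le\dim\X\le 8$, this leaves precisely the regular rank-four Poisson manifolds of dimension $7$ (rank $4$, corank $3$) and, when $\dim\X=8$, of dimension $8$ (rank $4$, corank $4$).

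To eliminate these I would invoke the non-emptiness and dimension bounds for degeneracy loci coming from work on Bondal's conjecture \cite{Bondal1993,Gualtieri2013,Polishchuk1997}. The point is that a minimal Poisson subvariety is regular, so its degeneracy locus is empty; but for a Fano manifold a Poisson structure of generic rank $2m<\dim\X$ has a non-empty degeneracy locus $\{\rank\pi\le 2m-2\}$, so $\pi$ can never be regular of rank strictly between $0$ and $\dim\X$. Descending through the chain of Poisson degeneracy loci of $\pi$ on $\X$ itself, rather than on a possibly non-Fano subvariety, one reaches a minimal Poisson subvariety on which the rank equals the minimal rank $d$. The numerics of $\dim\X\le 7$, together with the dimension estimates for degeneracy loci and (when $\dim\X=8$) the hypothesis $b_2(\X)=1$, are exactly what should guarantee that this minimal rank is $2$ or that the resulting subvariety has corank at most $2$, placing us back in the cases already handled. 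Note also that the symplectic extreme $2k=\dim\X$ is impossible, since a nondegenerate $\pi$ would dualize to a holomorphic symplectic form and again violate $\coh[2,0]{\X}=0$; and the numerically flat alternative in \autoref{theorem:classification_small_rank} cannot occur on a Fano manifold, because \autoref{thm:num_flat_main} would split the universal cover of the simply connected manifold, producing a noncompact factor $\CC^{2k}$ and contradicting compactness.

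I expect the main obstacle to be the dimension-seven, rank-four case, and in particular its occurrence as a \emph{proper} Poisson subvariety of a Fano eightfold: there the Fano hypothesis is not inherited by $\Y$, so the non-emptiness of degeneracy loci cannot be applied to $\Y$ directly and must instead be extracted from the geometry of $\pi$ on the ambient $\X$. This is precisely where the constraint $b_2(\X)=1$ should enter, sharpening the available degeneracy-locus estimates (as in the Picard-rank-one results toward Bondal's conjecture) enough to preclude a seven-dimensional rank-four stratum and thereby force a Poisson subvariety of rank or corank at most two.
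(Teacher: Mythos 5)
Your reduction for $\dim\X\le 7$ is correct and is essentially the paper's own argument: Bott vanishing shows that a Poisson structure on a Fano manifold can never be regular, so the degeneracy locus is a nonempty proper closed Poisson subvariety, necessarily of dimension at most six, and then the machinery of \autoref{prop:poisson-vanishing} (a minimal Poisson subvariety has rank or corank at most two, hence cohomologically K\"ahler symplectic foliation, hence \autoref{prop:subcal} contradicts $\coh[2,0]{\X}=0$) finishes.

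The genuine gap is the case $\dim\X=8$, $b_2(\X)=1$, which you explicitly leave unresolved. Your proposed mechanism --- that $b_2(\X)=1$ should ``sharpen the available degeneracy-locus estimates, as in the Picard-rank-one results toward Bondal's conjecture,'' so as to preclude a seven-dimensional rank-four stratum --- has no support in the literature: the known results toward Bondal's conjecture cover Fano manifolds of dimension at most four and $\PP^5$, and nothing of that kind is available for eightfolds. So the hard configuration you correctly isolate (a smooth Poisson hypersurface $\Y\subset\X$ with $\pi|_\Y$ regular of rank four and corank three) is never actually eliminated. The paper disposes of it by a different and elementary mechanism, and this is where $b_2(\X)=1$ really enters: since $b_2(\X)=1$, the hypersurface $\Y$ is ample and its class is proportional to $c_1(\X)$, so adjunction gives $c_1(\Y)=s\,c_1(\X)|_\Y$ for some rational number $s$. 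If $s>0$, then $\Y$ is a Fano sevenfold, and the already-proved case $\dim\le 7$ applied to $(\Y,\pi|_\Y)$ produces a zero of $\pi|_\Y$, which is a zero of $\pi$. If $s\le 0$, then $\omega_\Y$ is pseudo-effective; since the symplectic foliation of the regular Poisson structure $\pi|_\Y$ has trivial canonical bundle, \autoref{P:c1=0+psef} shows it is cohomologically K\"ahler, and \autoref{prop:subcal} then yields a holomorphic two-form on $\X$ with nonzero square, contradicting $\coh[2,0]{\X}=0$. Without this ampleness-plus-adjunction dichotomy (Fano versus pseudo-effective canonical bundle), or some substitute for it, your proof of the eight-dimensional case does not go through.
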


\begin{proof}
Every Fano manifold $\X$ has $\coh[0,q]{\X}=0$ for $q>0$.  Moreover, an application of Bott's vanishing theorem as in \cite[\S9]{Polishchuk1997} or \cite[\S7.3]{Gualtieri2013} implies that a Poisson structure $\pi$ on such a manifold can never be regular.

If $\dim\X\leq 7$, then the non-regular locus of $\pi$ gives a nonempty closed analytic Poisson subspace $\Y \subset \X$ of dimension at most six, and the result follows from \autoref{prop:poisson-vanishing}.

If $\dim \X =8$, then either there is a nonempty closed Poisson subvariety of dimension at most six (in which case \autoref{prop:poisson-vanishing} applies), or $\X$ contains a smooth Poisson hypersurface $\Y\subset \X$.   Since $b_2(\X)=1$, the divisor  $\Y$ is ample and its class is a multiple of the canonical class of $\X$.  Hence the adjunction formula gives $c_1(\Y) = s \, c_1(\X)|_\Y \in \coH[2]{\Y,\mathbb{C}}$ for some rational number $s$. If $s > 0$, then $\Y$ is Fano of dimension seven and the result follows as above. On the other hand, if $s \leq 0$ then $\omega_\Y$ is pseudo-effective, and since the symplectic foliation has trivial canonical bundle, we deduce from \autoref{P:c1=0+psef} that it is cohomologically K\"ahler, so the result follows from \autoref{prop:subcal} and the vanishing $\coh[q,0]{\X}=0$ for all $q > 0$ as above.
\end{proof}

\bibliographystyle{hyperamsplain}
\bibliography{num_flat}
\end{document}